\documentclass[11pt,a4paper]{article}

\usepackage[utf8]{inputenc}
\usepackage[T1]{fontenc}
\usepackage{microtype}
\usepackage[english]{babel}
\usepackage{amsmath,amssymb,amsthm,mathtools}
\IfFileExists{newtxtext.sty}{%
  \usepackage{newtxtext,newtxmath}%
}{%
  \usepackage{mathptmx}%
}

\usepackage{enumitem}
\usepackage{geometry}
\usepackage[dvipsnames]{xcolor}
\usepackage{hyperref}
\usepackage{fancyhdr}
\definecolor{InternalLink}{HTML}{0066CC}
\definecolor{CitationLink}{HTML}{0066CC}
\definecolor{ExternalLink}{HTML}{0066CC}
\definecolor{ORCIDGreen}{HTML}{A6CE39}
\hypersetup{
  colorlinks=true,
  linkcolor=InternalLink,
  citecolor=CitationLink,
  urlcolor=ExternalLink,
  bookmarksopen=true,
  bookmarksnumbered=true,
  pdfauthor={Daniel Baratta},
  pdftitle={Overdetermined Singular Problems and Fractional Torsion},
  pdfsubject={Overdetermined singular problems and fractional torsion},
  pdfkeywords={fractional Laplacian, overdetermined problem, isolated singularity, moving planes, Green function}
}
\IfFileExists{orcidlink.sty}{%
  \usepackage{orcidlink}
  \newcommand{\AuthorORCID}{\orcidlink{0009-0003-5499-4047}}
}{%
  \IfFileExists{tikz.sty}{%
    \usepackage{tikz}
    \newcommand{\AuthorORCID}{%
      \href{https://orcid.org/0009-0003-5499-4047}{%
        \tikz[baseline=-0.55ex]
        \node[circle,fill=ORCIDGreen,inner sep=0.27ex,
        text=white,font=\sffamily\bfseries\tiny]{iD};}}
  }{%
    \newcommand{\AuthorORCID}{%
      \href{https://orcid.org/0009-0003-5499-4047}{%
        \textcolor{ORCIDGreen}{\textsf{\bfseries iD}}}}
  }
}

\fancypagestyle{plain}{%
  \fancyhf{}
  \fancyfoot[C]{\thepage}
  }

\renewenvironment{abstract}{%
  \par\noindent\rule{\textwidth}{0.4pt}\par
  \vspace{0.8ex}
  \noindent\textbf{Abstract}\par\smallskip
  \small
}{\par}

\newtheorem{theorem}{Theorem}[section]
\newtheorem{corollary}[theorem]{Corollary}
\newtheorem{proposition}[theorem]{Proposition}
\newtheorem{lemma}[theorem]{Lemma}
\newtheorem{remark}[theorem]{Remark}
\newtheorem{definition}[theorem]{Definition}
\newtheorem{assumption}[theorem]{Assumption}
\numberwithin{equation}{section}

\newcommand{\R}{\mathbb R}

\newcommand{\dd}{\,d}
\newcommand{\PV}{\operatorname{P.V.}}
\newcommand{\dist}{\operatorname{dist}}
\newcommand{\supp}{\operatorname{supp}}
\newcommand{\Lip}{\operatorname{Lip}}
\newcommand{\loc}{\mathrm{loc}}

\title{\normalfont\bfseries Overdetermined Singular Problems and Fractional Torsion}
\author{{\Large Daniel Baratta\,\AuthorORCID}%
\thanks{Corresponding author. E-mail address:
\href{mailto:daniel.baratta@unical.it}{\texttt{daniel.baratta@unical.it}}.}\\[-0.15em]
{\normalsize\itshape Dipartimento di Matematica e Informatica, UNICAL, Ponte Pietro Bucci 31B,}\\[-0.15em]
{\normalsize\itshape 87036 Arcavacata di Rende, Cosenza, Italy}}
\date{}

\begin{document}

\maketitle

\begin{abstract}
We study a Serrin-type overdetermined problem for the fractional Laplacian in
a bounded open set with an isolated non-removable interior singularity. For
positive weak solutions of
\[
\begin{cases}
(-\Delta)^s u=f(u) & \text{in }\Omega\setminus\{0\},\\
u=0 & \text{in }\R^N\setminus\Omega,\\
(\partial_\eta)^s u=c & \text{on }\partial\Omega,
\end{cases}
\]
with \(0<s<1\), \(N>2s\) and \(c<0\), we isolate a first-order tangential
cancellation of the boundary quotient \(u/\delta^s\) that matches the
first-order cancellation needed to close the fractional corner argument and is
weaker than requiring
\(u/\delta^s\in C^1\) in a full boundary neighborhood. Under this condition,
\(\Omega\) is a ball centered at the singular point and \(u\) is radial and
strictly decreasing in the radial variable. No pointwise blow-up rate at the
pole is assumed. If \(\Omega\) is of class \(C^{2,\alpha}\), the cancellation is
automatic when \(s>1/2\), and for every \(s\in(0,1)\) when \(f\) is constant
near zero. In particular, in the torsion case \(f\equiv1\),
\[
u(x)=\tau_R(x)+kG_R(x,0),
\]
where \(\tau_R\) and \(G_R\) are respectively the fractional torsion function
and the Green function of the ball.
\end{abstract}

\medskip
\noindent\textit{MSC 2020:} 35R11; 35N25; 35B06; 35J75.

\medskip
\noindent\textit{Keywords:} Fractional Laplacian; overdetermined problem; singular solution; moving planes.

\medskip
\noindent\rule{\textwidth}{0.4pt}

\section{Introduction and main results}
\label{sec:introduction}

Overdetermined boundary value problems are a classical source of elliptic rigidity. Serrin's symmetry theorem \cite{Serrin} states that if a bounded smooth domain \(\Omega\subset\R^N\) admits a positive solution of
\[
\begin{cases}
-\Delta u=1 & \text{in }\Omega,\\
u=0 & \text{on }\partial\Omega,\\
\partial_\nu u=\text{constant} & \text{on }\partial\Omega,
\end{cases}
\]
then \(\Omega\) is a ball and \(u\) is radially symmetric. Serrin's proof uses moving planes, the maximum principle, Hopf's boundary lemma and a corner lemma.

Singular overdetermined problems have local precedents. Alessandrini and
Rosset proved radial symmetry for degenerate quasilinear equations with a
constant Neumann datum \cite{AlessandriniRosset}, Enciso and Peralta-Salas
obtained rigidity for \(p\)-harmonic functions in punctured domains
\cite{EncisoPeraltaSalas}, and Agostiniani and Magnanini studied
overdetermined data for a Green function \cite{AgostinianiMagnanini}.
Sciunzi \cite{SciunziMovingPlaneSingular} initiated this line of moving-plane
results for local semilinear equations with singular solutions, proving
symmetry and monotonicity under suitable assumptions on the singular set.
Esposito, Farina and Sciunzi \cite{EspositoFarinaSciunzi} subsequently
extended the framework to singular sets of zero \(2\)-capacity, also allowing
possibly singular nonlinearities; see also Esposito, Montoro and Sciunzi for
the quasilinear setting \cite{EspositoMontoroSciunzi}. More recently,
Esposito, Sciunzi and Soave
proved a singular semilinear counterpart of Serrin's theorem
\cite{EspositoSciunziSoave}: a non-removable pole together with the
overdetermined condition forces the domain to be a ball centered at the pole,
and in the torsion case they classify the regular and singular parts. Their
theorem is the direct local prototype for the present problem.

The fractional tools needed to address this question originate in the work of
Fall and Jarohs, who proved an analogue of Serrin's theorem for the restricted
fractional Laplacian \cite{FallJarohs}. They considered overdetermined problems
of the form
\[
\begin{cases}
(-\Delta)^s u=f(u) & \text{in }\Omega,\\
u=0 & \text{in }\R^N\setminus\Omega,\\
(\partial_\eta)^s u=c & \text{on }\partial\Omega,
\end{cases}
\]
where \((\partial_\eta)^s\) is the fractional normal derivative. Their proof
adapts the moving-plane method to nonlocal equations through maximum
principles for antisymmetric functions, together with a fractional Hopf lemma
and a fractional corner lemma. These tools will play a central role in this
paper. An early nonlocal extension of this moving-plane approach was developed
by Barrios, Montoro and Sciunzi \cite{BarriosMontoroSciunzi} for the
fractional Laplacian with a Hardy potential in bounded domains. All citations below to numbered statements in
Fall--Jarohs refer to the revised arXiv version \cite{FallJarohs}.

The fractional Serrin theory has subsequently developed in several directions.
Exterior and annular configurations were studied by Li-Li and by Soave--Valdinoci \cite{LiLiOverdetermined,SoaveValdinoci}. Soave--Valdinoci also
considered a punctured whole-space problem for bounded weak solutions,
with a prescribed finite value at the puncture and without an overdetermined
fractional normal derivative condition. Biswas and Jarohs treated more general
nonlocal operators \cite{BiswasJarohs}. Parallel-surface conditions, exterior
or annular capacity problems, and quantitative stability were investigated in
\cite{CiraoloDipierroPoggesiPollastroValdinoci,CiraoloPollastro,DipierroPoggesiThompsonValdinoci}.
Recent developments also include higher boundary regularity relevant to
nonlocal overdetermined problems \cite{BarriosRosOtonWeidner} and a different
nonlocal Neumann condition prescribed on an exterior parallel surface
\cite{GattiScheuerWeth}. We are not aware of a result in this literature for a
bounded restricted-fractional Dirichlet problem in which the solution itself
has a non-removable isolated interior singularity and a Serrin-type fractional
normal derivative is prescribed constantly on \(\partial\Omega\).

The purpose of the present paper is to develop a nonlocal counterpart of the
singular rigidity result of Esposito--Sciunzi--Soave \cite{EspositoSciunziSoave}.
The proof builds on the antisymmetric maximum, Hopf and corner principles of
Fall--Jarohs \cite{FallJarohs} and on reflection-compatible zero-capacity
cutoffs in the spirit of Montoro--Punzo--Sciunzi \cite{MontoroPunzoSciunzi}.
Two points require additional treatment. \textup{(i)} A symmetry position reached
before the moving plane meets the pole is excluded through the removability
statement of Lemma~\ref{lem:symmetry-removability}, adapting the local insight
of \cite{EspositoSciunziSoave} to the weak fractional setting. \textup{(ii)} At
an orthogonal boundary contact, the Fall--Jarohs corner estimate has order
\(t^{1+s}\); matching this order requires a first-order tangential cancellation
of \(u/\delta^s\), rather than merely H\"older continuity.

We therefore consider a fractional overdetermined problem with an isolated
non-removable singularity. More precisely, we consider
\begin{equation}\label{eq:main-problem}
\begin{cases}
(-\Delta)^s u=f(u) & \text{in }\Omega\setminus\{0\},\\
u>0 & \text{in }\Omega\setminus\{0\},\\
u=0 & \text{in }\R^N\setminus\Omega,\\
(\partial_\eta)^s u=c & \text{on }\partial\Omega,
\end{cases}
\end{equation}
where \(0<s<1\), \(N>2s\), \(c<0\), \(\Omega\subset\R^N\) is a bounded
open set of class \(C^2\) with \(0\in\Omega\), and
\(f\in\Lip_{\loc}(\R_+)\) with \(\mathbb R_+:=[0,+\infty)\). The assumption \(N>2s\) is natural in the
singular fractional setting. In this range the fundamental singularity has
the power-type behavior \(|x|^{2s-N}\), and a point has zero fractional
\(s\)-capacity. The latter fact is essential for the capacity-cutoff argument
that removes the original and reflected poles from the moving-plane tests.
The notion of non-removability is given in Definition~\ref{def:removable-singularity}; in particular, no pointwise asymptotic or
prescribed blow-up rate at the pole is assumed.

The comparison argument follows the zero-capacity strategy of Montoro, Punzo
and Sciunzi \cite{MontoroPunzoSciunzi}. We regularize
\(w_\lambda=u_\lambda-u\) by a reflection-invariant product cutoff removing
\(0\) and \(0^\lambda\). After restriction to the moving half-space this
produces the weighted negative-part test used below, with a capacity error that
tends to zero independently of the singular behavior of \(u\) near the two poles.

To define the overdetermined datum, we use the boundary regularity theory of
Ros-Oton and Serra \cite{RosOtonSerra}. Since the pole is strictly interior, a
cutoff argument controlling the nonlocal commutator and tail terms yields
H\"older continuity of \(u/\delta^s\) near \(\partial\Omega\).

The only point at which a finer boundary behavior is needed is the exclusion
of the orthogonality alternative in the moving-plane argument.  To formulate
the exact cancellation used there, let
\[
\psi:=\frac{u}{\delta^s},
\qquad
\nu_Q:=\text{the inward unit normal to }\partial\Omega\text{ at }Q.
\]
We introduce the following condition:
\begin{equation}\label{eq:TC}
\tag{TC}
\psi\bigl(Q+t(\nu_Q+\tau)\bigr)
-\psi\bigl(Q+t(\nu_Q-\tau)\bigr)
=o(t)
\qquad\text{as }t\to0^+,
\end{equation}
for every \(Q\in\partial\Omega\) and every unit vector
\(\tau\in T_Q\partial\Omega\). The two displayed points belong to
\(\Omega\) for all sufficiently small \(t>0\). When \(N=1\), the tangent
space contains no unit vector, so the condition is vacuous.

\paragraph{Role of the tangential cancellation condition.}
At an orthogonal contact, the fractional corner lemma gives a lower bound of
order \(t^{1+s}\). Since \(u=\delta^s\psi\) and the boundary factor
\(\delta^s\) contributes order \(t^s\), the reflected values of \(\psi\)
must differ by \(o(t)\) to obtain a strictly smaller upper bound. A merely
\(O(t)\) difference would only give order \(t^{1+s}\). Thus \eqref{eq:TC}
matches the order needed in the corner argument; the full calculation is given
in Lemma~\ref{lem:no-corner}.

Full \(C^1\)-regularity of \(\psi\) implies \eqref{eq:TC}, because the
overdetermined datum gives the constant trace \(\psi=-c\) on
\(\partial\Omega\); generic \(C^\alpha\)-regularity with \(\alpha<1\) does
not provide the required order. Thus \eqref{eq:TC} is weaker than the quotient
regularity assumed in the revised Fall--Jarohs theorem
\cite[Remark 1.3]{FallJarohs}. We do not claim necessity or optimality with
respect to other symmetry methods. The higher boundary estimates of
Abatangelo--Ros-Oton \cite{AbatangeloRosOton} make it automatic in the regimes
stated below.

The weakening from full \(C^1\)-regularity to \eqref{eq:TC} is realized
within the class of solutions considered here, and not only at the level of
abstract boundary profiles. Indeed, Appendix~\ref{app:strict-TC-example}
constructs, for every \(s\in(0,1/2)\) with \(N=2\), a positive radial solution of
\[
(-\Delta)^s u=-u\quad\text{in }B_1\setminus\{0\},
\qquad u=0\quad\text{in }\mathbb R^2\setminus B_1,
\]
with a non-removable pole at the origin and constant fractional normal
derivative. Radiality makes \eqref{eq:TC} hold with exact cancellation, while
\[
\frac{u}{\delta^s}
=A_s-B_s\delta^{2s}+o(\delta^{2s})
\qquad\text{as }\delta\to0^+,
\qquad A_s,B_s>0.
\]
Since \(2s<1\), this yields \(u/\delta^s\notin C^1\) up to the boundary. Thus
\eqref{eq:TC} is strictly weaker than full \(C^1\)-regularity even for the
fixed linear nonlinearity \(f(t)=-t\).

The new boundary ingredient used here is therefore the first-order
cancellation \eqref{eq:TC}; the capacity cutoff and antisymmetric boundary
principles remain those of the cited literature. Once symmetry has been
established, the singular part in the torsion case is classified by the
distributional fractional B\^ocher theorem of Li, Wu and Xu
\cite[Theorem 4]{LiWuXu}. The explicit identification then uses the Boggio
representations of the torsion and Green functions in the ball; see
Abatangelo, Jarohs and Salda\~na \cite{AbatangeloJarohsSaldana} and Bucur
\cite{Bucur}.

Our first main result isolates the boundary assumption used in every
range \(0<s<1\):

\begin{theorem}\label{thm:main}
Let \(0<s<1\) and \(N>2s\). Let \(\Omega\subset\R^N\) be a bounded
open set of class \(C^2\) such that \(0\in\Omega\). Let
\(f\in\Lip_{\loc}(\R_+)\), and let \(u\) be a positive solution of
\eqref{eq:main-problem} with a non-removable singularity at the origin.
More precisely, assume that
\[
u\in W^{s,2}_{\loc}(\mathbb R^N\setminus\{0\})
\cap L^1_s(\mathbb R^N)
\cap C(\mathbb R^N\setminus\{0\})
\]
and that the equation is satisfied in the weak sense of Definition~\ref{def:punctured-weak-solution}.  Non-removability is understood in the
sense of Definition~\ref{def:removable-singularity}.
Assume moreover that the tangential cancellation condition
\eqref{eq:TC} holds.
Then there exists \(R>0\) such that
\[
\Omega=B_R(0)
\]
and \(u\) is radially symmetric and strictly decreasing as a function of
\(|x|\).
\end{theorem}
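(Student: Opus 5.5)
We run the moving-plane method in every direction. Fix a unit vector \(e\); after a rotation take \(e=e_1\). For \(\lambda\in\R\) set \(T_\lambda=\{x_1=\lambda\}\), \(\Sigma_\lambda=\{x_1<\lambda\}\), let \(x^\lambda\) be the reflection of \(x\) through \(T_\lambda\), and put \(u_\lambda(x)=u(x^\lambda)\) and \(w_\lambda=u_\lambda-u\). Then \(w_\lambda\) is antisymmetric with respect to \(T_\lambda\). Away from the poles \(0\) and \(0^\lambda\) it satisfies, in the weak sense, \((-\Delta)^s w_\lambda=c_\lambda(x)w_\lambda\) in \(\Sigma_\lambda\cap\Omega_\lambda\), where \(c_\lambda\) is the difference quotient of \(f\). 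The aim is to show \(w_\lambda\ge0\) in \(\Sigma_\lambda\) for every \(\lambda\in(\lambda_0,0)\), with \(\lambda_0=\inf_\Omega x_1\), and to conclude from the critical position.

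\emph{Step 1 (start and continuation).} Testing against \(w_\lambda^-\) directly is not possible, since \(u_\lambda\) may blow up at \(0^\lambda\). We therefore test against \(w_\lambda^-\varphi_\varepsilon^2\), where \(\varphi_\varepsilon\) is a reflection-invariant product of zero-capacity cutoffs around \(0\) and \(0^\lambda\), as in Montoro--Punzo--Sciunzi. This needs \(N>2s\), so that points have zero \(s\)-capacity. For \(\lambda<0\) the pole \(0\) lies outside \(\Sigma_\lambda\), and \(w_\lambda^-\) vanishes near \(0^\lambda\) because \(u_\lambda\ge u\) there (\(u\) is continuous at \(0^\lambda\) and \(u_\lambda\) is large). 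The cutoff errors tend to zero as \(\varepsilon\to0\), which yields the weighted weak maximum principle for antisymmetric functions in narrow or small-measure sets. Here the local Lipschitz bound on \(f\) is used on the range of \(u\) over the relevant compact set, which is bounded away from the pole.

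Applying this for \(\lambda\) close to \(\lambda_0\) gives \(w_\lambda\ge0\). We then let \(\bar\lambda=\sup\{\mu\le0:\ w_\lambda\ge0\text{ in }\Sigma_\lambda\ \forall\lambda\in(\lambda_0,\mu)\}\). The strong maximum principle for antisymmetric supersolutions (Fall--Jarohs) gives either \(w_{\bar\lambda}\equiv0\) or \(w_{\bar\lambda}>0\) in \(\Sigma_{\bar\lambda}\cap\Omega\). In the second case, a compactness argument combined with the small-measure principle extends the range beyond \(\bar\lambda\). So if \(\bar\lambda<0\), then at \(\bar\lambda\) either the reflected cap touches \(\partial\Omega\) internally tangentially, or \(T_{\bar\lambda}\) is orthogonal to \(\partial\Omega\) at some point \(Q\).

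\emph{Step 2 (excluding \(\bar\lambda<0\)).} Suppose first \(w_{\bar\lambda}\equiv0\) with \(\bar\lambda<0\). Then \(u\) is symmetric about \(T_{\bar\lambda}\), so \(u\) is also singular at \(0^{\bar\lambda}\neq0\). But \(u\) is continuous on \(\R^N\setminus\{0\}\), and Lemma~\ref{lem:symmetry-removability} shows that the singularity at \(0\) must then be removable, a contradiction.

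Otherwise \(w_{\bar\lambda}>0\). At an internal tangency \(P\), the two functions \(u\) and \(u_{\bar\lambda}\) have equal fractional normal derivatives at \(P\), because \((\partial_\eta)^su=c\) on \(\partial\Omega\). This contradicts the fractional Hopf lemma for antisymmetric functions, which forces \(\psi_{\bar\lambda}(P)>\psi(P)\).

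At an orthogonal contact \(Q\), we invoke Lemma~\ref{lem:no-corner}. The fractional corner lemma bounds \(w_{\bar\lambda}\) from below at points \(Q+t(\nu_Q+\tau)\) by order \(t^{1+s}\). On the other hand, writing \(u=\delta^s\psi\), reflection exchanges the points \(Q+t(\nu_Q\pm\tau)\) and preserves \(\delta\) to the needed order, since \(\partial\Omega\) is \(C^2\). Condition \eqref{eq:TC} then gives \(w_{\bar\lambda}=o(t^{1+s})\), a contradiction.

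Hence \(\bar\lambda=0\), so \(u(x^0)\ge u(x)\) for \(x_1<0\). Applying the same argument to \(-e\) gives the reverse inequality, so \(u\) is symmetric about \(\{x_1=0\}\). As \(e\) is arbitrary, \(u\) is radial about \(0\) and \(\Omega\) is a ball \(B_R(0)\). Strict radial monotonicity follows from \(w_\lambda>0\) for \(\lambda<0\) (strong maximum principle), since \(w_\lambda\not\equiv0\) by Step 2.

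\emph{Main obstacle.} The hardest step is the corner exclusion, which requires matching the orders \(t^{1+s}\), together with the justification of the capacity cutoff errors near \(0^\lambda\) for weak solutions without any growth rate at the pole. I would handle the corner first through Lemma~\ref{lem:no-corner}, checking the \(\delta\) expansion along the two reflected rays up to \(O(t^2)\).
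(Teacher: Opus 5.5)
Your outline is the paper's proof. It uses capacity-cutoff comparison in small sets, the Fall--Jarohs strong maximum principle, Lemma~\ref{lem:symmetry-removability} to exclude symmetry before the plane reaches the pole, the Hopf lemma against internal tangency, and the corner lemma combined with \eqref{eq:TC} against orthogonal contact.

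One justification is wrong as stated. You assert that \(w_\lambda^-\) vanishes near \(0^\lambda\) because \(u_\lambda\) is large there. No blow-up at the pole is assumed, however, and non-removability in the sense of Definition~\ref{def:removable-singularity} does not by itself give \(u(x)\to+\infty\) as \(x\to0\). You also do not need this claim. Set \(q=\mathbf 1_{\Sigma_\lambda}w_\lambda^-\). On \(\{q>0\}\) one has \(0\le q=u-u_\lambda\le u\le M_\lambda\), and \(M_\lambda<\infty\) because \(\overline{\Omega_\lambda}\) stays away from \(0\) when \(\lambda<0\). Hence the cutoff error \(\frac{k_{N,s}}2\iint q(x)q(y)(\varphi_\varepsilon(x)-\varphi_\varepsilon(y))^2|x-y|^{-N-2s}\,dx\,dy\) is at most \(CM_\lambda^2\) times the squared \(H^s\)-seminorm of the cutoff around \(0^\lambda\). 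That seminorm tends to \(0\) since \(N>2s\), whatever \(u\) does near the pole. This is exactly how the paper makes Lemma~\ref{lem:small-domain-comparison} independent of the singular behavior.

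There are two smaller points. First, in the corner step you need \(\delta(Q+t(\nu_Q+\tau))-\delta(Q+t(\nu_Q-\tau))=o(t^2)\), not merely \(O(t^2)\); an \(O(t^2)\) discrepancy only gives \(O(t^{1+s})\), which does not contradict the corner bound. The \(o(t^2)\) estimate holds because differentiating \(|\nabla\delta|^2=1\) gives \(D^2\delta(Q)\nu_Q=0\), which kills the cross terms \(\pm t^2D^2\delta(Q)[\tau,\nu_Q]\). Second, radiality of \(u\) alone only makes \(\Omega\) a ball plus possibly concentric annuli, since no connectedness is assumed. To conclude \(\Omega=B_R(0)\) you must invoke the reflection inclusions \(\Omega'_\mu\subset\Omega\) for \(\mu<0\), or equivalently the monotonicity of \(u\) along lines. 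The paper does this through its line-section argument.
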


No connectedness assumption is imposed. This is consistent with the regular
fractional Serrin theorem of Fall--Jarohs \cite{FallJarohs}. In the present
singular setting the moving-plane inclusions obtained from both sides imply,
in every direction, that every affine line section of \(\Omega\) is an
interval. Hence \(\Omega\) is convex, and connectedness follows a posteriori;
the argument is written out at the end of the proof of Theorem~\ref{thm:main}.

The next result removes \eqref{eq:TC} in the ranges in which it
follows from boundary regularity.

\begin{corollary}\label{cor:automatic-TC}
Let the hypotheses of Theorem~\ref{thm:main} hold, except for
\eqref{eq:TC}, and assume in addition that \(\Omega\) is of class
\(C^{2,\alpha}\) for some \(\alpha\in(0,1)\).  If either
\begin{enumerate}[label=\textup{(\roman*)}]
\item \(s>1/2\), or
\item there exists \(a>0\) such that \(f\) is constant on \([0,a]\),
\end{enumerate}
then \eqref{eq:TC} holds automatically.  Consequently,
\(\Omega=B_R(0)\) for some \(R>0\), and \(u\) is radially symmetric and
strictly decreasing as a function of \(|x|\).
\end{corollary}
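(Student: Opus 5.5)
The plan is to reduce \eqref{eq:TC} to the statement that \(\psi=u/\delta^s\) is \(C^{1}\) (or at least \(C^{1,\gamma}\)) up to \(\partial\Omega\) in a boundary neighborhood. The introduction already observes that such regularity implies \eqref{eq:TC}, because the overdetermined datum forces \(\psi\equiv -c\) on \(\partial\Omega\). Once \eqref{eq:TC} is established, the conclusion follows directly from Theorem~\ref{thm:main}.

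\textbf{Step 1: localize away from the pole.} Fix \(\rho>0\) with \(\overline{B_{2\rho}}\subset\Omega\), and a cutoff \(\eta\in C_c^\infty(\R^N\setminus \overline{B_\rho})\) with \(\eta\equiv1\) outside \(B_{2\rho}\). Set \(v=\eta u\). Then \(v\) is bounded, since \(u\) is continuous on \(\overline\Omega\setminus B_\rho\). In a neighborhood \(U\) of \(\partial\Omega\), \(v\) solves \((-\Delta)^s v=f(u)+h\), where \(h(x)=-\int (1-\eta(y))u(y)K(x,y)\,dy\) is the commutator/tail term. Since \(\operatorname{dist}(U,B_{2\rho})>0\) and \(u\in L^1_s(\R^N)\), the kernel is smooth in \(x\) on \(U\), so \(h\in C^\infty(\overline U)\). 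This is the same cutoff mechanism the paper uses for Hölder continuity of \(\psi\). By the Ros-Oton--Serra estimate, \(u\in C^s(\R^N)\) near \(\partial\Omega\) and \(\psi\in C^{\beta}\) up to the boundary.

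\textbf{Step 2: regularity of the right-hand side.}
\begin{itemize}
\item Case (i), \(s>1/2\): \(f\) is locally Lipschitz and \(u\in C^s\) near \(\partial\Omega\), so \(g:=f(u)+h\in C^{s}(\overline{\Omega\cap U})\). Apply the higher boundary regularity of Abatangelo--Ros-Oton \cite{AbatangeloRosOton} in \(C^{2,\alpha}\) domains: for \(g\in C^{\gamma}\), \(v/\delta^s\in C^{\gamma+s}\) up to the boundary, provided \(\gamma+s\) avoids integers and the domain regularity suffices (\(\gamma+s<1+\alpha\)). With \(\gamma=s\) we get \(2s>1\), so \(\psi\in C^{1,\min(2s-1,\cdot)}\) near \(\partial\Omega\) after possibly shrinking exponents. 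If only \(C^{\gamma}\) with \(\gamma+s\) slightly below \(2s\) is available, bootstrap once: a \(C^{1}\) quotient upgrades \(u\) to \(C^{s}\) with better structure, but a single step already exceeds \(1\).
\item Case (ii): since \(u\to0\) at \(\partial\Omega\) by continuity, on a thinner neighborhood \(U'\) we have \(u\le a\), so \(f(u)\equiv f(0)\) there. Hence \(g=f(0)+h\) is smooth on \(\overline{\Omega\cap U'}\), and \cite{AbatangeloRosOton} gives \(\psi\in C^{1+\alpha'}\) (indeed up to \(C^{1+\alpha}\), limited by the domain) for every \(s\in(0,1)\).
\end{itemize}

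\textbf{Step 3: conclude \eqref{eq:TC}.} Given \(\psi\in C^1\) near \(\partial\Omega\) with \(\psi=-c\) on \(\partial\Omega\), the tangential gradient of \(\psi\) vanishes on \(\partial\Omega\). A first-order Taylor expansion at \(Q\) gives \(\psi(Q+t(\nu_Q\pm\tau))=-c+t\partial_\nu\psi(Q)\pm t\,\nabla\psi(Q)\cdot\tau+o(t)\), with \(\nabla\psi(Q)\cdot\tau=0\). The difference is therefore \(o(t)\). Theorem~\ref{thm:main} then yields the claim.

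\textbf{Main obstacle.} The expected difficulty is matching precisely the hypotheses of the Abatangelo--Ros-Oton theorem. It is stated for solutions of a Dirichlet problem in a domain, with right-hand side regularity measured in a weighted or Hölder class and with exterior data zero. The localized \(v\) is not zero near the pole, but this is interior and harmless if we instead localize the estimate to a boundary ball \(B_r(Q)\). I would first try their local version (\(C^{\gamma}\) data in \(\Omega\cap B_1\), with a tail term controlled by \(L^1_s\)). I would also check that in case (i) the exponent \(2s\) is not an integer and that \(\alpha\) suffices. If \(2s-1\ge\alpha\), cap at \(1+\alpha\); that is still \(C^1\), which is all that is needed.
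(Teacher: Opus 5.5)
Your proof is correct. It follows the paper's skeleton: cut off the pole, check H\"older regularity of the right-hand side near \(\partial\Omega\), apply \cite[Theorem 1.4]{AbatangeloRosOton} to get \(u/\delta^s\in C^{1,\varepsilon}\) in a collar, then Taylor-expand using the constant trace \(-c\) as in Proposition~\ref{prop:automatic-TC}. For the right-hand side you use \(u\in C^s\) when \(s>1/2\), and \(f(u)\equiv f(0)\) in case (ii).

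The real difference is how the boundary estimate is localized. The paper (Lemma~\ref{lem:automatic-boundary-regularity}) uses Abatangelo--Ros-Oton in global Dirichlet form, on each connected component, applied to the truncation \(w=\chi u\). It therefore needs \(G\in C^{\sigma}(\overline\Omega)\), with \(\sigma=1+\varepsilon-s\), on all of \(\Omega\). In the transition region, where the cutoff commutator lives, this comes only from the interior bootstrap of Lemma~\ref{lem:interior-bootstrap} together with the H\"older mapping property of \((-\Delta)^s\).

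You look only at the boundary collar, where the right-hand side is \(f(u)+h\) with \(h\) smooth. You then use a local boundary version of the estimate, with the far field absorbed by \(\|v\|_{L^\infty}\) or the \(L^1_s\) tail. This is shorter and avoids the interior bootstrap entirely. The cost is that it relies on having the estimate in localized form. If you only have the global statement, you are back to controlling the right-hand side in the interior, which is exactly what Lemma~\ref{lem:interior-bootstrap} is for.

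Two details you should add:
\begin{enumerate}
\item The estimate controls \(v/d^s\) for a regularized distance \(d\). You need the conversion \eqref{eq:regularized-distance-ratio}, namely \(d/\delta\in C^{1,\varepsilon}\) and bounded away from zero in a thin collar, before you can conclude anything about \(\psi=u/\delta^s\).
\item In case (i), do not take \(\beta=2s\). It violates \(\beta+s\notin\mathbb N\) at \(s=2/3\), and it is not admissible for a \(C^{2,\alpha}\) domain when \(2s-1>\alpha\). Take instead \(\beta=1+\varepsilon\) with \(0<\varepsilon<\min\{\alpha,2s-1,1-s\}\). This only requires \(f(u)+h\in C^{1+\varepsilon-s}\), which follows from \(C^s\) since \(1+\varepsilon-s<s\).
\end{enumerate}
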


\begin{proof}
Proposition~\ref{prop:automatic-TC} gives
\eqref{eq:TC}.  The conclusion then follows from Theorem~\ref{thm:main}.
\end{proof}

Once the domain is known to be a ball, the singular part can be described by
the Green function. In the torsion case this gives the following explicit
decomposition.

\begin{corollary}[Torsion classification]\label{cor:torsion-classification}
Let \(0<s<1\) and \(N>2s\), and let \(\Omega\subset\mathbb R^N\) be a bounded
open set of class \(C^{2,\alpha}\), for some \(\alpha\in(0,1)\), such that
\(0\in\Omega\).  Let \(u\) be a positive solution of
\eqref{eq:main-problem}, with \(f\equiv1\), having a non-removable singularity
at the origin, in the regularity class and weak sense specified in Theorem~\ref{thm:main}.  Then, without any additional tangential cancellation
assumption, \(\Omega=B_R(0)\) and
\[
u(x)=\tau_{R}(x)+k G_{R}(x,0),
\]
where \(\tau_{R}\) is the unique solution of the torsion problem
\[
\begin{cases}
(-\Delta)^s\tau_{R}=1 & \text{in }B_R,\\
\tau_{R}=0 & \text{in }\R^N\setminus B_R,
\end{cases}
\]
and \(G_{R}\) is the Green function of \((-\Delta)^s\) in \(B_R\) with zero exterior datum. Moreover,
\[
k=
\frac{sR^{N-s}}{c_{N,s}2^s}
\left(-c-\gamma_{N,s}(2R)^s\right)>0,
\]
with
\[
\gamma_{N,s}
=
\frac{\Gamma(N/2)}{2^{2s}\Gamma(N/2+s)\Gamma(1+s)}, \qquad c_{N,s}=
\frac{\Gamma\left(N/2\right)}
{2^{2s}\pi^{N/2}\Gamma(s)^2}.
\]
Here \(\gamma_{N,s}\) and \(c_{N,s}\) are the positive constants appearing,
respectively, in the torsion function and in the Green representation; see
\cite{Bucur,RosOtonSerra}. In particular,
\[
-c>\gamma_{N,s}(2R)^s.
\]
If the classified family is enlarged to allow a removable singularity, then
equality corresponds to \(k=0\) and \(u=\tau_R\). Finally,
\[
\lim_{x\to0}|x|^{N-2s}u(x)
=k\,c_{N,s}B(s,N/2-s)
=k\,c_{N,s}
\frac{\Gamma(s)\Gamma(N/2-s)}{\Gamma(N/2)}.
\]
\end{corollary}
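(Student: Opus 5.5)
The plan is in four steps: reduce to the ball via Corollary~\ref{cor:automatic-TC}, decompose with the Bôcher theorem, identify \(k\) from the boundary datum, and read off the pole asymptotics.

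\textbf{Step 1 (reduction to the ball).} Since \(f\equiv1\) is constant on every \([0,a]\), alternative (ii) of Corollary~\ref{cor:automatic-TC} applies in the whole range \(0<s<1\). Hence \eqref{eq:TC} holds without being assumed, and Theorem~\ref{thm:main} gives \(\Omega=B_R(0)\), with \(u\) radial and strictly decreasing in \(|x|\).

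\textbf{Step 2 (decomposition).} I would set \(v:=u-\tau_R\). Then \(v\) solves \((-\Delta)^s v=0\) weakly in \(B_R\setminus\{0\}\), vanishes outside \(B_R\), and is nonnegative: by comparison \(u\ge\tau_R\) away from the pole, since the weak supersolution property survives the zero-capacity cutoff used in Theorem~\ref{thm:main}. It is also in \(L^1_s\). The distributional fractional Bôcher theorem \cite[Theorem 4]{LiWuXu}, applied to \(v\) (extended by zero outside \(B_R\)), gives
\[
(-\Delta)^s v=k\,\delta_0 \quad\text{in }\mathcal D'(B_R)
\]
for some \(k\ge0\). Then \(v-kG_R(\cdot,0)\) is \(s\)-harmonic in all of \(B_R\) with zero exterior datum, so it vanishes, and
\[
u=\tau_R+kG_R(\cdot,0).
\]
I expect the main obstacle to be checking that the weak notion of Definition~\ref{def:punctured-weak-solution}, together with \(u\in L^1_s\), meets the hypotheses of \cite{LiWuXu}. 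This needs \(v\in L^1_{\loc}\) near \(0\) and the correct test-function class; I would obtain local integrability from the capacity-cutoff estimates already used in the paper. Non-removability (Definition~\ref{def:removable-singularity}) then forces \(k\neq0\), hence \(k>0\).

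\textbf{Step 3 (identifying \(k\)).} I would compute the fractional normal derivatives explicitly. The torsion function is
\[
\tau_R(x)=\gamma_{N,s}(R^2-|x|^2)_+^s,
\]
so \(\tau_R/\delta^s\to\gamma_{N,s}(2R)^s\) on \(\partial B_R\). For the Green function I would use the Boggio formula \cite{Bucur}:
\[
G_R(x,0)=c_{N,s}|x|^{2s-N}\int_0^{r_0(x)}\frac{t^{s-1}}{(t+1)^{N/2}}\,dt,
\qquad r_0=\frac{(R^2-|x|^2)_+}{|x|^2}.
\]
As \(|x|\to R\) one has \(r_0\approx 2R\delta/R^2\), so
\[
G_R(x,0)\sim c_{N,s}R^{2s-N}\,\frac{(2\delta/R)^s}{s}.
\]
This gives
\[
\frac{G_R(x,0)}{\delta^s}\to \frac{c_{N,s}2^s}{s\,R^{N-s}}.
\]
With the paper's sign convention the overdetermined datum reads \(c=-\lim u/\delta^s\). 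Therefore
\[
-c=\gamma_{N,s}(2R)^s+k\,\frac{c_{N,s}2^s}{sR^{N-s}},
\]
which solves to the stated formula for \(k\). The inequality \(-c>\gamma_{N,s}(2R)^s\) is then equivalent to \(k>0\), and \(k=0\) corresponds exactly to \(u=\tau_R\).

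\textbf{Step 4 (asymptotics at the pole).} As \(x\to0\) we have \(r_0\to\infty\), so the integral in the Boggio formula tends to
\[
\int_0^\infty t^{s-1}(1+t)^{-N/2}\,dt=B(s,N/2-s)=\frac{\Gamma(s)\Gamma(N/2-s)}{\Gamma(N/2)},
\]
which is finite because \(N>2s\). Since \(\tau_R\) is bounded, \(|x|^{N-2s}\tau_R(x)\to0\). This yields
\[
\lim_{x\to0}|x|^{N-2s}u(x)=k\,c_{N,s}B(s,N/2-s).
\]
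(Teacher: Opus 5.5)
Your proposal follows the paper's proof essentially step by step. The shared ingredients are: Corollary~\ref{cor:automatic-TC}(ii) to get the ball; $v=u-\tau_R$ made nonnegative by the capacity-cutoff comparison; the Li--Wu--Xu B\^ocher theorem; the Boggio boundary expansion to identify $k$; and the Beta integral at the pole. In the comparison step, what makes the argument work is that $v^-\le\tau_R$ is bounded, so the cutoff error is at most $C\|v^-\|_\infty^2[\eta_{0,\varepsilon}]_{H^s(\R^N)}^2\to0$.

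The obstacle you anticipate is not one. Indeed $L^1_s(\R^N)\subset L^1_{\loc}(\R^N)$, and the punctured weak equation gives the distributional one in $B_R\setminus\{0\}$ through \eqref{eq:energy-distribution-pairing}.

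The step that does need an argument is ``$v-kG_R(\cdot,0)$ is $s$-harmonic in $B_R$ with zero exterior datum, so it vanishes.'' For distributional solutions in $L^1_s$ this is false as stated. The function $(R^2-|x|^2)_+^{s-1}$ is $s$-harmonic in $B_R$, vanishes outside, and lies in $L^1_s(\R^N)$. So you have to feed in boundary information. The paper does this as follows.
\begin{itemize}
\item It applies Proposition~\ref{prop:boundary-quotient} to $v$ to get $|v|\le C\delta^s$ near $\partial B_R$.
\item Boggio's formula gives the same bound for $G_R(\cdot,0)$.
\item Very-weak interior regularity makes $z=v-kG_R(\cdot,0)$ smooth in $B_R$.
\item Bucur's uniqueness theorem for continuous pointwise solutions then gives $z\equiv0$.
\end{itemize}
In fact, the continuity of $u$ on $\R^N\setminus\{0\}$ in the standing class would already make $z$ continuous on $\R^N$. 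That is enough to run the maximum-principle argument. Whichever route you take, it has to be stated.

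Finally, you derive $k>0$ after the decomposition: if $k=0$, then $u=\tau_R$ would be a regular extension, contradicting non-removability. This is a correct and slightly simpler variant of the paper's argument, which proves $k>0$ before the uniqueness step via regularity of $v$ at the origin.
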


Section~\ref{sec:preliminaries} fixes the notation and records the required
preliminary results. Section~\ref{sec:boundary-quotient} establishes boundary
regularity in the presence of the interior pole. Section~\ref{sec:comparison-principles} develops the comparison principles for the
reflected singularity. The moving-plane proof of Theorem~\ref{thm:main}
occupies Section~\ref{sec:moving-plane}. Section~\ref{sec:torsion} proves the
torsion classification and its explicit boundary and pole consequences, while
Appendix~\ref{app:strict-TC-example} records the strictness example for
\eqref{eq:TC} stated above.

\section{Preliminaries}
\label{sec:preliminaries}

In this section we collect the functional framework, moving-plane notation, and standard formulas used throughout the paper.

\begin{assumption}
Unless otherwise stated, we work under the hypotheses and notation of Theorem~\ref{thm:main}. In particular,
\[
u\in W^{s,2}_{\loc}(\R^N\setminus\{0\})
\cap L^1_s(\R^N)
\cap C(\R^N\setminus\{0\})
\]
solves \eqref{eq:main-problem} in the weak punctured sense, and the origin is
non-removable. The tangential cancellation hypothesis is invoked only in the
corner step that excludes an orthogonal contact; it is not part of the standing
assumptions in the automatic regularity regimes.
\end{assumption}

\begin{remark}
The three regularity assumptions imposed on \(u\) have different roles. The
local condition
\[
u\in W^{s,2}_{\loc}(\mathbb R^N\setminus\{0\})
\]
is needed in order to use the weak formulation with test functions supported
away from the pole. The tail condition
\[
 u \in L^1_s(\mathbb R^N)
\]
is the natural integrability assumption for the restricted fractional Laplacian,
because it makes the tail terms finite and allows us to
interpret \((-\Delta)^s u\) in a distributional sense. Finally,
\[
u\in C(\mathbb R^N\setminus\{0\})
\]
allows us to compare \(u\) and \(u_{\lambda}\) away from the pole pointwise.
\end{remark}

\begin{remark}
The condition \(N>2s\) has two reasons. It gives the power-type behavior \(|x|^{2s-N}\) of the fundamental singularity and ensures that points have zero fractional \(s\)-capacity. The last property is what permits the capacity cutoffs around the pole and around its reflected image in the moving plane argument.
\end{remark}

\subsection{Fractional Laplacian and weak solutions}

We collect here the normalization of the fractional Laplacian and the
functional setting used throughout the paper.

\begin{definition}
For a sufficiently regular function \(u:\R^N\to\R\), we define
\begin{equation}\label{eq:pointwise-fractional-laplacian}
(-\Delta)^s u(x)
=
k_{N,s}\PV\int_{\R^N}
\frac{u(x)-u(y)}{|x-y|^{N+2s}}\dd y,
\end{equation}
where
\begin{equation}\label{eq:operator-normalization}
k_{N,s}
=
\frac{2^{2s}s\,\Gamma\left(\frac{N+2s}{2}\right)}
{\pi^{N/2}\Gamma(1-s)}
\end{equation}
is the normalization constant; see, for instance, \cite{BucurValdinoci}.
\end{definition}

\begin{definition}
Let \(U\subset\R^N\) be open. The fractional Sobolev space
\(W^{s,2}(U)\) is endowed with the norm
\[
\|v\|_{W^{s,2}(U)}^2
:=\|v\|_{L^2(U)}^2+[v]_{W^{s,2}(U)}^2,
\qquad
[v]_{W^{s,2}(U)}^2
:=\int_U\int_U
\frac{|v(x)-v(y)|^2}{|x-y|^{N+2s}}\dd x\dd y.
\]
We write
\[
H^s(\R^N):=W^{s,2}(\R^N),
\qquad
\|v\|_{H^s(\R^N)}:=\|v\|_{W^{s,2}(\R^N)},
\]
and denote by \(W^{s,2}_{\loc}(U)\) the corresponding local space, endowed
with the usual topology generated by the norms
\(\|\cdot\|_{W^{s,2}(V)}\), \(V\Subset U\).

For an open set \(D\subset\R^N\), we first introduce
\[
\widetilde H^s(D)
:=\overline{C_c^\infty(D)}^{\,H^s(\R^N)},
\qquad
\|v\|_{\widetilde H^s(D)}:=\|v\|_{H^s(\R^N)}.
\]
If \(D\) is bounded with continuous boundary, then
\begin{equation}\label{eq:H0-characterization}
\widetilde H^s(D)
=
\left\{v\in H^s(\R^N):v=0\ \text{a.e. in }\R^N\setminus D\right\};
\end{equation}
see \cite[Theorem 6]{FiscellaServadeiValdinociDensity}. We denote this common
space by \(H_0^s(D)\), with
\[
\|v\|_{H_0^s(D)}:=\|v\|_{H^s(\R^N)}.
\]
Thus, on the Lipschitz domains used below, \(C_c^\infty(D)\) is dense in
\(H_0^s(D)\) for every \(0<s<1\).

We also use the weighted tail space
\[
L_s^1(\R^N)
:=\left\{u:\R^N\to\R:\|u\|_{L_s^1(\R^N)}<\infty\right\},
\qquad
\|u\|_{L_s^1(\R^N)}
:=\int_{\R^N}\frac{|u(x)|}{1+|x|^{N+2s}}\dd x.
\]
\end{definition}

The bilinear form associated with \eqref{eq:pointwise-fractional-laplacian} is
\begin{equation}\label{eq:energy-form}
\mathcal E(u,v)
:=
\frac{k_{N,s}}2
\int_{\R^N}\int_{\R^N}
\frac{(u(x)-u(y))(v(x)-v(y))}{|x-y|^{N+2s}}\dd x\dd y,
\qquad u,v\in H^s(\R^N).
\end{equation}
In particular,
\(\mathcal E(v,v)=\frac{k_{N,s}}2[v]_{W^{s,2}(\R^N)}^2\).
More generally, given a compact set \(\Gamma \subset \R^{N}\), if
\(u\in W^{s,2}_{\loc}(\R^N\setminus\Gamma)\cap L_s^1(\R^N)\) and
\(\phi\in C_c^\infty(\R^N\setminus\Gamma)\), then
\(\mathcal E(u,\phi)\) is well defined and
\begin{equation}\label{eq:energy-distribution-pairing}
\mathcal E(u,\phi)
=
\int_{\R^N}u(-\Delta)^s\phi\dd x.
\end{equation}
For the restricted fractional Laplacian, the Dirichlet condition is understood
in the exterior sense,
\begin{equation}\label{eq:exterior-dirichlet}
u=0\qquad\text{in }\R^N\setminus\Omega,
\end{equation}
a convention that will be essential in the moving-plane comparison.

\begin{definition}\label{def:punctured-weak-solution}
Let \(\Omega \subset \R^N\) be open, let \(\Gamma\subset\Omega\) be compact and let
\(g\in L^1_{\loc}(\Omega\setminus\Gamma)\). A function
\[
u\in W^{s,2}_{\loc}(\R^N\setminus\Gamma)\cap L_s^1(\R^N)
\]
is a weak solution of
\[
(-\Delta)^s u=g\qquad\text{in }\Omega\setminus\Gamma
\]
if
\[
\mathcal E(u,\phi)=\int_\Omega g\phi\dd x
\qquad
\forall\phi\in C_c^\infty(\Omega\setminus\Gamma).
\]
\end{definition}
The tail condition also permits a distributional formulation. For
\(u\in L_s^1(\R^N)\), set
\[
\langle(-\Delta)^su,\phi\rangle
:=\int_{\R^N}u(-\Delta)^s\phi\dd x,
\qquad \phi\in C_c^\infty(\Omega).
\]
The pairing is finite because \(( -\Delta)^s\phi(x)\) decays as
\((1+|x|)^{-N-2s}\). Accordingly, \(( -\Delta)^su=g\) in
\(\mathcal D'(\Omega)\) means that
\[
\langle(-\Delta)^su,\phi\rangle
=\int_\Omega g\phi\dd x
\qquad \forall\phi\in C_c^\infty(\Omega).
\]
By \eqref{eq:energy-distribution-pairing}, the weak and distributional
formulations agree locally away from the singular set.

\begin{definition}\label{def:removable-singularity}
Let \(u\) be a weak solution of
\[
(-\Delta)^s u=f(u)\qquad\text{in }\Omega\setminus\{0\}
\]
in the sense of Definition~\ref{def:punctured-weak-solution}. The singularity
at the origin is called \emph{removable} if there exists
\[
\widetilde u\in W^{s,2}_{\loc}(\R^N)\cap L_s^1(\R^N)\cap C(\R^N)
\]
such that \(\widetilde u=u\) almost everywhere in \(\R^N\setminus\{0\}\) and
\[
\mathcal E(\widetilde u,\phi)
=
\int_\Omega f(\widetilde u)\phi\dd x
\qquad
\forall\phi\in C_c^\infty(\Omega).
\]
Otherwise the isolated singularity is called \emph{non-removable}.
\end{definition}

\subsection{Fractional normal derivative and boundary quotient}

Let \(\Omega\subset\R^N\) be of class \(C^2\), set
\(\delta(x):=\dist(x,\R^N\setminus\Omega)\), and denote by \(\eta(x_0)\)
the outer unit normal at \(x_0\in\partial\Omega\). Whenever the limit exists,
we define the fractional normal derivative by
\begin{equation}\label{eq:fractional-normal-derivative}
(\partial_\eta)^s u(x_0)
:=
-\lim_{t\to0^+}\frac{u(x_0-t\eta(x_0))}{t^s}.
\end{equation}
If the quotient \(u/\delta^s\) extends continuously to the boundary, then the
tubular-neighborhood representation of the distance function gives
\[
(\partial_\eta)^s u(x_0)
=-\left.\frac{u}{\delta^s}\right|_{\partial\Omega}(x_0).
\]
Thus the overdetermined condition in \eqref{eq:main-problem} is equivalent to
prescribing the constant boundary trace \(u/\delta^s=-c\). The required
continuity of this quotient in the presence of the interior pole is established
in Section~\ref{sec:boundary-quotient} by reducing, through a cutoff argument,
to the boundary regularity theorem of Ros-Oton--Serra
\cite[Theorem 1.2]{RosOtonSerra}.

\subsection{Capacity and moving-plane framework}

\paragraph{Capacity cutoffs.}
Since \(N>2s\), points have zero fractional \(H^s\)-capacity. More
precisely, for a compact set \(E\subset\mathbb R^N\) we use the notion of capacity
\[
\operatorname{Cap}_s(E)
:=
\inf\left\{
\|\varphi\|_{H^s(\mathbb R^N)}^2:
\varphi\in C_c^\infty(\mathbb R^N),
\varphi\geq1\ \text{in a neighborhood of }E
\right\}.
\]
We record the explicit cutoffs that will be used below.  Fix
\(\eta\in C_c^\infty(B_2)\) such that
\[
0\leq\eta\leq1,
\qquad
\eta\equiv1\quad\text{in }B_1,
\]
and, for \(a\in\mathbb R^N\), define
\[
\eta_{a,\varepsilon}(x)
:=
\eta\!\left(\frac{x-a}{\varepsilon}\right),
\qquad
\psi_{a,\varepsilon}:=1-\eta_{a,\varepsilon}.
\]
Then
\begin{equation}\label{eq:capacity-cutoffs}
\begin{gathered}
0\leq\psi_{a,\varepsilon}\leq1,
\qquad
\psi_{a,\varepsilon}=0\quad\text{in }B_\varepsilon(a),
\qquad
\psi_{a,\varepsilon}\longrightarrow1
\quad\text{a.e. in }\mathbb R^N,
\\
\|\eta_{a,\varepsilon}\|_{L^2(\mathbb R^N)}^2
=
\varepsilon^N\|\eta\|_{L^2(\mathbb R^N)}^2,
\qquad
[\eta_{a,\varepsilon}]_{H^s(\mathbb R^N)}^2
=
\varepsilon^{N-2s}[\eta]_{H^s(\mathbb R^N)}^2
\longrightarrow0.
\end{gathered}
\end{equation}
In particular,
\(\|\eta_{a,\varepsilon}\|_{H^s(\mathbb R^N)}\to0\), which proves
\(\operatorname{Cap}_s(\{a\})=0\).  Notice that
\(\psi_{a,\varepsilon}\) is not compactly supported.  This is harmless in our
applications because it is always multiplied by a function supported in a
bounded domain.  Moreover,
\[
\psi_{a,\varepsilon}(x)-\psi_{a,\varepsilon}(y)
=-
\bigl(\eta_{a,\varepsilon}(x)-\eta_{a,\varepsilon}(y)\bigr),
\]
so every commutator error involving differences of
\(\psi_{a,\varepsilon}\) is controlled by the seminorm in
\eqref{eq:capacity-cutoffs}.

\paragraph{Reflection notation.}
We next fix the notation for the moving-plane procedure. For \(\lambda<0\), the original pole lies outside the moving half-space, whereas the reflected pole \(0^\lambda\) may lie inside it. For \(\lambda\in\R\), let
\[
T_\lambda:=\{x\in\R^N:x_1=\lambda\},
\quad
\Sigma_\lambda:=\{x\in\R^N:x_1<\lambda\},
\quad
\Omega_\lambda:=\Omega\cap\Sigma_\lambda,
\quad \Omega'_{\lambda} := R_{\lambda}(\Omega_{\lambda}).
\]
The reflection of \(x=(x_1,x_2,\dots,x_N)\) with respect to \(T_\lambda\) is
\[
x^\lambda := R_{\lambda}(x) =(2\lambda-x_1,x_2,\dots,x_N).
\]
We set
\[
u_\lambda(x):=u(x^\lambda),
\qquad
w_\lambda:=u_\lambda-u.
\]
The singular point \(0\) is reflected into
\[
0^\lambda=(2\lambda,0,\dots,0).
\]
Thus \(w_\lambda\) is naturally defined on
\[
\mathbb R^N\setminus\{0,0^\lambda\}.
\]
For \(\lambda<0\), the original pole lies outside \(\Sigma_\lambda\), so
pointwise statements in the moving half-space are understood on
\(\Sigma_\lambda\setminus\{0^\lambda\}\).

\paragraph{Antisymmetric supersolutions.}
For later use, we recall the local energy class employed by Fall--Jarohs.  If \(D\subset\mathbb R^N\) is bounded and open, we write
\[
\mathcal D^s(D)
:=
\left\{
v:\mathbb R^N\to\mathbb R\ \text{measurable}:
\mathcal E(v,\phi)\ \text{is finite for every }\phi\in H_0^s(D)
\right\}.
\]
The class \(\mathcal D^s(D)\) is used only to formulate weak supersolution
properties in the sense of Fall--Jarohs.
When $v \notin H^{s}(\R^{N})$, the pairing is interpreted by localizing around $\operatorname{supp} \phi$: the local part is controlled by the $H^{s}$-energy of $v$, while the interaction with the complement is controlled by its tail.

The comparison arguments below use the weak and strong maximum principles for
antisymmetric supersolutions, together with the fractional Hopf and corner
lemmas of Fall--Jarohs \cite{FallJarohs}. The small-domain principle is
triggered by the spectral condition
\begin{equation}\label{eq:FJ-spectral-condition}
\|c^+\|_{L^\infty(D)}<\lambda_1(D),
\end{equation}
with
\[
\lambda_1(D) :=
\inf_{v\in H^s_0(D)\setminus\{0\}}
\frac{\mathcal E(v,v)}{\int_D v^2\dd x},
\]
where \(\lambda_{1}(D)\) is the first Dirichlet eigenvalue of \((-\Delta)^{s}\) on \(D\).
We will also use the following estimate, see \cite[Section 2]{FallJarohs}:
\begin{equation}\label{eq:eigenvalue-measure-bound}
\lambda_1(D)\ge \alpha_{N,s}|D|^{-2s/N},
\end{equation}
where \(\alpha_{N,s} = k_{N,s} \frac{N}{2s}
|B_{1}(0)|^{1+\frac{2s}{N}}\), and \(k_{N,s}\) is given in
\eqref{eq:operator-normalization}. Thus \(\lambda_1(D)\to+\infty\) as
\(|D|\to0\), which is precisely what is needed to enforce
\eqref{eq:FJ-spectral-condition} on sufficiently small sets.

\subsection{Explicit formulas in balls}

We record the explicit formulas in balls used later. For \(B_R=B_R(0)\), the fractional torsion function is
\[
\tau_{R}(x)=\gamma_{N,s}(R^2-|x|^2)^s_+,
\qquad
\gamma_{N,s}
=
\frac{\Gamma(N/2)}{2^{2s}\Gamma(N/2+s)\Gamma(1+s)}.
\]
For \(N>2s\), the Green function of the restricted fractional Laplacian in \(B_R\) is given by the Boggio formula; see, for instance, \cite{AbatangeloJarohsSaldana, Bucur}:
\[
G_{R}(x,y)=
c_{N,s}|x-y|^{2s-N}
\int_0^{\rho_R(x,y)}
\frac{t^{s-1}}{(1+t)^{N/2}}\dd t, \qquad c_{N,s}=
\frac{\Gamma\left(N/2\right)}
{2^{2s}\pi^{N/2}\Gamma(s)^2}
\]
where
\[
\rho_R(x,y)=
\frac{(R^2-|x|^2)(R^2-|y|^2)}
{R^2|x-y|^2}.
\]
With the normalization \eqref{eq:operator-normalization}, this Green function
satisfies
\[
(-\Delta)^sG_R(\cdot,y)=\delta_y
\quad\text{in }\mathcal D'(B_R),
\qquad
G_R(\cdot,y)=0
\quad\text{in }\mathbb R^N\setminus B_R.
\]
In particular,
\[
G_{R}(x,0)
=
c_{N,s}|x|^{2s-N}
\int_0^{(R^2-|x|^2)/|x|^2}
\frac{t^{s-1}}{(1+t)^{N/2}}\dd t.
\]
We stress that the constants \(k_{N,s}\) and \(c_{N,s}\) have different roles: \(k_{N,s}\) normalizes the operator and the energy form, whereas \(c_{N,s}\) is the coefficient in the Boggio representation of the Green function.

\section{Regularity}
\label{sec:boundary-quotient}

The aim of this section is to justify rigorously the boundary quotient used in
\eqref{eq:fractional-normal-derivative}. The difficulty is that the solution
appearing in \eqref{eq:main-problem} need not be bounded near the interior
pole, whereas the boundary regularity theorem of Ros-Oton--Serra \cite[Theorem 1.2]{RosOtonSerra} applies to energy solutions with bounded right-hand side. We therefore cut off the pole,
prove that the truncated function solves a global Dirichlet problem with
bounded right-hand side, and then transfer the regularity back to the
original solution in the boundary neighborhood where the cutoff is identically one.

We first establish local regularity on compact sets separated from both the
boundary and the pole.

\begin{lemma}
\label{lem:interior-holder}
Let \(0<s<1\), \(N>2s\), \(R>0\), and
\(B_{4R}(x_0)\Subset\Omega\setminus\{0\}\). Assume that
\(u\in L^1_s(\mathbb R^N)\) and that \((-\Delta)^s u=g\) in
\(B_{4R}(x_0)\) in the sense of distributions, with
\(g\in L^\infty(B_{4R}(x_0))\). Then
\(u\in C^\beta(B_R(x_0))\) for every
\(0<\beta<\min\{2s,1\}\). In particular, one may choose
\(\beta>2s-1\).
\end{lemma}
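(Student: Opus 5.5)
The plan is to reduce to a known interior Hölder estimate for distributional solutions with tail control. The standard result (Silvestre's regularity for the fractional Laplacian, or Ros-Oton--Serra's interior estimates) says the following. If \(v\in L^1_s(\R^N)\) solves \((-\Delta)^s v=h\) in \(B_{2}\) in the sense of distributions with \(h\in L^\infty(B_2)\), then for every \(\beta<\min\{2s,1\}\),
\[
\|v\|_{C^\beta(B_{1/2})}\le C\bigl(\|v\|_{L^\infty(B_1)}+\|v\|_{L^1_s(\R^N)}+\|h\|_{L^\infty(B_2)}\bigr).
\]
The difficulty is that \(u\) is only known to be in \(L^1_s\) (and continuous away from the pole), so we must first secure local boundedness. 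This is also the step I expect to be the main obstacle, though it is mild here. The hypotheses of Theorem~\ref{thm:main} give \(u\in C(\R^N\setminus\{0\})\), and \(\overline{B_{4R}(x_0)}\) is a compact set avoiding the pole, so \(u\in L^\infty(B_{2R}(x_0))\) directly.

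If one wants the lemma purely under the stated hypotheses (\(u\in L^1_s\) only), I would instead argue by mollification. Set \(u_\varepsilon=u*\rho_\varepsilon\), which satisfies \((-\Delta)^s u_\varepsilon=g*\rho_\varepsilon\) classically in \(B_{3R}(x_0)\). Then apply the local boundedness estimate
\[
\sup_{B_{2R}}|u_\varepsilon|\le C\bigl(R^{-N}\|u_\varepsilon\|_{L^1(B_{3R})}+R^{2s}\,\mathrm{Tail}(u_\varepsilon)+R^{2s}\|g\|_{L^\infty}\bigr).
\]
This can be obtained by splitting \(u_\varepsilon=v_1+v_2\), where \(v_1\) solves the Dirichlet problem in \(B_{3R}\) with datum \(u_\varepsilon\mathbf 1_{\R^N\setminus B_{3R}}\), handled via the Poisson kernel of the ball, and \(v_2\) is the torsion-type part, bounded by \(\|g\|_\infty\tau_{3R}\). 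The bounds are uniform in \(\varepsilon\) since \(L^1_s\) norms and \(L^1_{\loc}\) norms are stable under mollification.

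With boundedness in hand, translate and rescale. Set \(v(y)=u(x_0+Ry)\), which solves \((-\Delta)^s v=R^{2s}g(x_0+R\,\cdot)\) in \(B_4\). Applying the interior estimate (iterated on finitely many balls covering \(B_1\) if needed) gives \(v\in C^\beta(B_1)\), hence \(u\in C^\beta(B_R(x_0))\), for every \(\beta<\min\{2s,1\}\). The distributional formulation is exactly the one required by the cited estimates, since \(u\in L^1_s\) makes \(\int u(-\Delta)^s\phi\) meaningful. In the mollified route, the uniform \(C^\beta\) bounds on \(u_\varepsilon\) pass to the limit by Arzelà--Ascoli, and the limit coincides with \(u\) a.e.

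Finally, the claim that one may choose \(\beta>2s-1\) follows from the inequality \(2s-1<\min\{2s,1\}\), valid for all \(s\in(0,1)\): if \(s\le 1/2\) then \(2s-1\le 0\) and any \(\beta>0\) works, while if \(s>1/2\) then \(2s-1<1=\min\{2s,1\}\). Thus the interval \((\max\{0,2s-1\},\min\{2s,1\})\) is nonempty.
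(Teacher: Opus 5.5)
Your argument is correct where the paper actually uses this lemma, namely Proposition~\ref{prop:boundary-quotient} and Lemma~\ref{lem:interior-bootstrap}. There \(u\in C(\R^N\setminus\{0\})\), so \(u\) is bounded on \(\overline{B_{4R}(x_0)}\). It is, however, a different route from the paper's.

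The paper never needs a local \(L^\infty\) bound. It argues as follows:
\begin{itemize}
\item It sets \(G=\eta g\), with \(\eta\) a cutoff equal to one on \(B_{2R}(x_0)\), and writes \(u=P+H\) with \(P=\Phi_s*G\) the Riesz potential. The potential \(P\) is explicitly \(C^\beta\) for all \(\beta<\min\{2s,1\}\), with a logarithmic loss at \(2s=1\).
\item The remainder \(H=u-P\in L^1_s(\R^N)\) is distributionally \(s\)-harmonic in \(B_{2R}(x_0)\). It is therefore real-analytic there by the very-weak regularity theorem of Carbotti--Cito--La Manna--Pallara.
\end{itemize}
This proves the lemma exactly as stated, for a merely \(L^1_s\) distributional solution. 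The cost is using the fundamental solution \(a_{N,s}|x|^{2s-N}\) (hence \(N>2s\)) and the analyticity theorem.

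Your route replaces this with the interior H\"older estimate in terms of the local \(L^\infty\) norm and the weighted tail. It is quantitative and does not use \(N>2s\), but it needs the local bound as input. Also, such estimates are a priori estimates for smooth (or pointwise) solutions. So the mollification and Arzel\`a--Ascoli step is needed in your main route too, not only in the fallback. It is harmless there, since \(\sup_{B_{3R}(x_0)}|u_\varepsilon|\le\sup_{B_{4R}(x_0)}|u|\) for \(\varepsilon<R\).

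For the lemma as literally stated (no continuity), your fallback has a gap in deriving the local bound. With the splitting on the fixed ball \(B_{3R}(x_0)\), the Poisson part only satisfies
\[
|v_1(x)|\le C\int_{|y-x_0|>3R}\frac{R^{2s}\,|u_\varepsilon(y)|}{(|y-x_0|^2-9R^2)^s\,|x-y|^N}\,dy ,
\qquad x\in B_{2R}(x_0),
\]
and the weight blows up like \((|y-x_0|-3R)^{-s}\) as \(y\) approaches the sphere from outside. Such a term is not controlled, uniformly in \(\varepsilon\), by an \(L^1\) norm plus the tail. An \(L^1_{\loc}\) function can have infinite integral against this weight, and nothing in your a priori information rules this out near \(\partial B_{3R}(x_0)\). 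Note also that this piece only sees values of \(u_\varepsilon\) outside \(B_{3R}(x_0)\), not the \(L^1(B_{3R})\) norm you wrote.

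The standard repair is to average the Poisson--Green representation over radii \(r\in(5R/2,3R)\), or equivalently to average the fractional mean value formula in the radius. Since \(\int(\rho^2-r^2)_+^{-s}\,dr<\infty\) for \(s<1\), the averaged kernel is bounded by \(CR^{-N}\) on \(B_{3R}(x_0)\) and by \(CR^{2s}|y-x_0|^{-N-2s}\) outside it. This yields exactly the estimate you stated, uniformly in \(\varepsilon\). With this change the fallback works too; alternatively, the paper's decomposition avoids the boundedness step altogether.
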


\begin{proof}
Choose \(\eta\in C_c^\infty(B_{3R}(x_0))\), with
\(\eta\equiv1\) in \(B_{2R}(x_0)\), and extend \(G:=\eta g\) by zero. Let
\[
\Phi_s(x)=a_{N,s}|x|^{2s-N},
\qquad
(-\Delta)^s\Phi_s=\delta_0
\quad\text{in }\mathcal D'(\mathbb R^N),
\]
where \(a_{N,s}>0\); see
\cite[Theorem 2.3 and Proposition 2.4]{Bucur}, together with
\cite[Theorem 1.1]{Kwasnicki} for the equivalence between the
singular-integral, Fourier, and distributional definitions of \((-\Delta)^{s}\). Set
\[
P:=\Phi_s*G.
\]
The kernel is locally integrable, so \(P\) is well defined and
\[
(-\Delta)^sP=G
\quad\text{in }\mathcal D'(\mathbb R^N).
\]
Standard estimates for the Riesz potential give, for
\(x,x'\in B_{2R}(x_0)\) and \(d=|x-x'|\in(0,1)\),
\[
|P(x)-P(x')|
\leq C\|G\|_\infty
\begin{cases}
d^{2s},&2s<1,\\
d(1+|\log d|),&2s=1,\\
d,&2s>1.
\end{cases}
\]
Thus \(P\in C^\beta(B_{2R}(x_0))\) for every
\(0<\beta<\min\{2s,1\}\).

Now set \(H:=u-P\). Since \(G=g\) in \(B_{2R}(x_0)\),
\[
(-\Delta)^sH=0
\quad\text{in }B_{2R}(x_0)
\]
in the distributional sense. Moreover, \(H\in L^1_s(\mathbb R^N)\). Indeed,
\(u\in L^1_s(\mathbb R^N)\), while
\[
|P(x)|\leq C|x|^{2s-N}
\qquad\text{for large }|x|
\]
gives \(P\in L^1_s(\mathbb R^N)\). After translation and rescaling, the local
regularity theorem for very weak \(s\)-harmonic functions
\cite[Main Theorem, item (2)]{CarbottiCitoLaMannaPallara} shows that \(H\) is
real-analytic in \(B_{2R}(x_0)\). Hence
\(u=P+H\in C^\beta(B_R(x_0))\) for every
\(0<\beta<\min\{2s,1\}\), and such a \(\beta\) can be chosen larger than
\(2s-1\).
\end{proof}

We next replace the unbounded singular solution by a bounded truncated
function that agrees with it near the boundary. Lemma~\ref{lem:interior-holder} controls the cutoff commutator in the compact
intermediate region.

\begin{proposition}
\label{prop:boundary-quotient}
Let \(\Omega\subset\mathbb R^N\) be a bounded open set of class \(C^2\), with
\(0\in\Omega\), and set \(d_0:=\operatorname{dist}(0,\partial\Omega)>0\).
Let \(F\in L^\infty_{\loc}(\overline\Omega\setminus\{0\})\), meaning that
\(F\in L^\infty(K\cap\Omega)\) for every compact set
\(K\subset\overline\Omega\setminus\{0\}\). Assume that
\(u\in L^1_s(\mathbb R^N)\cap
W^{s,2}_{\mathrm{loc}}(\Omega\setminus\{0\})\cap
C(\mathbb R^N\setminus\{0\})\) satisfies, in the weak sense,
\[
(-\Delta)^s u=F \quad\text{in }\Omega\setminus\{0\},
\qquad
u=0 \quad\text{in }\mathbb R^N\setminus\Omega.
\]
Set \(\delta(x):=\operatorname{dist}(x,\mathbb R^N\setminus\Omega)\). Then
there exist \(\rho>0\), \(\alpha\in(0,1)\), and
\(\psi\in C^\alpha(\overline{U_\rho})\), where
\(U_\rho:=\{x\in\Omega:\delta(x)<\rho\}\), such that
\(u=\delta^s\psi\) in \(U_\rho\). In particular, \(u/\delta^s\) is
H\"older continuous in a neighborhood of \(\partial\Omega\).
\end{proposition}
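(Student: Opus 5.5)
We cut off the pole and reduce to a global Dirichlet problem with bounded right-hand side, to which \cite[Theorem 1.2]{RosOtonSerra} applies. Fix radii \(0<r_1<r_2<d_0/2\) and a cutoff \(\zeta\in C_c^\infty(B_{r_2})\) with \(0\le\zeta\le1\) and \(\zeta\equiv1\) on \(B_{r_1}\). Set \(v:=(1-\zeta)u\). Then \(v=u\) outside \(B_{r_2}\), and \(v=0\) in \(\R^N\setminus\Omega\). Since \(u\in W^{s,2}_{\loc}(\Omega\setminus\{0\})\cap C(\R^N\setminus\{0\})\) and \(v\) vanishes near the pole, \(v\) is bounded. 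To place \(v\) in \(H_0^s(\Omega)\), we also need \(W^{s,2}\)-control of \(u\) up to \(\partial\Omega\). This is supplied by the hypothesis \(u\in W^{s,2}_{\loc}(\R^N\setminus\{0\})\) of Theorem~\ref{thm:main}. In the generality of the proposition, we would instead use the fact that \(u\) is continuous, vanishes outside \(\Omega\), and is an energy solution near \(\partial\Omega\); if necessary, we would read the local space as \(W^{s,2}_{\loc}(\R^N\setminus\{0\})\).

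We then compute \((-\Delta)^s v\) weakly in \(\Omega\). For \(\phi\in C_c^\infty(\Omega)\) we split \(\phi=\phi\chi+\phi(1-\chi)\), where \(\chi\in C_c^\infty(B_{r_2'})\) with \(r_2<r_2'<d_0\) and \(\chi\equiv1\) on \(B_{r_2}\).

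On \(\Omega\setminus\overline{B_{r_2}}\), the function \(v\) coincides with \(u\) locally. Thus \((-\Delta)^s v = F + (-\Delta)^s(-\zeta u)\) there. Since \(\zeta u\) is supported in \(B_{r_2}\) and is in \(L^1\) (because \(u\in L^1_s\)), at distance from the point of evaluation it contributes the tail term
\[
k_{N,s}\int_{B_{r_2}}\frac{\zeta(y)u(y)}{|x-y|^{N+2s}}\dd y.
\]
This term is bounded, in fact smooth, for \(x\) outside \(B_{r_2'}\).

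In the annulus \(B_{r_2'}\setminus B_{r_1/2}\), which is compactly contained in \(\Omega\setminus\{0\}\), we write
\[
(-\Delta)^s v=(1-\zeta)F-u(-\Delta)^s\zeta+\mathcal C(\zeta,u)+\text{tail of }\zeta u\text{ near }0,
\]
where
\[
\mathcal C(\zeta,u)(x)=k_{N,s}\PV\int\frac{(\zeta(x)-\zeta(y))(u(x)-u(y))}{|x-y|^{N+2s}}\dd y .
\]
Lemma~\ref{lem:interior-holder}, applied on finitely many balls covering a compact neighborhood of the annulus (with \(g=F\) bounded there), gives \(u\in C^\beta\) with \(\beta>2s-1\). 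Combined with \(\zeta\in C^{1}\), this makes the near-diagonal part of \(\mathcal C\) absolutely convergent and bounded, since the integrand is \(O(|x-y|^{1+\beta-N-2s})\). The far part is bounded by \(\|u\|_{L^1_s}\). In \(B_{r_1/2}\) the function \(v\) vanishes, and its equation is not needed once we take the bounded extension. Altogether \((-\Delta)^s v=\tilde F\) weakly in \(\Omega\), with \(\tilde F\in L^\infty(\Omega)\), because \(F\) is bounded on \(\overline\Omega\setminus B_{r_1/2}\).

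Rigorously, we verify the identity \(\mathcal E(v,\phi)=\int\tilde F\phi\) by expanding \((1-\zeta)u\) in the bilinear form and using the weak equation for \(u\) with test function \((1-\zeta)\phi\). This test function is admissible after approximation, since it is supported away from \(0\). Applying \cite[Theorem 1.2]{RosOtonSerra} to \(v\) then gives \(v/\delta^s\in C^\alpha(\overline\Omega)\). Taking \(\rho<d_0-r_2\), we have \(v=u\) on \(U_\rho\), so \(\psi:=v/\delta^s\) has the required properties.

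The main obstacle is the commutator estimate: showing that \(\tilde F\) is genuinely bounded near \(\partial B_{r_1}\) and \(\partial B_{r_2}\). This is exactly where the Hölder exponent \(\beta>2s-1\) from Lemma~\ref{lem:interior-holder} is used. A secondary issue is the justification of the weak identity with the non-smooth product test functions, which we handle by density in \(H^s_0\) of the relevant annular region.
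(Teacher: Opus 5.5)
Your derivation of the bounded right-hand side follows the paper's: cutoff of the pole, a smooth tail term in the collar, the commutator bounded via $\beta>2s-1$ from Lemma~\ref{lem:interior-holder}, and the far part controlled by $L^1_s$.

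The gap, which you notice yourself, is the step placing $v=(1-\zeta)u$ in $H^s_0(\Omega)$, the class in which \cite[Theorem 1.2]{RosOtonSerra} is formulated. The proposition only assumes $u\in W^{s,2}_{\loc}(\Omega\setminus\{0\})$, which gives no Sobolev control on any neighborhood of a point of $\partial\Omega$. The claim that $u$ ``is an energy solution near $\partial\Omega$'' is exactly what is not available. Rereading the hypothesis as $W^{s,2}_{\loc}(\mathbb R^N\setminus\{0\})$ proves a weaker statement. Under that stronger hypothesis your argument is complete and more direct than the paper's: $v\in H^s_0(\Omega)$ by localization, the weak identity extends by density to $H^s_0(\Omega)$, and Ros-Oton--Serra applies.

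The missing idea is to obtain the energy class a posteriori by uniqueness, using continuity rather than energy near $\partial\Omega$.

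First, your identity $\mathcal E(v,\phi)=\int_\Omega\tilde F\phi$ for $\phi\in C_c^\infty(\Omega)$ makes sense without global energy; by localization it equals $\int v(-\Delta)^s\phi$. So $(-\Delta)^s v=\tilde F$ in $\mathcal D'(\Omega)$.

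Second, let $\tilde v\in H^s_0(\Omega)$ be the Lax--Milgram solution with datum $\tilde F$. Ros-Oton--Serra gives $\tilde v/\delta^s\in C^\alpha(\overline\Omega)$ and $\tilde v\in C^s(\mathbb R^N)$.

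Third, the difference $z:=v-\tilde v$ is continuous on $\mathbb R^N$, because $u\in C(\mathbb R^N\setminus\{0\})$ vanishes outside $\Omega$. It is zero outside $\Omega$ and distributionally $s$-harmonic in $\Omega$. Very-weak interior regularity makes $z$ smooth and pointwise $s$-harmonic there. Evaluating $(-\Delta)^s z$ at a positive interior maximum (or negative minimum) then forces $z\equiv0$, so $v=\tilde v$ inherits the boundary regularity.

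Two smaller points. First, the equation is needed in $B_{r_1/2}$ as well, where its right-hand side is the bounded term $-k_{N,s}\int v(y)|x-y|^{-N-2s}\,dy$. Your expansion against arbitrary $\phi\in C_c^\infty(\Omega)$ does produce this term, despite your remark that it is ``not needed''. Second, since $\Omega$ need not be connected, Ros-Oton--Serra should be applied componentwise; the interaction between components is smooth.
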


\begin{proof}
We remove the pole by a cutoff, but we keep track of the equation satisfied by
that cutoff function on the whole domain.  Choose \(r>0\) such that
\(4r<d_0\), and let \(\chi\in C^\infty(\mathbb R^N)\) satisfy
\[
0\leq\chi\leq1,
\qquad
\chi=0\ \text{in }B_r,
\qquad
\chi=1\ \text{in }\mathbb R^N\setminus B_{2r}.
\]
Set
\[
w:=\chi u.
\]
Since the cutoff removes the only singular point, while \(u\) is continuous on
\(\mathbb R^N\setminus\{0\}\) and vanishes outside the bounded set \(\Omega\),
we have
\[
w\in L^\infty(\mathbb R^N)\cap C(\mathbb R^N)\cap L_s^1(\mathbb R^N),
\qquad
w=0\ \text{in }\mathbb R^N\setminus\Omega.
\]
We claim that
\begin{equation}\label{eq:truncated-global-equation}
(-\Delta)^s w=G\quad\text{in }\Omega,
\qquad G\in L^\infty(\Omega),
\end{equation}
in the weak sense.  We prove the claim by identifying the equation
of \(w\) separately near the boundary, in an intermediate region, and near the
pole, and then patching the three identities.

Choose \(\rho>0\) so small that
\[
\overline{U_\rho}\cap\overline{B_{2r}}=\emptyset,
\qquad
U_\rho:=\{x\in\Omega:\delta(x)<\rho\}.
\]
In particular, \(\chi\equiv1\) and hence \(w=u\) in \(U_\rho\).

\smallskip
\noindent\emph{Step 1: the boundary collar.}
Let \(\phi\in C_c^\infty(U_\rho)\) and set \(S:=\supp\phi\).  Since
\(S\Subset\Omega\setminus\{0\}\), the function \(\phi=\chi\phi\) is an
admissible test function in the equation for \(u\), and therefore
\[
\mathcal E(u,\chi\phi)=\mathcal E(u,\phi)
=\int_\Omega F\phi\,\dd x.
\]
We now compute explicitly the difference between the pairings with \(w=\chi u\)
and with \(u\).  For \(\varepsilon>0\), let
\[
\mathcal E_\varepsilon(a,b)
:=\frac{k_{N,s}}2
\iint_{|x-y|>\varepsilon}
\frac{(a(x)-a(y))(b(x)-b(y))}{|x-y|^{N+2s}}\,\dd x\,\dd y.
\]
Thus \(\mathcal E_\varepsilon\) is the energy pairing with the kernel truncated
at distance \(\varepsilon\).  Expanding the two numerators gives
\begin{align*}
&\bigl(\chi(x)u(x)-\chi(y)u(y)\bigr)
   \bigl(\phi(x)-\phi(y)\bigr)\\
&\quad-
\bigl(u(x)-u(y)\bigr)
   \bigl(\chi(x)\phi(x)-\chi(y)\phi(y)\bigr)\\
&=(\chi(x)-\chi(y))
  \bigl(u(y)\phi(x)-u(x)\phi(y)\bigr).
\end{align*}
Hence, by symmetry of the kernel and by exchanging \(x\) and \(y\) in the
second term,
\begin{align*}
&\mathcal E_\varepsilon(w,\phi)
 -\mathcal E_\varepsilon(u,\chi\phi)\\
&\qquad =k_{N,s}\int_{\mathbb R^N}\phi(x)
 \int_{|x-y|>\varepsilon}
 \frac{(\chi(x)-\chi(y))u(y)}{|x-y|^{N+2s}}\,\dd y\,\dd x.
\end{align*}
If \(\phi(x)\neq0\), then \(x\in S\subset U_\rho\) and
\(\chi(x)=1\).  Moreover, \(1-\chi\) is supported in \(B_{2r}\), while
\(S\) has positive distance from \(B_{2r}\).  Thus the last integral has no
singularity at \(y=x\) and, for \(\varepsilon\) sufficiently small, the condition $|x-y|> \varepsilon$ is automatically satisfied on the support of the integrand.
Passing to the limit gives
\[
\mathcal E(w,\phi)-\mathcal E(u,\chi\phi)
=
 k_{N,s}\int_{U_\rho}\phi(x)
 \int_{\mathbb R^N}
 \frac{(1-\chi(y))u(y)}{|x-y|^{N+2s}}\,\dd y\,\dd x.
\]
Consequently,
\begin{equation}\label{eq:truncated-collar-equation}
(-\Delta)^s w=F+h\quad\text{in }U_\rho,
\qquad
h(x):=k_{N,s}\int_{\mathbb R^N}
\frac{(1-\chi(y))u(y)}{|x-y|^{N+2s}}\,\dd y.
\end{equation}
This is the precise nonlocal correction produced by cutting off the pole.  To
see that it is bounded, set
\[
d_1:=\dist\bigl(\overline{U_\rho},\overline{B_{2r}}\bigr)>0.
\]
Since \(\supp(1-\chi)\subset B_{2r}\), for every \(x\in U_\rho\),
\[
|h(x)|
\leq k_{N,s}d_1^{-N-2s}
\int_{B_{2r}}|u(y)|\,\dd y.
\]
The last integral is finite because \(u\in L_s^1(\mathbb R^N)\). Thus
\(h\in L^\infty(U_\rho)\).  Moreover,
\(\overline{U_\rho}\subset\overline\Omega\setminus\{0\}\), so the
assumption on \(F\) gives \(F\in L^\infty(U_\rho)\).  Hence
\(F+h\in L^\infty(U_\rho)\).

\smallskip
\noindent\emph{Step 2: the intermediate region.}
Set
\[
D_m:=\{x\in\Omega:|x|>r/2,\ \delta(x)>\rho/2\}.
\]
Then \(\overline{D_m}\Subset\Omega\setminus\{0\}\).  Choose an open set
\(D'\) such that
\[
\overline{D_m}\cup\supp\nabla\chi\Subset D'
\Subset\Omega\setminus\{0\}.
\]
On balls compactly contained in \(D'\), the weak equation for \(u\) implies
the corresponding distributional identity and the right-hand side is bounded.
By Lemma~\ref{lem:interior-holder}, after a finite covering of
\(\overline{D'}\), there exists
\[
\beta>\max\{0,2s-1\}
\]
such that \(u\in C^\beta(\overline{D'})\).

Let \(\phi\in C_c^\infty(D_m)\).  Repeating the truncated-kernel computation
above, but without using \(\chi \equiv 1\) on \(\supp\phi\), yields
\begin{equation}\label{eq:differenza}
\mathcal E_\varepsilon(w,\phi)-\mathcal E_\varepsilon(u,\chi\phi)
=k_{N,s}\int_{\mathbb R^N}\phi(x)
\int_{|x-y|>\varepsilon}
\frac{(\chi(x)-\chi(y))u(y)}{|x-y|^{N+2s}}\,\dd y\,\dd x.
\end{equation}
For \(x\in\supp\phi\), write
\[
u(y)=u(x)-(u(x)-u(y)).
\]
Then the inner integral of \eqref{eq:differenza} becomes
\begin{equation}\label{eq:commutatore}
\begin{split}
&u(x)k_{N,s}\PV\int_{\mathbb R^N}
\frac{\chi(x)-\chi(y)}{|x-y|^{N+2s}}\,\dd y\\
&\quad-k_{N,s}\PV\int_{\mathbb R^N}
\frac{(\chi(x)-\chi(y))(u(x)-u(y))}
{|x-y|^{N+2s}}\,\dd y\\
&=u(x)(-\Delta)^s\chi(x)-\mathcal C_\chi[u](x),
    \end{split}
\end{equation}
where
\[
\mathcal C_\chi[u](x)
:=k_{N,s}\PV\int_{\mathbb R^N}
\frac{(\chi(x)-\chi(y))(u(x)-u(y))}
{|x-y|^{N+2s}}\,\dd y.
\]
Using \eqref{eq:commutatore} and passing to the limit
\(\varepsilon\to0^+\) in \eqref{eq:differenza} therefore gives the product identity
\begin{equation}\label{eq:cutoff-product-identity}
(-\Delta)^s(\chi u)
=
\chi F+u(-\Delta)^s\chi-\mathcal C_\chi[u]
\qquad\text{in }D_m
\end{equation}
in the weak sense.

We verify that the commutator is uniformly bounded in \(D_m\), which is the
only non-immediate term in \eqref{eq:cutoff-product-identity}.  Choose
\(d_*>0\) such that \(B_{d_*}(x)\subset D'\) for every
\(x\in\overline{D_m}\), and split
\[
\mathcal C_\chi[u](x)
=\mathcal C_{\chi,\mathrm{loc}}[u](x)
 +\mathcal C_{\chi,\mathrm{tail}}[u](x)
\]
according to \(B_{d_*}(x)\) and its complement.  For the local part,
\(\chi\in C^1\) and \(u\in C^\beta(D')\) give
\[
|\chi(x)-\chi(y)|\leq C|x-y|,
\qquad
|u(x)-u(y)|\leq C|x-y|^\beta,
\]
and hence, uniformly for \(x\in D_m\),
\[
|\mathcal C_{\chi,\mathrm{loc}}[u](x)|
\leq C\int_0^{d_*}t^{\beta-2s}\,\dd t<\infty,
\]
precisely because \(\beta>2s-1\) by Lemma \ref{lem:interior-holder}.

For the tail part, using boundedness of \(\chi\) and of \(u\) on
\(\overline{D_m}\),
\begin{align*}
|\mathcal C_{\chi,\mathrm{tail}}[u](x)|
&\leq C|u(x)|
\int_{\mathbb R^N\setminus B_{d_*}(x)}
\frac{\dd y}{|x-y|^{N+2s}}\\
&\quad+C\int_{\mathbb R^N\setminus B_{d_*}(x)}
\frac{|u(y)|}{|x-y|^{N+2s}}\,\dd y.
\end{align*}
The first integral is bounded uniformly in \(x\).  For the second one, since
\(D_m\) is bounded, there exists \(C_*>0\), depending only on \(D_m\) and
\(d_*\), such that
\[
\frac1{|x-y|^{N+2s}}
\leq \frac{C_*}{1+|y|^{N+2s}}
\qquad
\text{for }x\in D_m,\quad |x-y|\geq d_*.
\]
Therefore the tail is controlled by
\(\|u\|_{L_s^1(\mathbb R^N)}\), and
\[
\mathcal C_\chi[u]\in L^\infty(D_m).
\]
Since \(F\) and \(u\) are bounded on \(D_m\), and
\(( -\Delta)^s\chi \in L^\infty(\mathbb R^N)\),
we conclude that
\[
G_m:=\chi F+u(-\Delta)^s\chi-\mathcal C_\chi[u]
\in L^\infty(D_m).
\]

\smallskip
\noindent\emph{Step 3: a neighborhood of the pole.}
Let
\[
D_0:=B_{3r/4}(0)\Subset\Omega.
\]
Since \(w=0\) in \(B_r\), if \(x\in D_0\) and \(w(y)\neq0\), then
\(|x-y|\geq r/4\).  Hence no principal value is needed and, for
\(\phi\in C_c^\infty(D_0)\), symmetry of the kernel gives
\begin{align*}
\mathcal E(w,\phi)
&=-k_{N,s}\int_{D_0}\phi(x)
\int_{\mathbb R^N}
\frac{w(y)}{|x-y|^{N+2s}}\,\dd y\,\dd x\\
&=\int_{D_0}G_0(x)\phi(x)\,\dd x,
\end{align*}
where
\[
G_0(x):=-k_{N,s}\int_{\mathbb R^N}
\frac{w(y)}{|x-y|^{N+2s}}\,\dd y.
\]
The separation \(|x-y|\geq r/4\), together with the boundedness and compact
support of \(w\), yields \(G_0\in L^\infty(D_0)\).

\smallskip
\noindent\emph{Step 4: patching the local equations.}
The three open sets just introduced cover \(\Omega\): indeed,
\[
\Omega=D_0\cup D_m\cup U_\rho.
\]
Let \(\zeta_0,\zeta_m,\zeta_\rho\in C^\infty(\Omega)\)
be a partition of unity subordinate to this cover.  For an arbitrary
\(\phi\in C_c^\infty(\Omega)\), set
\[
\phi_0:=\zeta_0\phi,
\qquad
\phi_m:=\zeta_m\phi,
\qquad
\phi_\rho:=\zeta_\rho\phi.
\]
Then \(\phi=\phi_0+\phi_m+\phi_\rho\), with the three terms supported in
\(D_0,D_m,U_\rho\), respectively.  By linearity of the whole-space pairing and
the three local identities,
\begin{align*}
\mathcal E(w,\phi)
&=\int_\Omega G_0\phi_0\,\dd x
 +\int_\Omega G_m\phi_m\,\dd x
 +\int_\Omega(F+h)\phi_\rho\,\dd x\\
&=\int_\Omega G\phi\,\dd x,
\end{align*}
where, after extending the local right-hand sides by zero outside their
respective sets,
\[
G:=\zeta_0G_0+\zeta_mG_m+\zeta_\rho(F+h).
\]
Each local right-hand side is bounded on the corresponding member of the
cover; therefore \(G\in L^\infty(\Omega)\).  This proves
\eqref{eq:truncated-global-equation}.

It remains to place \(w\) in the variational class required by the boundary regularity theorem of
Ros-Oton--Serra \cite[Theorem 1.2]{RosOtonSerra}. Let
\(v\in H_0^s(\Omega)\) be the unique Lax--Milgram solution of
\[
(-\Delta)^s v=G\quad\text{in }\Omega,
\qquad
v=0\quad\text{in }\mathbb R^N\setminus\Omega.
\]
Since a bounded \(C^2\) open set has finitely many connected components with
pairwise separated closures, write \(v_j:=\mathbf1_{\Omega_j}v\). By the same
componentwise identity displayed below in \eqref{eq:component-equation}, with
\(w\) replaced by \(v\), the right-hand side for \(v_j\) is \(G|_{\Omega_j}\)
plus the interaction with the other components, which is smooth on
\(\overline{\Omega_j}\). Thus \cite[Theorem 1.2]{RosOtonSerra} applies to every
\(v_j\).
Since \(\delta\) agrees in \(\Omega_j\) with the distance to
\(\mathbb R^N\setminus\Omega_j\), and \(\Omega\) has finitely many connected
components with pairwise separated closures, we obtain
\[
\frac{v}{\delta^s}\in C^\alpha(\overline\Omega)
\]
for some \(\alpha\in(0,1)\).

Set \(z:=w-v\).  Then \(z\in L_s^1(\mathbb R^N)\cap C(\mathbb R^N)\),
\(z=0\) in \(\mathbb R^N\setminus\Omega\), and
\(( -\Delta)^s z=0\) in \(\mathcal D'(\Omega)\). The interior regularity theorem for
very weak \(s\)-harmonic functions
\cite[Main Theorem, item (2)]{CarbottiCitoLaMannaPallara}
gives
\[
z\in C^\infty_{\loc}(\Omega).
\]
In particular, the equation holds pointwise in \(\Omega\).  If
\(z\) had a positive maximum at an interior point \(x_0\), then the pointwise
formula would give
\[
0=(-\Delta)^s z(x_0)
=k_{N,s}\PV\int_{\mathbb R^N}
\frac{z(x_0)-z(y)}{|x_0-y|^{N+2s}}\,\dd y>0,
\]
since \(z=0\) outside the bounded domain and the maximum is positive.  The
same argument applied to \(-z\) shows that \(z\equiv0\).  Hence
\(w=v\in H_0^s(\Omega)\), and therefore
\[
\frac{w}{\delta^s}\in C^\alpha(\overline\Omega).
\]
Finally, \(w=u\) in \(U_\rho\), so the conclusion follows with
\(\psi=u/\delta^s\) in that collar.
\end{proof}

Proposition~\ref{prop:boundary-quotient} yields only H\"older continuity of
the boundary quotient and therefore does not, by itself, imply
\eqref{eq:TC}.  We first record the elementary relation between the
regularized distance used in the higher-order boundary estimates and the
usual distance to the boundary.

\paragraph{Regularized versus actual distance.}
We shall use the following elementary fact.  If \(\Omega\) is of class
\(C^{2,\alpha}\), \(d\) is a regularized distance in the sense of
\cite{AbatangeloRosOton}, and \(0<\varepsilon<\alpha\), then, in a
sufficiently thin boundary collar,
\begin{equation}\label{eq:regularized-distance-ratio}
\frac d\delta\in C^{1,\varepsilon},
\qquad
0<c\leq\frac d\delta\leq C.
\end{equation}
Indeed, both \(d\) and \(\delta\) are \(C^{2,\varepsilon}\) near the
boundary, vanish on \(\partial\Omega\), and are comparable.  Thus the common first-order vanishing can be factored
out in a normal coordinate.  Fix \(Q\in\partial\Omega\) and, after a rotation
of coordinates, assume that \(\partial_{x_N}\delta\neq0\) in a neighborhood
of \(Q\).  By the inverse function theorem, \((z',t)=(x',\delta(x))\)
defines a \(C^{2,\varepsilon}\) local system of coordinates.  In these
coordinates the boundary is represented by \(\{t=0\}\), while the last
coordinate is exactly the actual distance, \(t=\delta(x)\).

Writing \(x=x(z',t)\) for the inverse coordinate map and setting
\[
D(z',t):=d(x(z',t)),
\]
we have \(D\in C^{2,\varepsilon}\) and \(D(z',0)=0\).  Hence the fundamental
theorem of calculus in the normal variable gives
\[
D(z',t)
=
t\int_0^1\partial_tD(z',\theta t)\,\dd\theta,
\qquad
\frac{D(z',t)}t
=
\int_0^1\partial_tD(z',\theta t)\,\dd\theta.
\]
The right-hand side extends with \(C^{1,\varepsilon}\)-regularity to
\(t=0\), since \(D\in C^{2,\varepsilon}\).  Since \(t=\delta(x)\), this
extension is precisely the quotient \(d/\delta\). A finite covering of \(\partial\Omega\) proves
\eqref{eq:regularized-distance-ratio} in a uniform boundary collar, and the
positive upper and lower bounds follow from the comparability of \(d\) and
\(\delta\).  In particular, \((d/\delta)^s\in C^{1,\varepsilon}\), so
boundary regularity for quotients by \(d^s\) transfers directly to quotients
by \(\delta^s\).

We can now identify the regimes in which the tangential cancellation follows
from higher boundary regularity. To reach the boundary regularity needed for
\eqref{eq:TC}, we first bootstrap the solution on compact sets away from the
pole.

\begin{lemma}
\label{lem:interior-bootstrap}
Assume the hypotheses of Theorem~\ref{thm:main}, except for
\eqref{eq:TC}. Then, for every compact set
\(K_0\Subset\Omega\setminus\{0\}\),
\begin{equation}\label{eq:interior-bootstrap}
u\in C^\theta(K_0)
\qquad\text{for every noninteger }\theta<1+2s.
\end{equation}
\end{lemma}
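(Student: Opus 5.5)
The plan is a standard interior bootstrap in which the right-hand side $f(u)$ gains regularity at each step, starting from Lemma~\ref{lem:interior-holder}. Fix $K_0\Subset\Omega\setminus\{0\}$ and a chain of nested open sets
\[
K_0\Subset V_k\Subset V_{k-1}\Subset\dots\Subset V_0\Subset\Omega\setminus\{0\}.
\]
On $\overline{V_0}$ the function $u$ is continuous, hence bounded with values in a compact interval $[0,M]$. The map $f$ is Lipschitz on $[0,M]$, so $f(u)\in L^\infty(V_0)$. A finite covering by balls $B_{4R}(x_0)\Subset V_0$, together with Lemma~\ref{lem:interior-holder} (the weak equation gives the distributional one by \eqref{eq:energy-distribution-pairing}), yields $u\in C^{\beta}(\overline{V_1})$ for every $\beta<\min\{2s,1\}$.

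Next I would feed this back through the nonlinearity. Since $f$ is locally Lipschitz, $f\circ u\in C^{\gamma}$ on $V_1$ whenever $u\in C^\gamma$ there with $\gamma\le1$. I then split $u=P+H$ on each ball exactly as in the proof of Lemma~\ref{lem:interior-holder}. Here $P=\Phi_s*(\eta f(u))$, and $H$ is $s$-harmonic, so it is real-analytic by \cite{CarbottiCitoLaMannaPallara}. The regularity of $u$ is therefore that of $P$. I would now invoke the Schauder estimate for the Riesz potential: if $G\in C^{\gamma}_c$, then $\Phi_s*G\in C^{\gamma+2s}$ locally, provided $\gamma+2s$ is not an integer. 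This is classical, since $P$ is the Riesz potential $I_{2s}G$ and $I_{2s}$ maps $C^\gamma$ into $C^{\gamma+2s}$. The cutoff $\eta$ is smooth, so $\eta f(u)\in C^\gamma_c$. Each iteration raises the exponent from $\gamma$ to $\gamma+2s$, capped by $\gamma\le1$ because $f$ is only Lipschitz. After finitely many steps (about $1/(2s)$ of them) we reach $u\in C^{1}$, or at least $C^{\gamma}$ for every $\gamma<1$. Then $f(u)\in C^{0,1}$, and one final step gives $u\in C^{\theta}$ for every noninteger $\theta<1+2s$. The shrinkage of domains at each step is harmless because only finitely many steps are needed.

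The main obstacle is integer crossings. When $\gamma+2s$ equals $1$ (or is near it), the Riesz potential estimate degenerates to a log-Lipschitz bound, as already seen in the case $2s=1$ of Lemma~\ref{lem:interior-holder}. I would avoid this by always choosing the current exponent $\gamma$ slightly smaller, so that $\gamma+2s\notin\mathbb Z$. Since the target $\theta<1+2s$ is noninteger and the statement allows every $\theta$ below that threshold, this loses nothing.

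A second subtlety is that the top step needs $f(u)\in C^{0,1}$, which requires $u\in C^{0,1}$. This may be reached only as $u\in C^{1+\epsilon}$ when $2s>$ the remaining gap, or only as $u\in C^{\gamma}$ for all $\gamma<1$. In the latter case $f(u)\in C^{\gamma}$ gives $u\in C^{\gamma+2s}$ for all $\gamma<1$, which is exactly every noninteger $\theta<1+2s$. So the Lipschitz cap on $f$ is precisely what produces the stated threshold $1+2s$.
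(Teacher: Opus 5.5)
Your argument is correct, and its overall structure matches the paper's. Both start from Lemma~\ref{lem:interior-holder}, gain $2s$ per step on a finite chain of shrinking compact sets, lower the exponent slightly to avoid integers, and let the mere Lipschitz regularity of $f$ fix the threshold $1+2s$.

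You differ in how a single bootstrap step is carried out. The paper localizes by $v=\eta u$ and derives $(-\Delta)^s v=f(u)+H_\eta$ in $W$. The tail term $H_\eta$ is shown to be $C^\infty$ by differentiating under the integral, using $u\in L^1_s$. The paper then applies the Ros-Oton--Serra interior Schauder estimate, which requires $v\in C^\gamma(\mathbb R^N)$ globally; this is why the cutoff is introduced.

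You instead iterate the representation already built in Lemma~\ref{lem:interior-holder}, namely $u=\Phi_s*(\eta f(u))+H$. Every nonlocal and far-field contribution, including the pole, is absorbed into the $s$-harmonic remainder $H\in L^1_s$. That remainder is real-analytic by the very weak regularity theorem. So you need neither a tail computation nor any global H\"older control of $u$.

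Your route has two costs. First, it relies on the H\"older mapping property of the Riesz potential, $C^\gamma_c\to C^{\gamma+2s}_{\mathrm{loc}}$ for $\gamma+2s\notin\mathbb N$, which you should cite explicitly. For instance, it follows from the global H\"older estimate for $(-\Delta)^s$, since $P$ is bounded and $(-\Delta)^sP=\eta f(u)\in C^\gamma(\mathbb R^N)$. Second, it uses the explicit fundamental solution, which is available here because $N>2s$. The paper's route treats the Schauder estimate as a black box and would transfer to operators without an explicit kernel. Yours is more self-contained given Lemma~\ref{lem:interior-holder}.

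One simplification of your last paragraph: the dichotomy is unnecessary. Once the current exponent $\gamma<1$ satisfies $\gamma+2s>1$ with $\gamma+2s\notin\mathbb N$, one step yields a noninteger exponent above $1$. Then $u$ is Lipschitz and $f(u)\in C^{\gamma}$ for every $\gamma<1$, exactly as in the paper's final step.
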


\begin{proof}
Fix first \(0<\gamma_0<\min\{2s,1\}\). By
Lemma~\ref{lem:interior-holder}, after slightly reducing \(\gamma_0\) if
necessary, \(u\in C^{\gamma_0}\) on every compact subset of
\(\Omega\setminus\{0\}\).

We first record one bootstrap step. Let
\[
V'\Subset W\Subset V\Subset\Omega\setminus\{0\}
\]
and suppose that \(u\in C^\gamma(\overline V)\) for some
\(0<\gamma<1\). Choose \(\eta\in C_c^\infty(V)\) such that
\(\eta\equiv1\) in an open neighborhood of \(\overline W\), and set
\(v:=\eta u\), with \(v=0\) at the pole. Since
\(\supp\eta\Subset V\) and \(u\in C^\gamma(\overline V)\), the product
extends by zero to a compactly supported function satisfying
\[
v\in C^\gamma(\mathbb R^N).
\]

We next identify the equation satisfied by \(v\) in \(W\). The same
technique used to prove \eqref{eq:truncated-collar-equation} applies here. Indeed, for
\(x\in W\) we have \(\eta(x)=1\), and hence
\[
u(x)-\eta(y)u(y)
=
\bigl(u(x)-u(y)\bigr)
+
(1-\eta(y))u(y).
\]
Therefore, in the weak sense in \(W\),
\begin{equation}\label{eq:localized-bootstrap-equation}
(-\Delta)^s v
=
f(u)+H_\eta,
\end{equation}
where
\[
H_\eta(x)
:=
k_{N,s}\int_{\mathbb R^N}
\frac{(1-\eta(y))u(y)}{|x-y|^{N+2s}}\,\dd y.
\]
No principal value is needed in the definition of \(H_\eta\). Since
\(\eta\equiv1\) in a neighborhood of \(\overline W\),
\[
d:=\dist\bigl(\overline W,\supp(1-\eta)\bigr)>0.
\]
Thus the kernel is nonsingular on
\(\overline W\times\supp(1-\eta)\). More precisely, for every multi-index
\(\alpha\),
\[
\left|D_x^\alpha |x-y|^{-N-2s}\right|
\leq C_\alpha |x-y|^{-N-2s-|\alpha|}.
\]
On bounded sets the right-hand side is uniformly bounded for
\(x\in\overline W\) and \(y\in\supp(1-\eta)\), while for \(|y|\) large,
uniformly in \(x\in\overline W\),
\[
|x-y|^{-N-2s-|\alpha|}
\leq \frac{C_\alpha}{1+|y|^{N+2s}}.
\]
Since \(u\in L_s^1(\mathbb R^N)\), dominated convergence therefore permits
differentiation under the integral sign to arbitrary order and gives
\[
H_\eta\in C^\infty(\overline W).
\]
Since \(f\) is Lipschitz on the compact range of
\(u|_{\overline V}\), it follows that
\[
f(u)+H_\eta\in C^\gamma(\overline W).
\]

We may now use the interior Schauder estimate for the fractional Laplacian
from \cite[Proposition 2.2]{RosOtonSerra}. Since
\(v\in C^\gamma(\mathbb R^N)\) and
\(v=u\) in \(W\), a finite covering of \(\overline{V'}\) yields
\begin{equation}\label{eq:one-schauder-step}
u\in C^{\gamma+2s}(\overline{V'})
\end{equation}
whenever \(\gamma+2s\notin\mathbb N\). If an integer exponent is met, we
first replace \(\gamma\) by any slightly smaller positive exponent.

Choose now a finite chain of nested open sets between \(K_0\) and a compact
subset of \(\Omega\setminus\{0\}\) on which the initial
\(C^{\gamma_0}\)-estimate holds. Repeatedly apply
\eqref{eq:one-schauder-step}, lowering the H\"older exponent by an
arbitrarily small amount whenever necessary to avoid integers. Since each
step gains essentially \(2s>0\), after finitely many steps one obtains
\[
u\in C^{\theta_*}(K_1)
\]
for some \(K_0\Subset K_1\Subset\Omega\setminus\{0\}\) and some
noninteger \(\theta_*>1\). In particular, \(u\) is Lipschitz on \(K_1\).
Therefore \(f(u)\) is Lipschitz there and hence belongs to
\(C^\gamma(K_1)\) for every \(0<\gamma<1\). Given any noninteger
\(\theta<1+2s\), choose \(\gamma\in(0,1)\) such that
\[
\theta<\gamma+2s<1+2s,
\qquad \gamma+2s\notin\mathbb N.
\]
One last application of
\cite[Proposition 2.2]{RosOtonSerra} gives
\(u\in C^{\gamma+2s}(K_0)\), and therefore
\(u\in C^\theta(K_0)\). This proves \eqref{eq:interior-bootstrap}.
\end{proof}

The interior bootstrap allows us to improve the regularity of the truncated
right-hand side and, in the relevant regimes, of the boundary quotient itself.

\begin{lemma}
\label{lem:automatic-boundary-regularity}
Assume the hypotheses of Theorem~\ref{thm:main}, except for
\eqref{eq:TC}, and suppose that \(\Omega\) is of class
\(C^{2,\alpha}\) for some \(\alpha\in(0,1)\). If either \(s>1/2\), or there
exists \(a>0\) such that \(f\) is constant on \([0,a]\), then there exist
\(\rho'>0\) and \(\varepsilon>0\) such that
\[
\frac{u}{\delta^s}\in C^{1,\varepsilon}(\overline{U_{\rho'}}).
\]
\end{lemma}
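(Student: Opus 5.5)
We reduce to the truncated function \(w=\chi u\) from the proof of Proposition~\ref{prop:boundary-quotient}. Recall that \(w\in H_0^s(\Omega)\) solves \((-\Delta)^s w=G\) in \(\Omega\) with \(w=0\) outside \(\Omega\), and that \(w=u\) in the collar \(U_\rho\). We then apply the higher-order boundary regularity of Abatangelo--Ros-Oton \cite{AbatangeloRosOton} in \(C^{2,\alpha}\) domains. In the form we plan to use it, that result says the following: if the right-hand side lies in \(C^{\gamma}(\overline\Omega)\), or more generally is \(C^\gamma\) with respect to the relevant weighted norm near the boundary, then \(w/d^s\in C^{\gamma+2s-\varepsilon'}(\overline\Omega)\) up to the limit imposed by the regularity of \(\partial\Omega\). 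Here \(d\) is the regularized distance. By \eqref{eq:regularized-distance-ratio}, \((d/\delta)^s\in C^{1,\varepsilon}\), so any \(C^{1,\varepsilon}\) conclusion for \(w/d^s\) transfers to \(u/\delta^s\) in a thinner collar \(U_{\rho'}\). The whole task is therefore to show that the right-hand side \(G\) is regular enough up to \(\partial\Omega\) that \(\gamma+2s>1\).

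First we identify \(G\) near the boundary. In \(U_\rho\) we have \(G=f(u)+h\) by \eqref{eq:truncated-collar-equation}. Because \(\supp(1-\chi)\subset B_{2r}\) stays at positive distance from \(\overline{U_\rho}\), differentiation under the integral (as in the proof of Lemma~\ref{lem:interior-bootstrap}, using \(u\in L^1_s\)) gives \(h\in C^\infty(\overline{U_\rho})\). In the interior regions \(D_0\) and \(D_m\), Lemma~\ref{lem:interior-bootstrap} makes \(u\) of class \(C^{1+2s-}\) away from the pole. The commutator \(\mathcal C_\chi[u]\), the term \(u(-\Delta)^s\chi\) and \(G_0\) are then all at least Hölder, and in fact \(C^{\gamma}\) for some \(\gamma\) close to \(1\). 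Since only the boundary behaviour of \(w/d^s\) matters, we may also localize the regularity statement to the collar, treating the interior part of \(G\) as a harmless smooth or Hölder perturbation.

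The two regimes then split according to the regularity of \(f(u)\) up to \(\partial\Omega\). In case (ii), \(u\to0\) uniformly at \(\partial\Omega\) because \(u=\delta^s\psi\) with \(\psi\) bounded. Hence \(u\le a\) in a thin collar, \(f(u)\equiv f(0)\) is constant there, and \(G\) is \(C^\infty\) in that collar. Boundary regularity then gives \(w/d^s\in C^{1,\varepsilon}\) for every \(s\in(0,1)\), with \(\varepsilon\) limited only by \(\alpha\). In case (i), \(u=\delta^s\psi\) with \(\psi\in C^{\alpha_0}\) gives \(u\in C^s(\overline{U_\rho})\), so \(f(u)\in C^s\) because \(f\) is Lipschitz on the bounded range of \(u\) in the collar. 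Thus \(G\in C^{s}\) near \(\partial\Omega\), and the estimate yields \(w/d^s\in C^{3s-\varepsilon'}\). Since \(s>1/2\), we have \(3s>1\), so we may choose \(\varepsilon'\) with \(3s-\varepsilon'=1+\varepsilon\) for some \(\varepsilon>0\), shrinking \(\varepsilon\) below \(\alpha\) if needed.

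The main obstacle is matching the hypotheses of \cite{AbatangeloRosOton} precisely. Their estimate requires a global equation with a Hölder right-hand side, possibly measured in a weighted Hölder space, and an exterior datum equal to zero. We already have both for \(w\), except that the global Hölder regularity of \(G\) across the regions \(D_0\), \(D_m\), \(U_\rho\) must be checked, in particular at the inner cutoff where the commutator appears. If the global form turns out to be awkward, we would first localize once more with a cutoff that equals \(1\) on the collar. This is the same commutator and tail computation as in Proposition~\ref{prop:boundary-quotient}, and it produces a right-hand side that is smooth away from \(\partial\Omega\) and equal to \(f(u)+h\) near it. We would then apply the local boundary version of the estimate. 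A secondary issue is that the rule \(\gamma+2s\), used here with \(\gamma=s\), might need the exponents to avoid integers, which is handled by decreasing \(\varepsilon'\) slightly.
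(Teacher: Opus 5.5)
You have the paper's architecture: the truncation \(w=\chi u\), the collar identity \(G=f(u)+h\) with \(h\) smooth, the two regimes, the Abatangelo--Ros-Oton estimate, and the transfer through \(d/\delta\). The gap is that you misquote the key estimate, and this is not cosmetic.

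Theorem~1.4 of Abatangelo--Ros-Oton gains \(s\), not \(2s\), for the boundary quotient. If \(\beta>s\), \(\beta,\beta\pm s\notin\mathbb N\), the domain is \(C^{\beta+1}\) and the right-hand side lies in \(C^{\beta-s}\), then \(w/d^s\in C^{\beta}\). The \(2s\) gain is the interior Schauder gain for \(w\) itself. Your rule ``\(G\in C^\gamma\Rightarrow w/d^s\in C^{\gamma+2s-\varepsilon'}\)'' is false, and the example in Appendix~\ref{app:strict-TC-example} refutes it. There the right-hand side \(-v+H\) belongs to \(C^s(\overline{B_1})\), yet \(v/\rho^s=\kappa_s-(\kappa_s/K_{1,s})\rho^{2s}+o(\rho^{2s})\) is no better than \(C^{2s}\) when \(s<1/2\). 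So ``since \(s>1/2\), \(3s>1\)'' identifies the wrong threshold. Taken at face value, your argument would give \(u/\delta^s\in C^{1,\varepsilon}\) for every \(s>1/3\) and every Lipschitz \(f\). That same example, with \(f(t)=-t\) and \(s\in(1/3,1/2)\), shows this is false.

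The repair keeps your structure but changes the bookkeeping. Fix \(\beta=1+\varepsilon\). What you need is \(G\in C^{1+\varepsilon-s}\) up to \(\partial\Omega\): globally on each component if you apply the theorem as the paper does, or only near \(\partial\Omega\) if you use a local form. In case (i), \(f(u)\in C^s\) is enough once \(1+\varepsilon-s\le s\), so take \(\varepsilon<2s-1\). This, not \(3s>1\), is where \(s>1/2\) enters. You also need \(\varepsilon<\alpha\) and the non-integer conditions on \(\beta\pm s\). Case (ii) is fine as written.

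Two further points need fixing.
\begin{itemize}
\item From \(u=\delta^s\psi\) with \(\psi\in C^{\alpha_0}\) you only get \(u\in C^{\min\{s,\alpha_0\}}\). You must invoke Proposition~1.1 of Ros-Oton--Serra to obtain \(w\in C^s(\mathbb R^N)\). With the correct gain this matters, because the exponent \(1+\varepsilon-s>1-s\) may exceed the unspecified \(\alpha_0\).
\item If you apply the theorem globally, you must show the interior part of \(G\) is \(C^{1+\varepsilon-s}\). The paper does this with the interior bootstrap and the mapping \((-\Delta)^s:C_c^{2s+\tau}\to C^\tau\) applied to \(\vartheta w\), not by estimating \(\mathcal C_\chi[u]\) directly. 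The right-hand side of your localized fallback is not smooth in the transition region either, since \(f\) is only Lipschitz. That is harmless only if you use the local form of the estimate.
\end{itemize}
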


\begin{proof}
Let \(w=\chi u\) be the bounded truncation constructed in Proposition~\ref{prop:boundary-quotient}. Thus
\begin{equation}\label{eq:global-higher-regularity-equation}
(-\Delta)^s w=G\quad\text{in }\Omega,
\qquad
w=0\quad\text{in }\mathbb R^N\setminus\Omega,
\end{equation}
in the weak sense, with \(G\in L^\infty(\Omega)\); see
\eqref{eq:truncated-global-equation}. Moreover, applying the bounded-data estimate of Ros-Oton--Serra
\cite[Proposition 1.1]{RosOtonSerra} componentwise as above gives
\[
w\in C^s(\mathbb R^N).
\]
Choose \(\varepsilon>0\) as follows. If \(s>1/2\), take
\[
0<\varepsilon<\min\{\alpha,2s-1,1-s\}.
\]
If instead \(f\) is constant on \([0,a]\), take
\[
0<\varepsilon<\min\{\alpha,s,1-s\}.
\]
Set
\[
\beta:=1+\varepsilon,
\qquad
\sigma:=\beta-s=1+\varepsilon-s.
\]
Then the above choices of $\varepsilon$ ensure that
\begin{equation}\label{eq:automatic-exponents}
0<\sigma<1,
\qquad
\beta>s,
\qquad
\beta,\ \beta-s,\ \beta+s\notin\mathbb N.
\end{equation}
In the case \(s>1/2\), the additional restriction
\(\varepsilon<2s-1\) also gives
\begin{equation}\label{eq:sigma-less-than-s}
\sigma<s.
\end{equation}
We first prove that
\begin{equation}\label{eq:global-G-holder}
G\in C^\sigma(\overline\Omega).
\end{equation}
We use the three regions from Proposition~\ref{prop:boundary-quotient}.
In a sufficiently thin boundary collar \(U_\rho\), with
\(\overline{U_\rho}\cap\overline{B_{2r}}=\emptyset\), one has \(w=u\) and
\begin{equation}\label{eq:localized-higher-regularity}
G=f(u)+h,
\qquad
h(x)=k_{N,s}\int_{\mathbb R^N}
\frac{(1-\chi(y))u(y)}{|x-y|^{N+2s}}\,\dd y.
\end{equation}
The supports of \(1-\chi\) and \(U_\rho\) are separated, so
\(h\in C^\infty(\overline{U_\rho})\). If \(s>1/2\), then
\(u=w\in C^s\) in the collar, hence
\(f(u)\in C^s(\overline{U_\rho})\); by
\eqref{eq:sigma-less-than-s}, \(G\in C^\sigma(\overline{U_\rho})\).
If instead \(f\) is constant on \([0,a]\), Proposition~\ref{prop:boundary-quotient} gives \(u=\delta^s\psi\) with \(\psi\) bounded
near the boundary. Thus \(u\to0\) uniformly as \(\delta\to0\), and after
shrinking \(\rho\) one has \(0\le u\le a\) in \(U_\rho\). Hence \(f(u)\)
is constant there and \(G\in C^\infty(\overline{U_\rho})\).

In \(B_{r/2}\), since \(w=0\),
\[
G(x)=-k_{N,s}\int_{\mathbb R^N}
\frac{w(y)}{|x-y|^{N+2s}}\,\dd y.
\]
The support of \(w\) is separated from \(B_{r/2}\), and therefore
\begin{equation}\label{eq:G-smooth-near-pole}
G\in C^\infty(\overline{B_{r/2}}).
\end{equation}
It remains to consider
\[
K:=\overline{\Omega\setminus(U_\rho\cup B_{r/2})}
\Subset\Omega\setminus\{0\}.
\]
Choose \(\delta_*>0\) so small that
\[
0<\delta_*<s-\varepsilon,
\qquad
2s+\sigma+\delta_*\notin\mathbb N.
\]
Since
\[
2s+\sigma+\delta_*=1+s+\varepsilon+\delta_*<1+2s,
\]
Lemma~\ref{lem:interior-bootstrap} yields, on a neighborhood
\(V\Subset\Omega\setminus\{0\}\) of \(K\),
\[
u\in C^{2s+\sigma+\delta_*}(V).
\]
Let \(\vartheta\in C_c^\infty(V)\) be equal to one in a neighborhood of
\(K\). Since \(w=\chi u\) and \(\chi\) is smooth,
\[
\vartheta w\in C_c^{2s+\sigma+\delta_*}(\mathbb R^N).
\]
For \(x\) near \(K\), using \eqref{eq:truncated-global-equation}, the following relation holds pointwise
\[
G(x)=(-\Delta)^s(\vartheta w)(x)
     +(-\Delta)^s((1-\vartheta)w)(x).
\]
Indeed, the second term is smooth there because \((1-\vartheta)w\) is supported at
positive distance from \(K\). By the standard H\"older estimate for the
fractional Laplacian \cite[Proposition 2.1.7]{SilvestreObstacle},
\[
(-\Delta)^s:C_c^{2s+\tau}(\mathbb R^N)\longrightarrow C^\tau(\mathbb R^N)
\]
for the noninteger exponents considered here. Taking
\(\tau=\sigma+\delta_*\), we obtain
\[
(-\Delta)^s(\vartheta w)\in C^{\sigma+\delta_*}(\mathbb R^N)
\subset C^\sigma(\mathbb R^N).
\]
Thus \(G\in C^\sigma\) near \(K\). Together with the collar estimate and
\eqref{eq:G-smooth-near-pole}, a finite covering of \(\overline\Omega\)
proves \eqref{eq:global-G-holder}.

We now apply the higher-order boundary estimate to the global
equation. A bounded open set with \(C^{2,\alpha}\) boundary has finitely many
connected components,
\[
\Omega=\bigcup_{j=1}^m\Omega_j,
\]
whose closures are pairwise separated by a positive distance. For each component set
\[
w_j:=\mathbf1_{\Omega_j}w.
\]
For \(x\in\Omega_j\), since \(w_j(x)=w(x)\) and \(w=0\) in \(\mathbb R^N\setminus\Omega\), direct subtraction of the two fractional Laplacians gives
\begin{equation}\label{eq:component-equation}
(-\Delta)^s w_j=G_j,
\qquad
G_j(x):=G(x)+k_{N,s}\int_{\Omega\setminus\Omega_j}
\frac{w(y)}{|x-y|^{N+2s}}\,\dd y.
\end{equation}
The interaction term is \(C^\infty(\overline{\Omega_j})\), since the other
components are at positive distance from \(\overline{\Omega_j}\). Hence
\(G_j\in C^{\beta-s}(\overline{\Omega_j})\). Moreover,
\(\beta+1=2+\varepsilon\) and \(\varepsilon<\alpha\), so every
\(C^{2,\alpha}\) component is, in particular, a \(C^{\beta+1}\) domain. Together with
\eqref{eq:automatic-exponents}, these are precisely the hypotheses of
\cite[Theorem 1.4]{AbatangeloRosOton} on each component: if \(d_j\) is the
regularized distance for \(\Omega_j\), then
\[
\frac{w_j}{d_j^s}\in C^\beta(\overline{\Omega_j})
=C^{1,\varepsilon}(\overline{\Omega_j}).
\]
For every connected component \(\Omega_j\) of \(\Omega\), one has
\(\partial\Omega_j\subset\partial\Omega\). Hence, for every \(x\in\Omega_j\),
\[
\dist(x,\partial\Omega_j)
=
\dist(x,\mathbb R^N\setminus\Omega_j)
=
\dist(x,\mathbb R^N\setminus\Omega)
=
\delta(x).
\]
Moreover, \(w_j=u\) in the boundary collar of \(\Omega_j\).
Applying \eqref{eq:regularized-distance-ratio} componentwise and taking the minimum of
the finitely many collar radii gives some \(\rho'>0\) such that
\[
\frac{u}{\delta^s}
=
\frac{w_j}{d_j^s}\left(\frac{d_j}{\delta}\right)^s
\in C^{1,\varepsilon}(\overline{U_{\rho'}}).
\]
\end{proof}

The preceding boundary estimate now yields the tangential cancellation required
in the main symmetry theorem.

\begin{proposition}
\label{prop:automatic-TC}
Assume the hypotheses of Theorem~\ref{thm:main}, except for
\eqref{eq:TC}, and suppose that \(\Omega\) is of class
\(C^{2,\alpha}\) for some \(\alpha\in(0,1)\). If either \(s>1/2\), or there
exists \(a>0\) such that \(f\) is constant on \([0,a]\), then
\eqref{eq:TC} holds.
\end{proposition}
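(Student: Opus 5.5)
The plan is to derive \eqref{eq:TC} directly from Lemma~\ref{lem:automatic-boundary-regularity}, combined with the constant boundary trace of \(\psi\) supplied by the overdetermined condition. Under the stated hypotheses (\(\Omega\) of class \(C^{2,\alpha}\), and either \(s>1/2\) or \(f\) constant near zero), Lemma~\ref{lem:automatic-boundary-regularity} gives \(\rho'>0\) and \(\varepsilon>0\) such that \(\psi=u/\delta^s\in C^{1,\varepsilon}(\overline{U_{\rho'}})\). In particular \(\psi\) extends as a \(C^1\) function up to \(\partial\Omega\), with uniformly continuous gradient on the closed collar. By Proposition~\ref{prop:boundary-quotient} and the discussion after \eqref{eq:fractional-normal-derivative}, this extension satisfies \(\psi=-c\) on \(\partial\Omega\).

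\emph{Step 1: the tangential gradient vanishes.} Fix \(Q\in\partial\Omega\) and a unit vector \(\tau\in T_Q\partial\Omega\). Take a \(C^2\) curve \(\gamma\subset\partial\Omega\) with \(\gamma(0)=Q\) and \(\gamma'(0)=\tau\). Since \(\psi\circ\gamma\equiv-c\) and \(\psi\) is \(C^1\) up to the boundary, differentiating gives \(\nabla\psi(Q)\cdot\tau=0\). If one prefers to avoid differentiating the extension along the boundary directly, one can approximate \(\gamma\) by the interior curves \(\gamma(\theta)+h\nu_{\gamma(\theta)}\), which lie in \(U_{\rho'}\) for small \(h>0\), and let \(h\to0\). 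This uses uniform continuity of \(\nabla\psi\) on the collar.

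\emph{Step 2: Taylor expansion.} For small \(t>0\), the points \(P_\pm:=Q+t(\nu_Q\pm\tau)\) lie in \(U_{\rho'}\), because \(\delta(P_\pm)\le\sqrt2\,t\) and \(\delta(P_\pm)>0\) for small \(t\) by the interior tangent ball at \(Q\). The segment joining them is \(\{Q+t\nu_Q+r\tau:|r|\le t\}\), and it also lies in \(\Omega\cap U_{\rho'}\) for small \(t\). The reason is that the interior ball of radius \(r_0\) at \(Q\) contains \(Q+t\nu_Q+r\tau\) as soon as \(r^2+(t-r_0)^2<r_0^2\), i.e. \(r^2<2r_0t-t^2\), which holds for \(|r|\le t\) and \(t\) small. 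The mean value theorem along this segment gives
\[
\psi(P_+)-\psi(P_-)=2t\,\nabla\psi(\xi_t)\cdot\tau
\]
for some \(\xi_t\) on the segment, and \(|\xi_t-Q|\le\sqrt2\,t\). By Step 1 and the Hölder continuity of \(\nabla\psi\),
\[
|\nabla\psi(\xi_t)\cdot\tau|=|(\nabla\psi(\xi_t)-\nabla\psi(Q))\cdot\tau|\le C t^{\varepsilon}.
\]
Hence the difference is \(O(t^{1+\varepsilon})=o(t)\), which is \eqref{eq:TC}.

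The only delicate point is Step 1: it requires that the \(C^{1,\varepsilon}\) extension of \(\psi\) to \(\partial\Omega\) coincide with the boundary trace \(-c\) from the fractional normal derivative. This follows because the continuous extension of \(u/\delta^s\) is unique and, by the tubular-neighborhood argument recalled after \eqref{eq:fractional-normal-derivative}, equals \(-(\partial_\eta)^s u=-c\). Everything else is the elementary calculus above, using Lemma~\ref{lem:automatic-boundary-regularity} as the input.
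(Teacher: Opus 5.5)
Your proposal is correct and is essentially the paper's proof: Lemma~\ref{lem:automatic-boundary-regularity} gives $u/\delta^s\in C^{1,\varepsilon}$ up to the boundary, and the constant trace $-c$ forces the tangential derivatives to vanish at $Q$. A first-order expansion at $Q$ then gives $O(t^{1+\varepsilon})=o(t)$, and you only add details the paper leaves implicit: the segment stays in the collar by the interior-ball computation, the mean value theorem is applied along it, and the vanishing tangential derivative is justified through interior approximating curves.
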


\begin{proof}
By Lemma~\ref{lem:automatic-boundary-regularity}, for some
\(\rho'>0\) and \(\varepsilon>0\),
\[
\frac{u}{\delta^s}\in C^{1,\varepsilon}(\overline{U_{\rho'}}).
\]
For \(N\ge2\), the
overdetermined datum and continuity of the quotient give
\[
\frac{u}{\delta^s}=-c
\qquad\text{on }\partial\Omega.
\]
Hence all tangential derivatives of \(u/\delta^s\) vanish on the boundary.
For \(Q\in\partial\Omega\) and \(\tau\in T_Q\partial\Omega\) with
\(|\tau|=1\), the \(C^{1,\varepsilon}\)-expansion at \(Q\) yields
\[
\frac{u}{\delta^s}\bigl(Q+t(\nu_Q+\tau)\bigr)
-
\frac{u}{\delta^s}\bigl(Q+t(\nu_Q-\tau)\bigr)
=O(t^{1+\varepsilon})=o(t),
\]
since \(\varepsilon > 0\), which is \eqref{eq:TC}.
\end{proof}

\section{Comparison principles}
\label{sec:comparison-principles}

This section first records the local equation satisfied by the reflected
difference away from the two poles and then establishes the two comparison
ingredients that replace the standard maximum principle in the singular
context. The weak comparison lemma starts the inequality in small caps despite
the reflected pole, while the strong comparison lemma prevents an
antisymmetric supersolution from touching zero away from that pole.

\begin{lemma}\label{lem:away-from-poles}
Let \(\lambda<0\) satisfy \(\Omega'_\lambda\subset\Omega\), and let
\(A\subset\Omega_\lambda\) be a bounded Lipschitz open set such that
\[
\dist\bigl(\overline A,\{0,0^\lambda\}\bigr)>0.
\]
Then \(w_\lambda=u_\lambda-u\) belongs to
\(\mathcal D^s(A)\) and satisfies
\[
\mathcal E(w_\lambda,\phi)
=
\int_A\bigl(f(u_\lambda)-f(u)\bigr)\phi\,\dd x
\qquad\forall\phi\in H_0^s(A).
\]
In particular,
\[
(-\Delta)^s w_\lambda=c_\lambda(x)w_\lambda\quad\text{in }A,
\qquad c_\lambda\in L^\infty(A).
\]
\end{lemma}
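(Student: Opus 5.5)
We first show that \(w_\lambda\in\mathcal D^s(A)\), and then check the weak identity on smooth test functions and pass to \(H_0^s(A)\) by density. Set \(\kappa:=\dist(\overline A,\{0,0^\lambda\})>0\) and choose a bounded open set \(A'\) with \(\overline A\subset A'\) and \(\dist(\overline{A'},\{0,0^\lambda\})>\kappa/2\). Since \(u\in W^{s,2}_\loc(\R^N\setminus\{0\})\) and \(R_\lambda\) is an isometry, we get \(u_\lambda\in W^{s,2}_\loc(\R^N\setminus\{0^\lambda\})\), and hence \(w_\lambda\in W^{s,2}(A')\). The tail condition passes to \(u_\lambda\): by the change of variables \(y=x^\lambda\), the weights \(1+|y|^{N+2s}\) and \(1+|y^\lambda|^{N+2s}\) are comparable with constants depending only on \(\lambda\). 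Therefore \(w_\lambda\in L^1_s(\R^N)\). For \(\phi\in H_0^s(A)\) we split \(\mathcal E(w_\lambda,\phi)\) into the part on \(A'\times A'\), which Cauchy--Schwarz bounds by \([w_\lambda]_{W^{s,2}(A')}[\phi]_{H^s}\), and the cross part in which \(x\in A\) and \(y\notin A'\). There \(|x-y|\ge \dist(A,\R^N\setminus A')>0\), so this part is bounded by \(\|\phi\|_{L^1}(|w_\lambda|\) on \(A)\) plus a tail term controlled by \(\|w_\lambda\|_{L^1_s}\). The values \(w_\lambda|_A\) are bounded by continuity, since \(\overline A\) avoids both poles. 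This gives finiteness, so \(w_\lambda\in\mathcal D^s(A)\).

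For the identity we take \(\phi\in C_c^\infty(A)\). Since \(A\subset\Omega_\lambda\subset\Omega\) and \(\overline A\not\ni 0\), the weak formulation gives \(\mathcal E(u,\phi)=\int_A f(u)\phi\). For \(u_\lambda\) we use the reflection invariance of the kernel, \(|x^\lambda-y^\lambda|=|x-y|\), so that \(\mathcal E(u_\lambda,\phi)=\mathcal E(u,\phi\circ R_\lambda)\). Here \(\phi\circ R_\lambda\in C_c^\infty(R_\lambda(A))\), and \(R_\lambda(A)\subset\Omega'_\lambda\subset\Omega\). This is exactly where the hypothesis \(\Omega'_\lambda\subset\Omega\) enters. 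Moreover \(R_\lambda(\overline A)\) avoids \(0\) because \(\overline A\) avoids \(0^\lambda\). Hence \(\mathcal E(u_\lambda,\phi)=\int f(u)\,\phi\circ R_\lambda=\int_A f(u_\lambda)\phi\). Subtracting the two identities gives the claim for smooth \(\phi\). To extend to \(\phi\in H_0^s(A)\), we use density of \(C_c^\infty(A)\) in \(H_0^s(A)\) for Lipschitz \(A\), as recorded in \eqref{eq:H0-characterization}. The map \(\phi\mapsto\mathcal E(w_\lambda,\phi)\) is continuous in the \(H^s\) norm by the estimate from the first step, and the right-hand side is continuous because \(f(u_\lambda)-f(u)\) is bounded on \(A\).

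For the final statement we set \(c_\lambda:=\frac{f(u_\lambda)-f(u)}{u_\lambda-u}\) where \(w_\lambda\ne0\), and \(c_\lambda:=0\) otherwise. Both \(u\) and \(u_\lambda\) are continuous on \(\overline A\) and take values in a compact subset of \(\R_+\). The function \(f\) is Lipschitz there with some constant \(L\), so \(|c_\lambda|\le L\).

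The main obstacle is the bookkeeping of the cross terms in the \(\mathcal D^s\) estimate. Neither \(u\) nor \(u_\lambda\) is globally \(H^s\), because of the two poles, so the pairing cannot be estimated by global seminorms. The argument relies on the separation of \(\operatorname{supp}\phi\) from the poles, which makes the interaction with neighbourhoods of \(0\) and \(0^\lambda\) nonsingular. The \(L^1_s\) control then handles both these interactions and the tail at infinity.
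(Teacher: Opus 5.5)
Your proposal is correct and follows essentially the same route as the paper. Both arguments localize near \(\overline A\), bound the \(\mathcal D^s(A)\) pairing by Cauchy--Schwarz plus the \(L^1_s\) tail, and use reflection invariance of \(\mathcal E\) to test the equation for \(u\) with \(\phi\circ R_\lambda\), followed by density in \(H_0^s(A)\) and the difference quotient for \(c_\lambda\); the only difference is that you split the integration domain into \(A'\times A'\) and cross terms, whereas the paper writes \(w_\lambda=\theta w_\lambda+(1-\theta)w_\lambda\) with a cutoff \(\theta\), and the two localizations are equivalent.
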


\begin{proof}
Choose an open set \(V\) such that
\[
\overline A\subset V\Subset
\mathbb R^N\setminus\{0,0^\lambda\}.
\]
Then
\[
w_\lambda\in W^{s,2}(V)\cap L_s^1(\mathbb R^N).
\]
Since the reflection \(x\mapsto x^\lambda\) preserves Lebesgue measure and
\[
C_{\lambda}^{-1} (1+|x|^{N+2s}) \le 1+|x^{\lambda}|^{N+2s} \le C_{\lambda} (1+|x|^{N+2s}),
\]
a change of variables gives
\[
\|u_\lambda\|_{L_s^1}\le C_\lambda\|u\|_{L_s^1}.
\]
Let \(\theta\in C_c^\infty(V)\) equal one near
\(\overline A\), and write
\(w_\lambda=\theta w_\lambda+(1-\theta)w_\lambda\).  The first term belongs
to \(H^s(\mathbb R^N)\), so its energy pairing with
\(\phi\in H_0^s(A)\) is controlled by Cauchy--Schwarz.  The second term is
supported at positive distance from \(A\), and
\[
\sup_{x\in A}\int_{\mathbb R^N}
\frac{|(1-\theta(y))w_\lambda(y)|}{|x-y|^{N+2s}}\,\dd y<\infty
\]
by the local integrability and \(L_s^1\)-tail of \(w_\lambda\).  Hence
\[
|\mathcal E(w_\lambda,\phi)|
\leq C_A\|\phi\|_{H^s(\mathbb R^N)},
\]
which gives \(w_\lambda\in\mathcal D^s(A)\).

For \(\phi\in C_c^\infty(A)\), both \(\phi\) and its reflection are
admissible in the punctured weak equation. Indeed,
\(A^\lambda\subset\Omega'_\lambda\subset\Omega\), and the distance
assumption excludes both poles. Reflection invariance of the energy form and
subtraction of the two identities give
\[
\mathcal E(w_\lambda,\phi)
=
\int_A\bigl(f(u_\lambda)-f(u)\bigr)\phi\,\dd x.
\]
The identity extends to \( \phi \in H_0^s(A)\) by density.  Finally, \(u\) and
\(u_\lambda\) are bounded on \(\overline A\).  Defining
\[
c_\lambda(x)
:=
\begin{cases}
\dfrac{f(u_\lambda(x))-f(u(x))}
{u_\lambda(x)-u(x)},
&u_\lambda(x)\neq u(x),\\[1.2ex]
0,&u_\lambda(x)=u(x),
\end{cases}
\]
the local Lipschitz continuity of \(f\) gives
\(c_\lambda\in L^\infty(A)\), and the asserted linear equation follows.
\end{proof}

\begin{lemma}[Weak comparison principle in small domains]\label{lem:small-domain-comparison}
Let \(\lambda<0\) satisfy \(\Omega'_\lambda\subset\Omega\), and let
\(D\subset\Omega_\lambda\) be a bounded open set. Define \(q_\lambda\) almost everywhere by
\(q_\lambda:=\mathbf1_{\Sigma_\lambda}w_{\lambda}^{-}\), and assume that
\(w_\lambda\geq0\) a.e. in \(\Sigma_\lambda\setminus D\). Set
\[
M_\lambda:=\|u\|_{L^\infty(\Omega_\lambda)}<\infty.
\]
If \(L_{M_\lambda}\), the Lipschitz constant of \(f\) on \([0,M_\lambda]\), satisfies
\(L_{M_\lambda}<\lambda_1(D)\), then
\(w_\lambda\geq0\) in \(\Sigma_\lambda\setminus\{0^\lambda\}\).
\end{lemma}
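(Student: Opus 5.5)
The plan is to test the weak formulation of Lemma~\ref{lem:away-from-poles} with the negative part of \(w_\lambda\), after removing the reflected pole by a capacity cutoff, and then let the cutoff disappear.

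\emph{Setup.} Since \(\lambda<0\), the pole \(0\) lies outside \(\overline{\Sigma_\lambda}\), and \(u\) is continuous on \(\overline{\Omega_\lambda}\). So \(M_\lambda<\infty\). On \(\Sigma_\lambda\setminus\Omega_\lambda\) we have \(u=0\le u_\lambda\), hence \(w_\lambda\ge0\) there. Thus \(q_\lambda\) is supported in \(\overline D\), and near \(0^\lambda\) only the singular reflected function \(u_\lambda\) appears, which is large and positive. I expect \(w_\lambda^-\) to vanish in a neighbourhood of \(0^\lambda\) whenever \(u\) is bounded there. Even so, I would not rely on this. Instead I would use the product cutoff \(\psi_\varepsilon:=\psi_{0^\lambda,\varepsilon}\) from \eqref{eq:capacity-cutoffs} and consider the test function \(\phi_\varepsilon:=q_\lambda\psi_\varepsilon^2\), or \(q_\lambda\psi_\varepsilon\).

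\emph{Admissibility of the test function.} The support of \(\phi_\varepsilon\) lies in \(\overline D\setminus B_\varepsilon(0^\lambda)\), at positive distance from both poles. Since \(w_\lambda\) is \(W^{s,2}\) there and negative parts preserve \(W^{s,2}\), the function \(\phi_\varepsilon\) belongs to \(H^s_0(A)\) for a Lipschitz set \(A\subset\Omega_\lambda\) containing \(D\setminus B_\varepsilon(0^\lambda)\) and avoiding the poles. Lemma~\ref{lem:away-from-poles} then gives
\[
\mathcal E(w_\lambda,\phi_\varepsilon)=\int c_\lambda w_\lambda\phi_\varepsilon\,\dd x\le L_{M_\lambda}\int q_\lambda^2\psi_\varepsilon^2\,\dd x .
\]
Here I used that \(c_\lambda w_\lambda\phi_\varepsilon=-c_\lambda q_\lambda^2\psi_\varepsilon^2\), that \(|c_\lambda|\le L_{M_\lambda}\) where \(u_\lambda\le M_\lambda\), and that where \(w_\lambda<0\) both values lie in \([0,M_\lambda]\).

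\emph{Lower bound on the energy, using antisymmetry.} I would split \(\mathbb R^N=\Sigma_\lambda\cup\Sigma_\lambda^c\) and use \(w_\lambda(x^\lambda)=-w_\lambda(x)\), following Fall--Jarohs. The standard computation gives
\[
\mathcal E(w_\lambda,q_\lambda\psi_\varepsilon^2)\ \le\ -\mathcal E(q_\lambda\psi_\varepsilon,q_\lambda\psi_\varepsilon)+\text{(cutoff error)}.
\]
The interaction terms with the reflected half-space have a favourable sign, because the kernel satisfies \(|x-y^\lambda|\ge|x-y|\) for \(x,y\in\Sigma_\lambda\) and \(w_\lambda\ge0\) off \(\operatorname{supp} q_\lambda\). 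The cutoff error has the form \(\iint q_\lambda(x)q_\lambda(y)(\psi_\varepsilon(x)-\psi_\varepsilon(y))^2|x-y|^{-N-2s}\), plus a tail term.

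\emph{Main obstacle: controlling the cutoff error.} This error must be bounded independently of the blow-up of \(u_\lambda\). Near \(0^\lambda\) one has \(q_\lambda\le u\le M_\lambda\), since \(q_\lambda>0\) forces \(u>u_\lambda\ge0\) and \(u\) is bounded on \(\Omega_\lambda\). So \(q_\lambda\le M_\lambda\) everywhere. The error is therefore at most \(CM_\lambda^2[\eta_{0^\lambda,\varepsilon}]^2_{H^s}\to0\) by \eqref{eq:capacity-cutoffs}. Combining the bounds yields
\[
\mathcal E(q_\lambda\psi_\varepsilon,q_\lambda\psi_\varepsilon)\le L_{M_\lambda}\|q_\lambda\psi_\varepsilon\|_{L^2(D)}^2+o(1).
\]
Since \(q_\lambda\psi_\varepsilon\in H^s_0(D)\), the left side is at least \(\lambda_1(D)\|q_\lambda\psi_\varepsilon\|_2^2\). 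This gives uniform \(H^s\) bounds, and letting \(\varepsilon\to0\) (weak lower semicontinuity, \(\psi_\varepsilon\to1\) a.e.) we obtain \(q_\lambda\in H^s_0(D)\) with
\[
(\lambda_1(D)-L_{M_\lambda})\|q_\lambda\|_2^2\le0 .
\]
Hence \(q_\lambda=0\), so \(w_\lambda\ge0\) almost everywhere, and by continuity \(w_\lambda\ge0\) on \(\Sigma_\lambda\setminus\{0^\lambda\}\).
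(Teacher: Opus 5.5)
Your architecture is the paper's: the same test function \(q_\lambda\psi_\varepsilon^2\) with \(\psi_\varepsilon=\psi_{0^\lambda,\varepsilon}\), the same antisymmetric inequality \(-\mathcal E(w_\lambda,q_\lambda\psi_\varepsilon^2)\ge\mathcal E(q_\lambda,q_\lambda\psi_\varepsilon^2)\) from \(|x-y|<|x-y^\lambda|\) on \(\Sigma_\lambda\times\Sigma_\lambda\), and the same cutoff identity, whose error is \(O(M_\lambda^2[\eta_{0^\lambda,\varepsilon}]_{H^s}^2)\) because \(0\le q_\lambda\le M_\lambda\). The Rayleigh-quotient conclusion is also the same. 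Your lower-semicontinuity finish is fine, though \((\lambda_1(D)-L_{M_\lambda})\int_D q_\lambda^2\psi_\varepsilon^2\le o(1)\) plus dominated convergence already suffices. However, two steps do not go through as written.

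\textbf{Admissibility.} The function \(q_\lambda=\mathbf{1}_{\Sigma_\lambda}w_\lambda^-\) is not the negative part of \(w_\lambda\). Since \(w_\lambda^-(x^\lambda)=w_\lambda^+(x)\), the global negative part lives on both sides of \(T_\lambda\), and \(q_\lambda\) is obtained from it by a cut along \(T_\lambda\). The fact that \emph{negative parts preserve \(W^{s,2}\)} says nothing about this cut, and for \(s\ge1/2\) multiplication by \(\mathbf{1}_{\Sigma_\lambda}\) is not bounded on \(H^s\).

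What controls the jump of the indicator, uniformly in \(s\), is antisymmetry. For antisymmetric \(v\in H^s(\R^N)\) one has \(\int_{\Sigma_\lambda}v^2\dist(x,T_\lambda)^{-2s}\,dx\le C[v]_{H^s(\R^N)}^2\). This is what places \(q_\lambda\psi_\varepsilon\) in \(H^s_0(D)\), which you need both for the test and for the Rayleigh quotient. To use it you need a globally \(H^s\), antisymmetric function, and your cutoff \(\psi_{0^\lambda,\varepsilon}\) does not give one. It is not reflection-invariant, and \(\psi_{0^\lambda,\varepsilon}w_\lambda\) still carries the singularity of \(u\) at \(0\).

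The paper instead multiplies \(w_\lambda\) by the reflection-invariant product \(\zeta_\varepsilon=\psi_{0^\lambda,\varepsilon}\psi_{0,\varepsilon}\). This function removes both poles and equals your \(\psi_\varepsilon\) on \(\Sigma_\lambda\), so \(\zeta_\varepsilon w_\lambda\in H^s(\R^N)\) is antisymmetric. The Fall--Jarohs truncation property then gives \(\mathbf{1}_{\Sigma_\lambda}(\zeta_\varepsilon w_\lambda)^-=q_\lambda\psi_\varepsilon\in H^s_0(D)\).

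Relatedly, Lemma~\ref{lem:away-from-poles} requires a Lipschitz set \(A\), and such an \(A\) need not exist when \(\overline D\) reaches \(\partial\Omega_\lambda\), as for \(D=\Omega_\lambda\), since \(\Omega_\lambda\) itself need not be Lipschitz. The paper avoids this by subtracting the two punctured weak equations directly, approximating by smooth tests.

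\textbf{Sign slip.} You need to combine the equation with \(\mathcal E(w_\lambda,\phi_\varepsilon)\le-\mathcal E(q_\lambda\psi_\varepsilon,q_\lambda\psi_\varepsilon)+o(1)\). For that, what the equation must supply is the lower bound \(\mathcal E(w_\lambda,\phi_\varepsilon)=-\int c_\lambda q_\lambda^2\psi_\varepsilon^2\,dx\ge-L_{M_\lambda}\int q_\lambda^2\psi_\varepsilon^2\,dx\). The upper bound you display cannot be combined with it, since two upper bounds on the same quantity give nothing. The correct bound follows from the same estimate \(|c_\lambda|\le L_{M_\lambda}\) on \(\{q_\lambda>0\}\).
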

\begin{proof}
For the sake of brevity, let us write
\[
q:=\mathbf1_{\Sigma_\lambda}w_{\lambda}^{-}.
\]
By the exterior nonnegativity assumption, \(q=0\) a.e. in
\(\mathbb R^N\setminus D\). The restriction to \(\Sigma_\lambda\) is essential. Indeed, by
antisymmetry,
\[
w_{\lambda}^{-}(x^\lambda)=w_{\lambda}^{+}(x),
\]
so the global negative part \(w_{\lambda}^{-}\) need not vanish outside
\(\Sigma_\lambda\) and therefore is not, in general, an admissible test
function for the equation in \(\Omega_\lambda\).

For \(\varepsilon<|\lambda|/4\), let
\[
\psi_\varepsilon:=\psi_{0^\lambda,\varepsilon}
\]
be the capacity cutoff from \eqref{eq:capacity-cutoffs}. Thus
\(\psi_\varepsilon=0\) near \(0^\lambda\),
\(\psi_\varepsilon\to1\) almost everywhere, and
\[
[1-\psi_\varepsilon]_{H^s(\mathbb R^N)}^2
=
[\eta_{0^\lambda,\varepsilon}]_{H^s(\mathbb R^N)}^2
\longrightarrow0.
\]
We first note that
\[
q\psi_\varepsilon,\qquad q\psi_\varepsilon^2
\in H_0^s(D).
\]
Indeed, set 
\[
\zeta_\varepsilon
:=
\psi_{0^\lambda,\varepsilon}\psi_{0,\varepsilon}.
\]
The cutoff \(\zeta_\varepsilon\) is invariant under reflection with
respect to \(T_\lambda\), removes both poles, and satisfies
\(\zeta_\varepsilon=\psi_\varepsilon\) in \(\Sigma_\lambda\).
Since \(w_\lambda\) has bounded support and belongs to
\(W^{s,2}_{\rm loc}(\mathbb R^N\setminus\{0,0^\lambda\})\), this gives
\begin{equation}\label{eq:cutoff-reflection-Hs}
\zeta_\varepsilon w_\lambda\in H^s(\mathbb R^N).
\end{equation}
Moreover, since \(q=0\) almost everywhere in
\(\mathbb R^N\setminus D\),
\(\zeta_\varepsilon w_\lambda\geq0\) in
\(\Sigma_\lambda\setminus D\). Hence the standard truncation property
for antisymmetric Sobolev functions
\cite[Proposition 3.1]{FallJarohs} yields
\[
\mathbf1_{\Sigma_\lambda}
(\zeta_\varepsilon w_\lambda)^-
=
q\psi_\varepsilon
\in H_0^s(D),
\]
and multiplication by \(\psi_\varepsilon\) still gives
\(q\psi_\varepsilon^2\in H_0^s(D)\).

The latter function is also an admissible test in the punctured weak
equations. Indeed, it is supported in
\(D\subset\Omega_\lambda\subset\Omega\), is separated from the original
pole, and vanishes near \(0^\lambda\); its reflection is supported in
\(\Omega'_\lambda\subset\Omega\) and vanishes near the original pole.
Using
\(H_0^s(\Omega)=\widetilde H^s(\Omega)\) and approximating by smooth
tests, we therefore subtract the weak equations for \(u_\lambda\) and \(u\).
This gives
\begin{equation}\label{eq:weak-difference-negative-part}
-\mathcal E(w_\lambda,q\psi_\varepsilon^2)
=
\int_D
\bigl(f(u)-f(u_\lambda)\bigr)
q\psi_\varepsilon^2\,\dd x.
\end{equation}
For completeness, set \(p:=\mathbf1_{\Sigma_\lambda}w_\lambda^+\) and, for
\(x,y\in\Sigma_\lambda\),
\[
K_1(x,y):=|x-y|^{-N-2s},
\qquad
K_2(x,y):=|x-y^\lambda|^{-N-2s}.
\]
Using antisymmetry and reflecting the integration over
\(\mathbb R^N\setminus\Sigma_\lambda\), one obtains, first for truncated
kernels and then by passage to the limit,
\[
\begin{aligned}
&-\mathcal E(w_\lambda,q\psi_\varepsilon^2)
-\mathcal E(q,q\psi_\varepsilon^2)\\
&\quad =k_{N,s}\int_{\Sigma_\lambda}\!\int_{\Sigma_\lambda}
q(x)\psi_\varepsilon(x)^2
\Bigl[p(y)\bigl(K_1(x,y)-K_2(x,y)\bigr)+q(y)K_2(x,y)\Bigr]
\,\dd y\,\dd x\geq0,
\end{aligned}
\]
since \(|x-y|<|x-y^\lambda|\) for \(x,y\in\Sigma_\lambda\). Hence
\begin{equation}\label{eq:antisymmetric-negative-part-energy}
-\mathcal E(w_\lambda,q\psi_\varepsilon^2)
\geq
\mathcal E(q,q\psi_\varepsilon^2).
\end{equation}
Using the elementary identity
\[
(a-b)(a\alpha^2-b\beta^2)
=
(a\alpha-b\beta)^2-ab(\alpha-\beta)^2, \quad \forall a,b,\alpha,\beta \in \R
\]
we obtain
\begin{equation}\label{eq:weighted-cutoff-energy}
\begin{aligned}
\mathcal E(q,q\psi_\varepsilon^2)
&=
\mathcal E(q\psi_\varepsilon,q\psi_\varepsilon)\\
&\quad-
\frac{k_{N,s}}2
\iint_{\mathbb R^N\times\mathbb R^N}
\frac{q(x)q(y)
\bigl(\psi_\varepsilon(x)-\psi_\varepsilon(y)\bigr)^2}
{|x-y|^{N+2s}}\,\dd x\,\dd y.
\end{aligned}
\end{equation}
On \(\{q>0\}\) one has
\[
q=u-u_\lambda,
\qquad
0\leq u_\lambda<u\leq M_\lambda,
\qquad
0\leq q\leq M_\lambda.
\]
Here \(M_\lambda<\infty\) because \(\lambda<0\) implies that
\(\overline{\Omega_\lambda}\) is a compact subset of
\(\overline\Omega\setminus\{0\}\), while
\(u\in C(\mathbb R^N\setminus\{0\})\). Therefore the last term is bounded in absolute value by
\[
CM_\lambda^2
[\eta_{0^\lambda,\varepsilon}]_{H^s(\mathbb R^N)}^2
=o(1).
\]
Since \(f\) is \(L_{M_\lambda}\)-Lipschitz on \([0,M_\lambda]\),
\eqref{eq:weak-difference-negative-part} consequently yields
\[
\mathcal E(q\psi_\varepsilon,q\psi_\varepsilon)
\leq
L_{M_\lambda}\int_D q^2\psi_\varepsilon^2\,\dd x+o(1).
\]
As \(q\psi_\varepsilon\in H_0^s(D)\), the variational characterization
of \(\lambda_1(D)\) gives
\[
\lambda_1(D)\int_D q^2\psi_\varepsilon^2\,\dd x
\leq
\mathcal E(q\psi_\varepsilon,q\psi_\varepsilon).
\]
Hence
\[
\bigl(\lambda_1(D)-L_{M_\lambda}\bigr)
\int_D q^2\psi_\varepsilon^2\,\dd x
\leq o(1).
\]
Since \(0\leq q\leq M_\lambda\), \(D\) is bounded, and
\(\psi_\varepsilon\to1\) almost everywhere, dominated convergence gives
\[
\bigl(\lambda_1(D)-L_{M_\lambda}\bigr)
\int_D q^2\,\dd x\leq0.
\]
By assumption \(\lambda_1(D)-L_{M_\lambda}>0\), and therefore \(q\equiv0\).
Thus \(w_\lambda\geq0\) almost everywhere in \(\Sigma_\lambda\).
Since \(w_\lambda\) is continuous away from \(0^\lambda\), the inequality
holds pointwise in
\(\Sigma_\lambda\setminus\{0^\lambda\}\).
\end{proof}

Once nonnegativity has been obtained, the next lemma upgrades it to strict
positivity away from the reflected pole, unless full symmetry has already
occurred.

\begin{lemma}[Strong maximum principle]\label{lem:strong-maximum}
Let \(\lambda<0\) satisfy \(\Omega'_\lambda\subset\Omega\), and assume that
\(w_\lambda\geq0\) in \(\Sigma_\lambda\setminus\{0^\lambda\}\). Then either
\(w_\lambda\equiv0\) in \(\Sigma_\lambda\setminus\{0^\lambda\}\), or
\(w_\lambda>0\) in \(\Omega_\lambda\setminus\{0^\lambda\}\).
\end{lemma}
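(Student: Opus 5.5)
The plan is a nonlocal strong maximum principle for antisymmetric functions, proved pointwise and localized away from the two poles, so that Lemma~\ref{lem:away-from-poles} supplies a linear equation with bounded coefficient. Suppose \(w_\lambda\not\equiv0\) in \(\Sigma_\lambda\setminus\{0^\lambda\}\). Since \(w_\lambda\ge0\) there, antisymmetry forces \(w_\lambda>0\) on a set of positive measure in \(\Sigma_\lambda\). We argue by contradiction: assume \(w_\lambda(x_0)=0\) for some \(x_0\in\Omega_\lambda\setminus\{0^\lambda\}\). Note that \(x_0\neq0\) automatically, because \(0\notin\Sigma_\lambda\) when \(\lambda<0\).

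Choose a small ball \(A=B_r(x_0)\Subset\Omega_\lambda\) whose closure avoids \(0\) and \(0^\lambda\). By Lemma~\ref{lem:away-from-poles}, \((-\Delta)^s w_\lambda=c_\lambda w_\lambda\) in \(A\) with \(c_\lambda\in L^\infty(A)\). The right-hand side is then bounded on \(A\), since \(u\) and \(u_\lambda\) are bounded there. There are two routes from here.

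\emph{First route (preferred).} Invoke directly the strong maximum principle for antisymmetric weak supersolutions of Fall--Jarohs \cite{FallJarohs}. In the form we need, it states: if \(v\) is antisymmetric, \(v\ge0\) in \(\Sigma_\lambda\), \(v\in\mathcal D^s(A)\), and \((-\Delta)^s v+c^-v\ge0\) weakly in \(A\), then either \(v\equiv0\) in \(\Sigma_\lambda\) or \(v>0\) in \(A\) (in the essential-infimum sense on compacts). Since \(w_\lambda\ge0\) in \(A\), we have \(c_\lambda w_\lambda\ge -c_\lambda^-w_\lambda\), so the hypothesis holds. Continuity of \(w_\lambda\) away from the poles upgrades the conclusion to pointwise positivity at \(x_0\), which is the contradiction. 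Because \(\Omega_\lambda\setminus\{0^\lambda\}\) may be covered by such balls, and the alternative \(w_\lambda\equiv0\) is global, the dichotomy follows.

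\emph{Second route (self-contained, if the weak version cannot be applied verbatim).} Use Lemma~\ref{lem:interior-bootstrap}: \(u\) is \(C^{1+2s-}\) near \(x_0\) and \(x_0^\lambda\), so \(w_\lambda\) is \(C^{2s+\gamma}\) near \(x_0\) and the equation holds pointwise. At \(x_0\), \(w_\lambda(x_0)=0\), so \(c_\lambda(x_0)w_\lambda(x_0)=0\) (using the convention \(c_\lambda=0\) on \(\{u_\lambda=u\}\)). Meanwhile, splitting \(\R^N=\Sigma_\lambda\cup(\R^N\setminus\Sigma_\lambda)\) and reflecting,
\[
(-\Delta)^s w_\lambda(x_0)=-k_{N,s}\int_{\Sigma_\lambda}w_\lambda(y)\Bigl(\frac1{|x_0-y|^{N+2s}}-\frac1{|x_0-y^\lambda|^{N+2s}}\Bigr)\dd y.
\]
This is strictly negative, because the kernel difference is positive on \(\Sigma_\lambda\) and \(w_\lambda\ge0\) is positive on a set of positive measure. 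Note that the reflected pole contributes only an integrable singularity, since \(w_\lambda\in L^1_{\loc}\). This contradicts the equation at \(x_0\), which gives \(0\).

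The main obstacle is justifying the pointwise evaluation: the principal-value integral must converge near \(x_0\), which needs \(w_\lambda\in C^{2s+\gamma}\) locally, and the integral must be absolutely convergent near \(0^\lambda\) and at infinity, which the \(L^1_s\) bound on \(u_\lambda\) provides. If the pointwise route is awkward, I would fall back on the weak Fall--Jarohs statement applied in \(A\), which only requires \(w_\lambda\in\mathcal D^s(A)\), exactly as given by Lemma~\ref{lem:away-from-poles}.
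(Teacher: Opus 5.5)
Your first (preferred) route is correct and is essentially the paper's proof: localize in a ball \(B_r(x_0)\Subset\Omega_\lambda\setminus\{0^\lambda\}\), use Lemma~\ref{lem:away-from-poles} to get \(w_\lambda\in\mathcal D^s(B_r(x_0))\) with a bounded coefficient, and invoke the Fall--Jarohs strong maximum principle for antisymmetric supersolutions that are nonnegative in \(\Sigma_\lambda\), with \(x_0\) arbitrary. Your pointwise second route is also a valid, self-contained alternative (Lemma~\ref{lem:interior-bootstrap} justifies evaluating the equation at \(x_0\), and the reflected-kernel integral is strictly negative), but it is not needed.
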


\begin{proof}
Fix \(x_0\in\Omega_\lambda\setminus\{0^\lambda\}\), and choose a ball
\[
B:=B_r(x_0)\Subset\Omega_\lambda\setminus\{0^\lambda\}.
\]
Since \(B\) is at positive distance from both poles,
Lemma~\ref{lem:away-from-poles} gives
\[
w_\lambda\in\mathcal D^s(B),
\qquad
(-\Delta)^s w_\lambda=c_\lambda(x)w_\lambda
\quad\text{in }B,
\qquad c_\lambda\in L^\infty(B),
\]
in the weak sense on \(H_0^s(B)\). Together with antisymmetry with respect
to \(T_\lambda\), the bounded coefficient, and nonnegativity in
\(\Sigma_\lambda\setminus B\), these are precisely the hypotheses needed
for the strong maximum principle \cite[Corollary 3.4]{FallJarohs}.
Hence either \(w_\lambda\equiv0\) in
\(\Sigma_\lambda\setminus\{0^\lambda\}\), or
\(w_\lambda(x_0)>0\). Since \(x_0\) was arbitrary, the conclusion follows.
\end{proof}

\section{The moving plane argument}
\label{sec:moving-plane}

We now apply the moving-plane method. The proof follows the classical three-step structure: first we start the procedure, then we continue the plane up to the first geometrically critical position, and finally we show that the critical plane passes through the singular point. Repeating the argument in every direction forces the domain to be centered at the pole.

We develop the argument first in the direction \(e_1\), and set
\[
a:=\inf_{x\in\Omega}x_1.
\]
We define the first geometrically critical position by
\[
\bar\lambda
:=
\sup\left\{
\ell\in(a,0]:
\Omega'_\mu\subset\Omega
\ \text{for every }\mu\in(a,\ell)
\right\}.
\]
For \(N\geq2\), the standard geometric alternative for a \(C^2\) open set
states that, if \(\bar\lambda<0\), then at \(\bar\lambda\) either an internal
tangency occurs away from the plane or \(T_{\bar\lambda}\) is orthogonal to
\(\partial\Omega\); see Serrin's original moving-plane argument
\cite{Serrin}. When \(N=1\), there is no tangential direction and hence no orthogonality alternative; only endpoint contact can occur.  Define
\[
\Lambda
:=
\left\{
\lambda\in(a,\bar\lambda):
w_\mu\ge0
\text{ in }\Omega_\mu\setminus\{0^\mu\}
\text{ for every }\mu\in(a,\lambda]
\right\},
\]
and
\[
\lambda^*:=\sup\Lambda.
\]

Before starting the plane, we isolate how non-removability rules out a symmetry position reached before the plane meets the pole.

\begin{lemma}
\label{lem:symmetry-removability}
Let \(\lambda<0\) satisfy \(\Omega'_\lambda\subset\Omega\). If
\(w_\lambda\equiv0\) in \(\Sigma_\lambda\setminus\{0^\lambda\}\), then the singularity of \(u\) at
the origin is removable in the sense of Definition~\ref{def:removable-singularity}.
\end{lemma}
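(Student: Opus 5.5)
The idea is that if \(w_\lambda\equiv0\) in \(\Sigma_\lambda\setminus\{0^\lambda\}\), then by antisymmetry \(w_\lambda\equiv0\) on all of \(\mathbb R^N\setminus\{0,0^\lambda\}\). In other words, \(u(x)=u(x^\lambda)\) for every \(x\neq0,0^\lambda\). Since \(\lambda<0\), the pole \(0\) lies in \(\{x_1>\lambda\}\) and is the mirror image of \(0^\lambda\in\Sigma_\lambda\). The behaviour of \(u\) near \(0\) is therefore a copy of its behaviour near \(0^\lambda\), and we first determine what happens near \(0^\lambda\).

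\textbf{Step 1: \(u\) is bounded and continuous near \(0^\lambda\).} There are two cases. If \(0^\lambda\notin\overline\Omega\), then \(u\equiv0\) near \(0^\lambda\), so by reflection \(u\equiv0\) near \(0\). This contradicts \(u>0\) in \(\Omega\setminus\{0\}\), so the hypothesis cannot hold at all, and the statement is vacuous in this case. If \(0^\lambda\in\overline{\Omega}\), then \(0^\lambda\in\Sigma_\lambda\) and \(0^\lambda\neq0\), and \(u\in C(\mathbb R^N\setminus\{0\})\) is continuous at \(0^\lambda\). So \(u\) is bounded near \(0^\lambda\) and has a finite limit there. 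By the symmetry \(u=u_\lambda\), the function \(u\) has the finite limit \(u(0^\lambda)\) at the origin. We define \(\widetilde u(0):=u(0^\lambda)\), so that \(\widetilde u\in C(\mathbb R^N)\). Likewise, \(W^{s,2}\) regularity near \(0^\lambda\) transfers by reflection to a neighborhood of \(0\), since the reflection is an isometry. This gives \(\widetilde u\in W^{s,2}_{\loc}(\mathbb R^N)\), and the \(L^1_s\) condition is unchanged.

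\textbf{Step 2: the equation holds across the origin.} Take \(\phi\in C_c^\infty(\Omega)\). We must show \(\mathcal E(\widetilde u,\phi)=\int_\Omega f(\widetilde u)\phi\). Write \(\phi=\phi\psi_{0,\varepsilon}+\phi\eta_{0,\varepsilon}\). The first piece is admissible in the punctured formulation, after smoothing; \(\psi_{0,\varepsilon}\) is smooth, so \(\phi\psi_{0,\varepsilon}\in C_c^\infty(\Omega\setminus\{0\})\). For the second piece, the reflected function \(\phi_\varepsilon^\lambda(x):=(\phi\eta_{0,\varepsilon})(x^\lambda)\) is supported in \(B_{2\varepsilon}(0^\lambda)\). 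By the reflection invariance of \(\mathcal E\) and \(\widetilde u=\widetilde u_\lambda\),
\[
\mathcal E(\widetilde u,\phi\eta_{0,\varepsilon})=\mathcal E(\widetilde u,\phi_\varepsilon^\lambda).
\]
The right-hand side is an ordinary pairing in which \(\widetilde u\) is \(H^s\) near \(0^\lambda\). In particular \(\widetilde u\in H^s(B_1(0))\) by Step 1, so we can instead bound it directly:
\[
|\mathcal E(\widetilde u,\phi\eta_{0,\varepsilon})|\le C\|\phi\eta_{0,\varepsilon}\|_{H^s}+\text{tail}.
\]
Here the local part is controlled by Cauchy–Schwarz with the \(H^s(B_1)\) energy of \(\widetilde u\). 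The tail part is bounded by \(\|\widetilde u\|_{L^1_s}\|\phi\eta_{0,\varepsilon}\|_{L^1}\). Both tend to zero by \eqref{eq:capacity-cutoffs}, since \(\|\phi\eta_{0,\varepsilon}\|_{H^s}\to0\); this uses \(N>2s\) and the fact that \(\phi\) is smooth and bounded. The same reasoning gives \(\int f(\widetilde u)\phi\eta_{0,\varepsilon}\to0\), because \(f(\widetilde u)\) is bounded near \(0\). Finally, \(\mathcal E(\widetilde u,\phi\psi_{0,\varepsilon})=\int f(u)\phi\psi_{0,\varepsilon}\to\int f(\widetilde u)\phi\). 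This establishes removability.

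\textbf{Main obstacle.} The delicate point is Step 1, specifically the placement of \(0^\lambda\) and the local energy near \(0\). We need \(u\in W^{s,2}\) on a full neighborhood of \(0^\lambda\), including when \(0^\lambda\) lies near \(\partial\Omega\). This follows from \(u\in W^{s,2}_{\loc}(\mathbb R^N\setminus\{0\})\), since \(0^\lambda\neq0\). To get the Gagliardo seminorm of \(\widetilde u\) on \(B_\rho(0)\), we use that \(B_\rho(0)\) is the isometric image of \(B_\rho(0^\lambda)\), which is valid for \(\rho<|\lambda|\). The seminorm therefore equals a finite quantity.
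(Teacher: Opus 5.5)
Your proposal is correct and follows essentially the same route as the paper: you use the symmetry \(u=u_\lambda\) to transfer continuity and \(W^{s,2}\)-regularity from a neighborhood of \(0^\lambda\) to a neighborhood of the origin, and then pass the punctured weak equation across the origin with the zero-capacity cutoffs \(\psi_{0,\varepsilon}\) and a local/tail splitting of the energy. The case distinction on whether \(0^\lambda\in\overline\Omega\) and the reflected pairing \(\mathcal E(\widetilde u,\phi_\varepsilon^\lambda)\) are unnecessary but harmless: continuity at \(0^\lambda\) holds in either case, you end up bounding \(\mathcal E(\widetilde u,\phi\eta_{0,\varepsilon})\) directly anyway, and the tail estimate also needs the easy term \(\|\widetilde u\|_{L^\infty(B_{1/2})}\|\phi\eta_{0,\varepsilon}\|_{L^1}\), which also tends to zero.
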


\begin{proof}
Choose \(r>0\) such that
\[
B_r(0)\Subset\Omega,
\qquad
B_r(0^\lambda)\Subset\Sigma_\lambda.
\]
For \(y\in B_r(0)\setminus\{0\}\), set \(x:=y^\lambda\). Since reflection
is an isometry,
\[
x\in B_r(0^\lambda)\setminus\{0^\lambda\}
\subset\Sigma_\lambda\setminus\{0^\lambda\}.
\]
Hence, by the hypothesis of the lemma,
\[
0=w_\lambda(x)=u(x^\lambda)-u(x)=u(y)-u(y^\lambda),
\]
and therefore \(u(y)=u(y^\lambda)\).
Since, for \(\lambda < 0\), \(0^\lambda\neq0\) and
\[
u\in W^{s,2}_{\loc}(\mathbb R^N\setminus\{0\})
\cap C(\mathbb R^N\setminus\{0\}),
\]
we have
\[
u\in W^{s,2}(B_r(0^\lambda))\cap C(B_r(0^\lambda)).
\]
As reflection maps \(B_r(0)\) isometrically onto \(B_r(0^\lambda)\), it follows that
\[
y\mapsto u(y^\lambda)\in W^{s,2}(B_r(0))\cap C(B_r(0)).
\]
It therefore defines a value at the origin and agrees almost everywhere with
\(u\) on the punctured ball. Assigning this value to \(u(0)\) gives an
extension \(\widetilde u\) such that
\[
\widetilde u\in W^{s,2}_{\loc}(\mathbb R^N)
\cap L^1_s(\mathbb R^N)\cap C(\mathbb R^N).
\]
In particular, \(\widetilde u\) and \(f(\widetilde u)\) are bounded near the
origin.

It remains to extend the equation. Let \(\phi\in C_c^\infty(\Omega)\), and let
\(\eta_\varepsilon:=\eta_{0,\varepsilon}\) and
\(\psi_\varepsilon:=1-\eta_\varepsilon\) be the capacity cutoffs in
\eqref{eq:capacity-cutoffs}. Then
\(\phi\psi_\varepsilon\in C_c^\infty(\Omega\setminus\{0\})\), so
\[
\mathcal E(\widetilde u,\phi\psi_\varepsilon)
=
\int_\Omega f(\widetilde u)\phi\psi_\varepsilon\,\dd x.
\]
The construction gives
\([\eta_\varepsilon]_{H^s(\mathbb R^N)}\to0\) and
\(\|\eta_\varepsilon\|_{L^2}\to0\). Multiplication by the fixed smooth
function \(\phi\) therefore yields
\[
\|\phi\eta_\varepsilon\|_{H^s(\mathbb R^N)}\longrightarrow0.
\]
Arguing as in the local-tail decomposition used in the proof of
Lemma~\ref{lem:away-from-poles}, we split the bilinear form into a fixed
compact neighborhood of \(\supp\phi\) and its complement. Local
\(H^s\)-regularity of \(\widetilde u\) controls the first part, whereas the \(L_s^1\)-tail controls the second.
Consequently,
\[
\mathcal E(\widetilde u,\phi\psi_\varepsilon)
\longrightarrow
\mathcal E(\widetilde u,\phi).
\]
The right-hand side converges by dominated convergence. Hence
\[
\mathcal E(\widetilde u,\phi)
=
\int_\Omega f(\widetilde u)\phi\,\dd x,
\]
which proves removability.
\end{proof}

Consequently, non-removability yields the exclusion used below:
\begin{equation}\label{eq:no-premature-symmetry}
\lambda<0
\quad\Longrightarrow\quad
w_\lambda\not\equiv0\ \text{in }\Sigma_\lambda\setminus\{0^\lambda\}.
\end{equation}

With this notation, the first step is to start the plane from the left,
where the cap is small and separated from both singular points.

\begin{proposition}[The plane starts]\label{prop:plane-starts}
The set \(\Lambda\) is nonempty.
\end{proposition}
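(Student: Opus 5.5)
The plan is to show that every \(\lambda\) slightly larger than \(a\) belongs to \(\Lambda\), by applying Lemma~\ref{lem:small-domain-comparison} with \(D=\Omega_\lambda\) itself.

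\emph{Geometric preliminaries.} Since \(\Omega\) is a bounded \(C^2\) open set, the standard geometric starting step of Serrin's argument applies: there is \(\lambda_0\in(a,0)\) with \(\lambda_0<\bar\lambda\) such that \(\Omega'_\mu\subset\Omega\) for every \(\mu\in(a,\lambda_0]\). For \(N=1\) this is immediate, since \(\Omega\) is a finite union of intervals. Also, \(|\Omega_\mu|\to0\) as \(\mu\to a^+\), because \(\Omega\) is bounded and \(\Omega_\mu\subset\{a<x_1<\mu\}\). Since \(\bar\lambda\le0\), we may in addition take \(\lambda_0<\bar\lambda\), so that \((a,\lambda_0]\subset(a,\bar\lambda)\).

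\emph{Uniform Lipschitz bound and small cap.} Fix \(\lambda_1\in(a,\lambda_0]\) and set \(M:=\|u\|_{L^\infty(\Omega_{\lambda_1})}\). This is finite because \(\overline{\Omega_{\lambda_1}}\) is a compact subset of \(\overline\Omega\setminus\{0\}\), since \(\lambda_1<0\), and \(u\) is continuous there. For \(\mu\le\lambda_1\) we have \(M_\mu\le M\), hence \(L_{M_\mu}\le L_M\). By \eqref{eq:eigenvalue-measure-bound}, \(\lambda_1(\Omega_\mu)\ge\alpha_{N,s}|\Omega_\mu|^{-2s/N}\to+\infty\) as \(\mu\to a^+\). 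We can therefore choose \(\lambda\in(a,\lambda_1]\) such that \(L_M<\lambda_1(\Omega_\mu)\) for every \(\mu\in(a,\lambda]\).

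\emph{Checking the hypotheses of Lemma~\ref{lem:small-domain-comparison}.} For each such \(\mu\), take \(D:=\Omega_\mu\). We must check that \(w_\mu\ge0\) a.e. in \(\Sigma_\mu\setminus\Omega_\mu\). For \(x\in\Sigma_\mu\setminus\Omega\) we have \(u(x)=0\) and \(u_\mu(x)=u(x^\mu)\ge0\), since \(u\ge0\) everywhere (it is positive in \(\Omega\setminus\{0\}\) and zero outside \(\Omega\)). The point \(x^\mu\) might be the pole \(0\), but that is a single point and therefore a null set. Hence \(w_\mu\ge0\) a.e. there. The lemma then yields \(w_\mu\ge0\) in \(\Sigma_\mu\setminus\{0^\mu\}\), in particular in \(\Omega_\mu\setminus\{0^\mu\}\), for every \(\mu\in(a,\lambda]\). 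Thus \(\lambda\in\Lambda\).

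\emph{Expected obstacle.} The only delicate point is the uniformity in \(\mu\) of the Lipschitz constant. This is why \(M\) is fixed on a larger cap \(\Omega_{\lambda_1}\) that stays away from the pole. The other care needed is to ensure that the chosen \(\lambda\) lies strictly below \(\bar\lambda\), as the definition of \(\Lambda\) requires. Both are handled by taking \(\lambda\) close enough to \(a\).
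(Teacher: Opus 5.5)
Your proposal is correct and follows essentially the same route as the paper: take caps near \(a\) that stay away from the pole, bound \(u\) (hence the Lipschitz constant of \(f\)) uniformly, use \eqref{eq:eigenvalue-measure-bound} to force \(L_M<\lambda_1(\Omega_\mu)\), and apply Lemma~\ref{lem:small-domain-comparison} with \(D=\Omega_\mu\) after checking \(w_\mu\ge0\) in \(\Sigma_\mu\setminus\Omega_\mu\). The paper's additional appeal to Lemma~\ref{lem:strong-maximum} and \eqref{eq:no-premature-symmetry} is not needed for membership in \(\Lambda\). Conversely, you make explicit both the choice \(\lambda<\bar\lambda\) and the uniformity over all \(\mu\in(a,\lambda]\), which the paper leaves implicit; only note that naming a plane position \(\lambda_1\) clashes with the eigenvalue notation \(\lambda_1(D)\).
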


\begin{proof}
We prove that, when \(\lambda\) is sufficiently close to \(a\), the cap
\(\Omega_\lambda\) is small, its reflection \(\Omega'_\lambda\) is contained
in \(\Omega\), and neither pole belongs to the cap.  Since \(0\in\Omega\), we
have \(a<0\).  The standard initial-position geometry for a \(C^2\) open set
gives \(\sigma_1>0\) such that
\[
\Omega'_\lambda\subset\Omega
\qquad\text{for every }\lambda\in(a,a+\sigma_1).
\]
Choose now
\(\sigma_0\in(0,\sigma_1)\) so small that
\[
a+\sigma_0<\frac a2<0.
\]
Then, for every \(\lambda\in(a,a+\sigma_0)\), one has
\[
\lambda<0
\]
and hence \(0\notin\Omega_\lambda\). Moreover,
\[
(0^\lambda)_1=2\lambda<a,
\]
so that \(0^\lambda\notin\Omega\), by the definition of \(a\). In particular,
\[
0\notin\Omega_\lambda,
\qquad
0^\lambda\notin\Omega_\lambda.
\]
Thus, in the initial position of the moving plane, neither \(u\) nor
\(u_\lambda\) has a singularity in \(\Omega_\lambda\). Indeed, for
\(\lambda\in(a,a+\sigma_0)\),
\[
\dist(\Omega_\lambda,0)\geq |a|/2,
\]
while, since \(x^\lambda=0\) if and only if \(x=0^\lambda\),
\[
|x^\lambda|
=
|x-0^\lambda|
\ge |x_1-2\lambda|
=
x_1-2\lambda
\ge a-2(a+\sigma_0)>0
\qquad
\text{for }x\in\overline\Omega_\lambda.
\]
Thus the points \(x\) and \(x^\lambda\) range in a fixed compact subset of
\(\mathbb R^N\setminus\{0\}\).  By continuity of \(u\), after decreasing
\(\sigma_0\) if necessary there is \(M_0>0\), independent of
\(\lambda\in(a,a+\sigma_0)\), such that
\[
0\leq u,u_\lambda\leq M_0
\qquad\text{in }\Omega_\lambda.
\]
In \(\Omega_\lambda\),
\[
(-\Delta)^s w_\lambda=f(u_\lambda)-f(u)=c_\lambda(x)w_\lambda,
\]
where \(c_\lambda\in L^\infty(\Omega_\lambda)\), uniformly for \(\lambda\) close to \(a\). Since
\[
|\Omega_\lambda|\to0\quad\text{as }\lambda\to a,
\]
by \eqref{eq:eigenvalue-measure-bound} we have
\[
\lambda_1(\Omega_\lambda)\ge \alpha_{N,s}|\Omega_\lambda|^{-\frac{2s}{N}}\to+\infty.
\]
Since \(M_\lambda\leq M_0\), we have
\(L_{M_\lambda}\leq L_{M_0}\). Therefore, after taking
\(\lambda\) close enough to \(a\), we obtain
\[
L_{M_\lambda}<\lambda_1(\Omega_\lambda).
\]
For \(x\in\Sigma_\lambda\setminus\Omega_\lambda\), one has \(u(x)=0\)
and \(u_\lambda(x)\geq0\).  Hence the negative set of \(w_\lambda\) in
\(\Sigma_\lambda\) is contained in \(\Omega_\lambda\).  Lemma~\ref{lem:small-domain-comparison}, applied with
\(D=\Omega_\lambda\), gives
\[
w_\lambda\ge0\quad\text{in }\Sigma_\lambda\setminus\{0^\lambda\}.
\]
By applying Lemma~\ref{lem:strong-maximum}, either
\(w_\lambda\equiv0\) in \(\Sigma_\lambda\setminus\{0^\lambda\}\), or
\(w_\lambda>0\) in \(\Omega_\lambda\setminus\{0^\lambda\}\).  The first
alternative cannot occur by \eqref{eq:no-premature-symmetry}.
Hence \((a,a+\sigma)\subset\Lambda\) for some \(\sigma>0\).
\end{proof}

Having started the plane, we next show that the comparison can be continued up to the first geometrically critical position.

\begin{proposition}[Continuation up to the critical position]\label{prop:continuation}
One has
\[
\lambda^*=\bar\lambda.
\]
\end{proposition}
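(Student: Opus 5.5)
We argue by contradiction. Since \(\Lambda\subset(a,\bar\lambda)\), we have \(\lambda^*\le\bar\lambda\); assume \(\lambda^*<\bar\lambda\), so in particular \(\lambda^*<0\). The first step is closedness. By continuity of \(u\) on \(\mathbb R^N\setminus\{0\}\), for each fixed \(x\in\Omega_{\lambda^*}\setminus\{0^{\lambda^*}\}\) we have \(w_\mu(x)\to w_{\lambda^*}(x)\) as \(\mu\uparrow\lambda^*\). Hence \(w_{\lambda^*}\ge0\) in \(\Omega_{\lambda^*}\setminus\{0^{\lambda^*}\}\), and \(\lambda^*\in\Lambda\). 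In \(\Sigma_{\lambda^*}\setminus\Omega_{\lambda^*}\) we have \(u=0\le u_{\lambda^*}\), so \(w_{\lambda^*}\ge0\) holds in all of \(\Sigma_{\lambda^*}\setminus\{0^{\lambda^*}\}\). Since \(\lambda^*<\bar\lambda\), the inclusion \(\Omega'_{\lambda^*}\subset\Omega\) holds. Lemma~\ref{lem:strong-maximum} together with \eqref{eq:no-premature-symmetry} then gives \(w_{\lambda^*}>0\) in \(\Omega_{\lambda^*}\setminus\{0^{\lambda^*}\}\).

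The second step pushes the plane slightly beyond \(\lambda^*\) using Lemma~\ref{lem:small-domain-comparison}. Fix \(\delta_0>0\) with \(\lambda^*+\delta_0<\bar\lambda\) and \(\lambda^*+\delta_0<0\). Then \(\overline{\Omega_{\lambda}}\) stays in a fixed compact subset of \(\overline\Omega\setminus\{0\}\), which gives a uniform bound \(M_\lambda\le M_0\) and a uniform Lipschitz constant \(L_{M_0}\). Let \(\alpha_{N,s}|D|^{-2s/N}>2L_{M_0}\) define a small measure threshold \(\kappa\). Choose a compact set \(K\subset\Omega_{\lambda^*}\) such that \(|\Omega_{\lambda^*}\setminus K|<\kappa/2\), and such that \(K\) avoids a small ball \(B_{r}(0^{\lambda^*})\) and a neighborhood of \(T_{\lambda^*}\). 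By continuity and compactness, \(\min_K w_{\lambda^*}>0\), and uniform continuity of \(u\) on compacts away from the pole gives \(w_\lambda>0\) on \(K\) for \(\lambda\in[\lambda^*,\lambda^*+\varepsilon)\). Setting \(D:=\Omega_\lambda\setminus K\), we have \(|D|<\kappa\) for \(\varepsilon\) small, since \(|\Omega_\lambda\setminus\Omega_{\lambda^*}|\to0\). The set \(\Sigma_\lambda\setminus D\) consists of \(K\), where \(w_\lambda>0\), together with \(\Sigma_\lambda\setminus\Omega_\lambda\), where \(u=0\). Lemma~\ref{lem:small-domain-comparison} then gives \(w_\lambda\ge0\) in \(\Sigma_\lambda\setminus\{0^\lambda\}\). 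Hence \(\lambda^*+\varepsilon\in\Lambda\) after shrinking, which contradicts the definition of \(\lambda^*\).

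The main obstacle is the reflected pole. The region \(D\) contains a small ball around \(0^{\lambda}\), where \(u_\lambda\) blows up. This does not harm Lemma~\ref{lem:small-domain-comparison}, because that lemma was stated for any bounded \(D\subset\Omega_\lambda\) via the capacity cutoff, and on \(\{q>0\}\) one has \(q\le u\le M_\lambda\), so only \(u\), which is bounded on \(\Omega_\lambda\), enters the estimate. We must also check that \(0^\lambda\), which moves with \(\lambda\), stays inside the excluded ball; this holds for \(\varepsilon<r/4\), since \(|0^\lambda-0^{\lambda^*}|=2|\lambda-\lambda^*|\). The continuity of \(w_\lambda\) in \(\lambda\) on \(K\) follows because \(K\) and \(K^\lambda\) lie in a fixed compact subset of \(\mathbb R^N\setminus\{0\}\). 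The compact \(K\) must be chosen so that \(\dist(K^\lambda,0)\) stays positive, which holds because \(K\) avoids \(B_r(0^{\lambda^*})\). Finally, if \(\bar\lambda\) is approached with \(\bar\lambda\le0\), the case \(\lambda^*=0\) cannot arise under the contradiction hypothesis, since \(\lambda^*<\bar\lambda\le0\).
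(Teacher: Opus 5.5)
Your proposal is correct and follows essentially the same route as the paper: a pointwise limit from the left, combined with Lemma~\ref{lem:strong-maximum} and \eqref{eq:no-premature-symmetry}, gives $w_{\lambda^*}>0$ in $\Omega_{\lambda^*}\setminus\{0^{\lambda^*}\}$. A compact $K$ with $|\Omega_{\lambda^*}\setminus K|$ small, uniform continuity on $K$, and Lemma~\ref{lem:small-domain-comparison} on $D=\Omega_\lambda\setminus K$ then push the plane past $\lambda^*$. Your extra bookkeeping is what the paper uses implicitly: you keep $K$ away from $B_r(0^{\lambda^*})$ so that $K^\lambda$ stays off the pole, and you note that only $u$, and not $u_\lambda$, enters the bound $q\le M_\lambda$.
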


\begin{proof}
Suppose, by contradiction, that \(\lambda^*<\bar\lambda\). Choose
\(\sigma_0>0\) so small that
\[
\lambda^*+\sigma_0<\min\{\bar\lambda,0\}.
\]
Let \(\mu_n\in\Lambda\) increase to \(\lambda^*\).  For every
\(x\in\Omega_{\lambda^*}\setminus\{0^{\lambda^*}\}\), one has
\(x\in\Omega_{\mu_n}\) for all large \(n\), and the continuity of \(u\)
away from the two poles gives
\[
w_{\lambda^*}(x)=\lim_{n\to\infty}w_{\mu_n}(x)\geq0.
\]
Thus
\[
w_{\lambda^*}\ge0
\quad\text{in }\Omega_{\lambda^*}\setminus\{0^{\lambda^*}\}.
\]
By Lemma~\ref{lem:strong-maximum}, either \(w_{\lambda^*}\equiv0\) in
\(\Sigma_{\lambda^*}\setminus\{0^{\lambda^*}\}\), or
\[
w_{\lambda^*}>0
\quad\text{in }\Omega_{\lambda^*}\setminus\{0^{\lambda^*}\}.
\]
The first alternative cannot occur by
\eqref{eq:no-premature-symmetry}. Hence the strict inequality
holds.

For every plane
\(\lambda\in[\lambda^*,\lambda^*+\sigma_0]\), one has
\(\Omega_\lambda\subset
\Omega\cap\{x_1\leq\lambda^*+\sigma_0\}\).  The closure of this set is a
compact subset of \(\overline\Omega\setminus\{0\}\).  Therefore there exists
\(M>0\), independent of \(\lambda\) in this interval, such that
\[
0\leq u\leq M
\qquad\text{in }\Omega_\lambda.
\]
In particular, \(M_\lambda\leq M\) and hence
\(L_{M_\lambda}\leq L_M\).
Fix \(\varepsilon_0>0\) so small that
\begin{equation}\label{eq:small-continuation-measure}
L_M<\alpha_{N,s}(2\varepsilon_0)^{-2s/N}.
\end{equation}
Choose a compact set
\[
K\Subset\Omega_{\lambda^*}\setminus\{0^{\lambda^*}\}
\]
such that
\[
|\Omega_{\lambda^*}\setminus K|<\varepsilon_0.
\]
Since \(w_{\lambda^*}\) is continuous and strictly positive on \(K\), it
has a positive minimum there.  Since \(K\Subset\Omega_{\lambda^*}\setminus\{0^{\lambda^*}\}\),
the reflected sets \(K^\lambda\) stay uniformly away from the origin
for \(\lambda\) sufficiently close to \(\lambda^*\). Hence \(u\) is
uniformly continuous on a fixed compact set containing all such
\(K^\lambda\), and therefore
\[
\sup_{x\in K}
|w_\lambda(x)-w_{\lambda^*}(x)|
\longrightarrow0
\qquad\text{as }\lambda\to\lambda^*.
\] Uniform continuity therefore gives
\(\bar\sigma\in(0,\sigma_0)\) such that, for every
\(\sigma\in(0,\bar\sigma)\),
\[
K\subset\Omega_{\lambda^*+\sigma},
\qquad
w_{\lambda^*+\sigma}>0
\quad\text{in a neighborhood of }K.
\]
Define \(q_\sigma\) almost everywhere by
\[
q_\sigma
:=
\mathbf1_{\Sigma_{\lambda^*+\sigma}}
w_{\lambda^*+\sigma}^{-}.
\]
Then
\[
q_\sigma=0
\quad\text{a.e. in }\mathbb R^N\setminus D_\sigma,
\qquad
D_\sigma:=\Omega_{\lambda^*+\sigma}\setminus K.
\]
Furthermore,
\[
|D_\sigma|
\leq
|\Omega_{\lambda^*}\setminus K|
+|\Omega_{\lambda^*+\sigma}\setminus\Omega_{\lambda^*}|
<2\varepsilon_0
\]
after decreasing \(\bar\sigma\), since the measure of the last strip tends
to zero with \(\sigma\).  Hence \eqref{eq:eigenvalue-measure-bound}
and \eqref{eq:small-continuation-measure} give
\[
L_{M_{\lambda^*+\sigma}}\leq L_M<\lambda_1(D_\sigma),
\]
uniformly for \(0<\sigma<\bar\sigma\).  Lemma~\ref{lem:small-domain-comparison} now yields
\[
q_\sigma\equiv0
\qquad\text{for every }\sigma\in(0,\bar\sigma).
\]
Thus
\[
w_{\lambda^*+\sigma}\ge0
\quad\text{in }\Sigma_{\lambda^*+\sigma}
\setminus\{0^{\lambda^*+\sigma}\}
\]
for every such \(\sigma\).  It follows that
\(\lambda^*+\bar\sigma/2\in\Lambda\), contradicting the definition of
\(\lambda^*\). Hence \(\lambda^*=\bar\lambda\).
\end{proof}

The same limiting argument used in the proof of Proposition~\ref{prop:continuation}, now with \(\lambda^*=\bar\lambda\), gives
\[
w_{\bar\lambda}\geq0
\quad\text{in }\Omega_{\bar\lambda}\setminus\{0^{\bar\lambda}\}.
\]
For \(x\in\Sigma_{\bar\lambda}\setminus\Omega_{\bar\lambda}\), the exterior
condition gives \(u(x)=0\) and \(u(x^{\bar\lambda})\geq0\).  Therefore
\begin{equation}\label{eq:critical-halfspace-sign}
w_{\bar\lambda}\geq0
\quad\text{in }\Sigma_{\bar\lambda}\setminus\{0^{\bar\lambda}\}.
\end{equation}
If \(\bar\lambda<0\), the alternative
\(w_{\bar\lambda}\equiv0\) in
\(\Sigma_{\bar\lambda}\setminus\{0^{\bar\lambda}\}\) is excluded by
\eqref{eq:no-premature-symmetry}.  Hence Lemma~\ref{lem:strong-maximum} yields
\[
w_{\bar\lambda}>0
\quad\text{in }\Omega_{\bar\lambda}\setminus\{0^{\bar\lambda}\}.
\]

To exclude the two critical-contact alternatives, we use the Hopf lemma and
the corner lemma of Fall--Jarohs; see, respectively,
\cite[Proposition 3.3 and Lemma 4.4]{FallJarohs}.  Our reflected difference
is not globally regular because of the two isolated poles.  This causes no
difficulty at a contact point separated from them: Lemma~\ref{lem:away-from-poles} places \(w_{\bar\lambda}\) in the local class
\(\mathcal D^s\) required in \cite{FallJarohs} and provides there the weak
equation with a bounded coefficient.  The half-space sign and the geometric
hypotheses are verified explicitly in the two cases below.

We first rule out the internal tangency alternative by combining the fractional
Hopf lemma with the constant overdetermined datum.

\begin{lemma}[Exclusion of internal tangency]\label{lem:no-tangency}
At the critical position \(\bar\lambda<0\), internal tangency of
\(\Omega'_{\bar\lambda}\) with \(\partial\Omega\) cannot occur.
\end{lemma}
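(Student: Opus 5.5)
We argue by contradiction: suppose that at \(\bar\lambda<0\) the reflected cap \(\Omega'_{\bar\lambda}\) is internally tangent to \(\partial\Omega\) at a point \(P\notin T_{\bar\lambda}\). Set \(Q:=P^{\bar\lambda}\). Then \(Q\in\partial\Omega\cap\Sigma_{\bar\lambda}\), and \(P\in\partial\Omega\) lies in \(\{x_1>\bar\lambda\}\). Since \(\bar\lambda<0\), the closed cap \(\overline{\Omega_{\bar\lambda}}\) is a compact subset of \(\overline\Omega\setminus\{0\}\). We will also check that \(Q\neq 0^{\bar\lambda}\). Indeed, \(0^{\bar\lambda}\) is the reflection of the interior point \(0\), and \(P\in\partial\Omega\) cannot equal \(0\). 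So \(Q\) has positive distance from both poles. We may therefore fix a small ball \(B\) centred near \(Q\), with \(B\cap\Sigma_{\bar\lambda}\) at positive distance from \(\{0,0^{\bar\lambda}\}\). On the Lipschitz set \(A:=B\cap\Omega_{\bar\lambda}\), Lemma~\ref{lem:away-from-poles} gives \(w_{\bar\lambda}\in\mathcal D^s(A)\) and \((-\Delta)^s w_{\bar\lambda}=c_{\bar\lambda}w_{\bar\lambda}\), with \(c_{\bar\lambda}\in L^\infty(A)\).

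Next we gather the hypotheses of the fractional Hopf lemma \cite[Proposition 3.3]{FallJarohs}. The function \(w_{\bar\lambda}\) is antisymmetric with respect to \(T_{\bar\lambda}\). By \eqref{eq:critical-halfspace-sign} it is nonnegative in \(\Sigma_{\bar\lambda}\setminus\{0^{\bar\lambda}\}\), and by the strong maximum principle discussion preceding the statement it is strictly positive in \(\Omega_{\bar\lambda}\setminus\{0^{\bar\lambda}\}\). The boundary point \(Q\) lies on the \(C^2\) portion \(\partial\Omega\cap\Sigma_{\bar\lambda}\), so it satisfies an interior ball condition. The Hopf lemma then yields \(C>0\) such that
\[
w_{\bar\lambda}(Q+t\nu_Q)\ge C\,t^s \qquad\text{for small }t>0,
\]
where \(\nu_Q\) is the inward normal. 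In terms of the boundary quotient this reads
\[
\liminf_{t\to0^+}\frac{w_{\bar\lambda}(Q+t\nu_Q)}{t^s}>0 .
\]
A technical point is that the Hopf lemma of Fall--Jarohs is stated for antisymmetric supersolutions in the class \(\mathcal D^s\) with nonnegativity in the whole half-space. This is exactly what we have, except at the isolated point \(0^{\bar\lambda}\), which is a null set. So the weak hypotheses hold; this verification is the main step needing care.

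For the contradiction we compute the same quotient from the overdetermined datum. By tangency, \(\partial\Omega'_{\bar\lambda}\) and \(\partial\Omega\) share the point \(P\) and the same normal line there. The reflection maps the inward normal segment at \(Q\) onto the inward normal segment at \(P\), that is, \((Q+t\nu_Q)^{\bar\lambda}=P+t\nu_P\). Near \(P\) the point \(P\) is away from \(0\), and near \(Q\) we are also away from the pole. Proposition~\ref{prop:boundary-quotient} together with the constant trace \(u/\delta^s=-c\) then gives
\[
\frac{u(P+t\nu_P)}{t^s}\to -c
\qquad\text{and}\qquad
\frac{u(Q+t\nu_Q)}{t^s}\to -c .
\]
Here \(\delta(Q+t\nu_Q)=t\) and \(\delta(P+t\nu_P)=t\) for small \(t\), by the \(C^2\) tubular neighbourhood. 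Hence
\[
\frac{w_{\bar\lambda}(Q+t\nu_Q)}{t^s}=\frac{u(P+t\nu_P)-u(Q+t\nu_Q)}{t^s}\longrightarrow 0,
\]
which contradicts the Hopf bound. The case \(N=1\), where only endpoint contact occurs, is covered by the same computation with \(P,Q\) the endpoints.
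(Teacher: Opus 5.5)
Your proposal is correct and follows essentially the same route as the paper. You apply the Fall--Jarohs Hopf lemma to \(w_{\bar\lambda}\) at the cap-side contact point, using Lemma~\ref{lem:away-from-poles}, strict positivity in \(\Omega_{\bar\lambda}\setminus\{0^{\bar\lambda}\}\) and a.e.\ nonnegativity in \(\Sigma_{\bar\lambda}\), and you contradict it with the overdetermined datum, since reflection maps the two normal segments onto each other and the normal quotient of \(w_{\bar\lambda}\) therefore tends to \(c-c=0\).

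The paper also makes explicit some routine choices you gloss over. It takes nested tangent balls \(A\subset U\) in place of \(B\cap\Omega_{\bar\lambda}\), which need not be Lipschitz for an arbitrary ball, and shrinks them so that \(\|c_{\bar\lambda}\|_{L^\infty(U)}<\lambda_1(A)\). It exhibits a compact set \(K\) with \(\operatorname*{ess\,inf}_K w_{\bar\lambda}>0\). Finally, it reads \(u(\cdot)/t^s\to -c\) directly from the definition of \((\partial_\eta)^s u\), without needing Proposition~\ref{prop:boundary-quotient}.
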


\begin{proof}
Assume that internal tangency occurs at a point
\(P\in\partial\Omega\cap\Sigma_{\bar\lambda}\), so that
\(P^{\bar\lambda}\in\partial\Omega\).  Since \(\bar\lambda<0\), one has
\(P\neq0\).  Moreover, \(P\neq0^{\bar\lambda}\), because otherwise
\(P^{\bar\lambda}=0\in\Omega\) could not be a boundary point.  By
Proposition~\ref{prop:continuation} and Lemma~\ref{lem:strong-maximum}, we obtain
\[
w_{\bar\lambda}>0
\quad\text{in }\Omega_{\bar\lambda}\setminus\{0^{\bar\lambda}\}.
\]
Since
\[
u(P)=0,\qquad u_{\bar\lambda}(P)=u(P^{\bar\lambda})=0,
\]
we have \(w_{\bar\lambda}(P)=0\).

Let \(\eta\) denote the exterior normal at \(P\). Since \(P\in\Sigma_{\bar\lambda}\) and is separated from both poles, we may
choose \(r>0\) so small that the two nested tangent balls
\[
A:=B_r(P-r\eta),
\qquad
U:=B_{2r}(P-2r\eta)
\]
satisfy
\[
A\subset U \subset\Omega_{\bar\lambda},
\qquad
\overline{U}\Subset\Sigma_{\bar\lambda}
\setminus\{0,0^{\bar\lambda}\}.
\]
The coefficient \(c_{\bar\lambda}\) is bounded on a fixed neighborhood of
\(P\).  Since
\(\lambda_1(A)=r^{-2s}\lambda_1(B_1)\), decreasing \(r\) if necessary gives
\[
\|c_{\bar\lambda}\|_{L^\infty(U)}
<\lambda_1(A).
\]
We also take \(r\) small enough that
\(\Omega_{\bar\lambda}\setminus\overline A\) contains a nonempty open set.
The strict positivity and continuity of \(w_{\bar\lambda}\) then provide a
compact set
\[
K\Subset\Omega_{\bar\lambda}\setminus\overline A,
\qquad |K|>0,
\]
such that
\[
\operatorname*{ess\,inf}_K w_{\bar\lambda}>0.
\]
Lemma~\ref{lem:away-from-poles} gives
\(w_{\bar\lambda}\in\mathcal D^s(U)\) and the equation
\[
(-\Delta)^s w_{\bar\lambda}
=c_{\bar\lambda}(x)w_{\bar\lambda}
\quad\text{in }U,
\]
while \eqref{eq:critical-halfspace-sign} gives nonnegativity on
\(\Sigma_{\bar\lambda}\setminus\{0^{\bar\lambda}\}\), and hence almost
everywhere in \(\Sigma_{\bar\lambda}\).  Thus all the hypotheses of
\cite[Proposition 3.3]{FallJarohs} are satisfied with equation domain
\(U\) and interior ball \(A\).  It follows that
\[
(\partial_\eta)^s w_{\bar\lambda}(P)<0.
\]
On the other hand, reflection maps \(\eta\) to the exterior normal at
\(P^{\bar\lambda}\).  By the overdetermined condition,
\[
\begin{aligned}
(\partial_\eta)^s w_{\bar\lambda}(P)
&=-\lim_{t\to0^+}
\frac{u((P-t\eta)^{\bar\lambda})-u(P-t\eta)}{t^s}\\
&=c-c=0,
\end{aligned}
\]
which is a contradiction.
\end{proof}

It remains to exclude orthogonal contact; here the corner lower bound is
contradicted by the tangential cancellation \eqref{eq:TC}.

\begin{lemma}[Exclusion of the corner case]\label{lem:no-corner}
Assume \(N\geq2\).  At the critical position \(\bar\lambda<0\), the moving
plane \(T_{\bar\lambda}\) cannot be orthogonal to \(\partial\Omega\) at a
point of \(T_{\bar\lambda}\cap\partial\Omega\).
\end{lemma}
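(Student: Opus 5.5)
We argue by contradiction: assume that at the critical position \(\bar\lambda<0\) the plane \(T_{\bar\lambda}\) is orthogonal to \(\partial\Omega\) at some \(Q\in T_{\bar\lambda}\cap\partial\Omega\). Then \(e_1\in T_Q\partial\Omega\), and \(\nu_Q\) lies in \(T_{\bar\lambda}\), so \(\nu_Q\perp e_1\). Since \(\bar\lambda<0\), \(Q\) has positive distance from \(0\). It also has positive distance from \(0^{\bar\lambda}\), because \(Q_1=\bar\lambda\) while \((0^{\bar\lambda})_1=2\bar\lambda\). As in Lemma~\ref{lem:no-tangency}, we already know that \(w_{\bar\lambda}>0\) in \(\Omega_{\bar\lambda}\setminus\{0^{\bar\lambda}\}\) and that \eqref{eq:critical-halfspace-sign} holds. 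Lemma~\ref{lem:away-from-poles} applies on a small neighborhood \(U\) of \(Q\) intersected with \(\Omega_{\bar\lambda}\), away from both poles. It gives \(w_{\bar\lambda}\in\mathcal D^s\) there and \((-\Delta)^s w_{\bar\lambda}=c_{\bar\lambda}w_{\bar\lambda}\) with \(c_{\bar\lambda}\) bounded.

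These are the hypotheses of the Fall--Jarohs corner lemma \cite[Lemma 4.4]{FallJarohs}. It yields a constant \(\kappa>0\) such that
\[
w_{\bar\lambda}\bigl(Q+t(\nu_Q-e_1)\bigr)\ge \kappa\, t^{1+s}\qquad\text{for small }t>0.
\]
Here the direction \(\nu_Q-e_1\) points into \(\Sigma_{\bar\lambda}\cap\Omega\), which is the cap side. I would check the precise form of the evaluation point in the cited lemma, which may be a general direction in the corner; the statement of \eqref{eq:TC} suggests the bisecting directions \(\nu_Q\pm\tau\).

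Next we compute the left side using \(u=\delta^s\psi\) from Proposition~\ref{prop:boundary-quotient}. Set \(x_t:=Q+t(\nu_Q-e_1)\). Then \(x_t^{\bar\lambda}=Q+t(\nu_Q+e_1)\), since reflection fixes \(Q\) and \(\nu_Q\) and flips \(e_1\). Therefore
\[
w_{\bar\lambda}(x_t)=\delta(x_t^{\bar\lambda})^s\psi(x_t^{\bar\lambda})-\delta(x_t)^s\psi(x_t).
\]
We split this as
\[
\delta(x_t^{\bar\lambda})^s\bigl[\psi(x_t^{\bar\lambda})-\psi(x_t)\bigr]+\bigl[\delta(x_t^{\bar\lambda})^s-\delta(x_t)^s\bigr]\psi(x_t).
\]
In the first term, \(\delta(x_t^{\bar\lambda})\le Ct\), and \eqref{eq:TC} with \(\tau=e_1\) makes the bracket \(o(t)\). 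The whole term is therefore \(o(t^{1+s})\).

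For the second term, \(\psi\) is bounded near \(Q\), so it suffices to show that \(\delta(x_t^{\bar\lambda})^s-\delta(x_t)^s=o(t^{1+s})\). The key geometric input is the maximality of \(\bar\lambda\) in the orthogonal case. Writing \(\partial\Omega\) near \(Q\) as a \(C^2\) graph over the tangent plane, the inclusions \(\Omega'_\mu\subset\Omega\) for \(\mu<\bar\lambda\) force the second fundamental form at \(Q\) to satisfy \(\mathrm{II}_Q(e_1,\xi)=0\) for every tangent \(\xi\perp e_1\). This is the standard computation in Serrin's corner argument. Combined with a Taylor expansion of \(\delta\) at the \(C^2\) level, it should give \(\delta(x_t^{\bar\lambda})-\delta(x_t)=o(t^2)\). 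Since both distances are comparable to \(t\), this yields \(\delta^s\)-difference \(=o(t^{1+s})\).

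Combining the two estimates gives \(w_{\bar\lambda}(x_t)=o(t^{1+s})\), which contradicts the corner lower bound.

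I expect the main obstacle to be this last geometric estimate. The distance terms must cancel to order \(o(t^2)\), not just \(O(t^2)\), and this needs the vanishing mixed curvature at \(Q\) together with care that \(C^2\) regularity (rather than \(C^{2,\alpha}\)) gives a genuine little-\(o\) remainder. My first attempt would be to expand \(\delta(Q+t v)=t\langle v,\nu_Q\rangle-\tfrac{t^2}{2}\mathrm{II}_Q(v_T,v_T)+o(t^2)\) for \(v=\nu_Q\pm e_1\). In this expansion the \(\pm\) terms differ only through \(\mathrm{II}_Q(e_1,\cdot)\) cross terms, and these vanish by the above.

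A secondary point is to verify that the interior-ball and half-space hypotheses of \cite[Lemma 4.4]{FallJarohs} hold. This uses the strict positivity of \(w_{\bar\lambda}\) on a compact set of positive measure, obtained exactly as in the tangency case.
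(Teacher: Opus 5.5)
Your proposal is correct and is essentially the paper's proof: the Fall--Jarohs corner bound $w_{\bar\lambda}(Q+t(\nu_Q-e_1))\ge Ct^{1+s}$ is contradicted by writing $u=\delta^s\psi$ and splitting the difference into a term controlled by \eqref{eq:TC} with $\tau=e_1$ and a term controlled by $\delta(x_t^{\bar\lambda})-\delta(x_t)=o(t^2)$. One simplification: the curvature condition $\mathrm{II}_Q(e_1,\xi)=0$, which you derive from the maximality of $\bar\lambda$, is not needed. Differentiating $|\nabla\delta|^2=1$ gives $D^2\delta(Q)\nu_Q=0$, so for $v=\nu_Q\pm e_1$ both quadratic Taylor terms equal $D^2\delta(Q)[e_1,e_1]$; there are no cross terms at all, and the Peano remainder of a $C^2$ function already gives the required $o(t^2)$.
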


\begin{proof}
Suppose, by contradiction, that the orthogonality case occurs at
\(Q\in T_{\bar\lambda}\cap\partial\Omega\). Since \(Q\in\partial\Omega\)
and \(0\in\Omega\), we have \(Q\neq0\). Moreover, since
\(Q\in T_{\bar\lambda}\),
\[
Q^{\bar\lambda}=Q.
\]
Consequently,
\[
|Q-0^{\bar\lambda}|
=
|Q^{\bar\lambda}-0^{\bar\lambda}|
=
|Q-0|>0.
\]
Thus \(Q\) is separated from both the original pole and the reflected pole.

Since the orthogonality condition implies that \(e_1\) is tangent to
\(\partial\Omega\) at \(Q\), after a rotation fixing the \(e_1\)-axis we
may assume that
\[
\nu_Q=e_2,
\]
where \(\nu_Q\) is the inward unit normal at \(Q\). Set
\[
\bar\eta:=-e_1+e_2.
\]
Since \(\partial\Omega\) is of class \(C^2\) and \(Q\) is separated from
both poles, we may choose \(R>0\) so small that
\[
D:=B_R(Q+Re_2)\subset\Omega,
\qquad
\dist\bigl(\overline D,\{0,0^{\bar\lambda}\}\bigr)>0.
\]
The center \(Q+Re_2\) lies on \(T_{\bar\lambda}\), so \(D\) is symmetric
with respect to \(T_{\bar\lambda}\). Hence
\[
D^*:=D\cap\Sigma_{\bar\lambda}
\subset\Omega_{\bar\lambda}.
\]
Lemma~\ref{lem:away-from-poles} gives
\(w_{\bar\lambda}\in\mathcal D^s(D^*)\), and the strong maximum principle
gives
\[
w_{\bar\lambda}>0
\qquad\text{in }D^*.
\]
The coefficient of its equation is bounded there, while
\eqref{eq:critical-halfspace-sign} gives
\(w_{\bar\lambda}\geq0\) away from the reflected pole in
\(\Sigma_{\bar\lambda}\), and hence almost everywhere in that half-space.
Thus, applying \cite[Lemma 4.4]{FallJarohs}, there exist
\(C>0\) and \(t_0>0\) such that
\begin{equation}\label{eq:corner-lower-bound}
w_{\bar\lambda}(Q+t\bar\eta)
\geq Ct^{1+s}
\qquad\text{for every }t\in(0,t_0).
\end{equation}
We now derive the corresponding upper estimate. Since
\(Q\in T_{\bar\lambda}\),
\[
(Q+t\bar\eta)^{\bar\lambda}
=
Q+t(e_1+e_2).
\]
Set
\[
\psi:=\frac{u}{\delta^s},
\qquad
\bar\delta(x):=\delta(x^{\bar\lambda}).
\]
Since \(e_1\in T_Q\partial\Omega\), the tangential cancellation condition
\eqref{eq:TC}, applied with \(\nu_Q=e_2\) and \(\tau=e_1\), gives
\begin{equation}\label{eq:corner-quotient-upper}
\psi\bigl(Q+t(e_1+e_2)\bigr)
-
\psi\bigl(Q+t(-e_1+e_2)\bigr)
=o(t).
\end{equation}
The distance function is of class \(C^2\) in a tubular neighborhood of
\(\partial\Omega\). Since
\[
\nabla\delta(Q)=e_2
\]
and \(|\nabla\delta|=1\) there, differentiation of
\(|\nabla\delta|^2=1\) gives
\[
D^2\delta(Q)e_2=0.
\]
Taylor's formula therefore yields
\[
\delta(Q+t\bar\eta)
=
t+\frac{t^2}{2}D^2\delta(Q)[e_1,e_1]+o(t^2)
\]
and
\[
\begin{aligned}
\bar\delta(Q+t\bar\eta)
&=
\delta\bigl(Q+t(e_1+e_2)\bigr)\\
&=
t+\frac{t^2}{2}D^2\delta(Q)[e_1,e_1]+o(t^2).
\end{aligned}
\]
Hence
\[
\bar\delta(Q+t\bar\eta)-\delta(Q+t\bar\eta)=o(t^2),
\qquad
\bar\delta(Q+t\bar\eta)\asymp
\delta(Q+t\bar\eta)\asymp t.
\]
By the mean value theorem, for some \(\xi_t\) between
\(\bar\delta(Q+t\bar\eta)\) and \(\delta(Q+t\bar\eta)\),
\[
\begin{aligned}
&\bar\delta(Q+t\bar\eta)^s
-\delta(Q+t\bar\eta)^s\\
&\qquad
=
s\xi_t^{\,s-1}
\bigl(
\bar\delta(Q+t\bar\eta)
-\delta(Q+t\bar\eta)
\bigr).
\end{aligned}
\]
Since \(\xi_t\asymp t\), it follows that
\[
s\xi_t^{\,s-1}
\bigl(
\bar\delta(Q+t\bar\eta)
-\delta(Q+t\bar\eta)
\bigr)
=
O(t^{s-1})o(t^2)
=
o(t^{1+s}),
\]
and therefore
\begin{equation}\label{eq:corner-distance-upper}
\bar\delta(Q+t\bar\eta)^s
-\delta(Q+t\bar\eta)^s
=
o(t^{1+s}).
\end{equation}
Moreover, Proposition~\ref{prop:boundary-quotient} makes \(\psi\) bounded
in the boundary collar, and
\[
\delta(Q+t\bar\eta)^s=O(t^s).
\]
Thus
\[
\begin{aligned}
w_{\bar\lambda}(Q+t\bar\eta)
&=
u\bigl(Q+t(e_1+e_2)\bigr)
-u\bigl(Q+t(-e_1+e_2)\bigr)\\
&=
\bar\delta(Q+t\bar\eta)^s
\psi\bigl(Q+t(e_1+e_2)\bigr)\\
&\quad-
\delta(Q+t\bar\eta)^s
\psi\bigl(Q+t(-e_1+e_2)\bigr)\\
&=
\bigl(
\bar\delta(Q+t\bar\eta)^s
-\delta(Q+t\bar\eta)^s
\bigr)
\psi\bigl(Q+t(e_1+e_2)\bigr)\\
&\quad+
\delta(Q+t\bar\eta)^s
\Bigl(
\psi\bigl(Q+t(e_1+e_2)\bigr)
-\psi\bigl(Q+t(-e_1+e_2)\bigr)
\Bigr).
\end{aligned}
\]
Together with \eqref{eq:corner-quotient-upper} and
\eqref{eq:corner-distance-upper}, this gives
\begin{equation}\label{eq:corner-upper-bound}
w_{\bar\lambda}(Q+t\bar\eta)
=
o(t^{1+s})
\qquad\text{as }t\to0^+.
\end{equation}
This contradicts \eqref{eq:corner-lower-bound}.
\end{proof}

Having excluded both possible critical-contact configurations, we can now complete the moving-plane argument and prove the main theorem.

\begin{proof}[Proof of Theorem~\ref{thm:main}]
Suppose, for contradiction, that \(\bar\lambda<0\). At the first critical
position either internal tangency or orthogonality must occur. Lemma~\ref{lem:no-tangency} excludes the first case and Lemma~\ref{lem:no-corner}
excludes the second. When \(N=1\), only the tangency alternative is present,
and Lemma~\ref{lem:no-tangency} already gives the contradiction. Hence
\(\bar\lambda=0\).

The argument was performed in the direction \(e_1\). It gives that
the critical plane in that direction is \(T_0=\{x_1=0\}\), and hence
\[
u(x)\le u(x^0)
\quad\text{for }x\in\Omega\cap\{x_1<0\},
\]
where \(x^0=(-x_1,x_2,\dots,x_N)\). Repeating the argument from the opposite
direction gives the reverse inequality. Thus both \(u\) and \(\Omega\) are
symmetric with respect to \(\{x_1=0\}\). The moving-plane inequalities and
Lemma~\ref{lem:strong-maximum} also show that \(u\) is strictly
increasing in the \(x_1\)-direction in \(\Omega\cap\{x_1<0\}\).
The same argument applies in every direction \(\nu\in\mathbb S^{N-1}\).
Therefore \(\Omega\) is symmetric with respect to every hyperplane passing
through \(0\). 

As in \cite{FallJarohs}, no connectedness assumption on \(\Omega\) is
needed. We give here a direct consequence of the moving-plane inclusions
which shows that connectedness follows a posteriori.
Fix a unit vector \(\nu\), write \(x=y+t\nu\) with
\(y\in\nu^\perp\), and set
\[
I_y:=\{t\in\mathbb R:y+t\nu\in\Omega\}.
\]
The conclusion \(\bar\lambda=0\) in the direction \(\nu\) gives the geometric
inclusion obtained during the moving-plane procedure for every plane
\(\{x\cdot\nu=\mu\}\) with \(\mu<0\). Applying the same argument in the
opposite direction \(-\nu\) gives the corresponding inclusion for every
\(\mu>0\). Moreover symmetry at the critical plane gives \(I_y=-I_y\).
If \(0\le t_1<t_2\) and \(t_2\in I_y\), choose
\(\mu=(t_1+t_2)/2>0\). The right-cap inclusion reflects the point
\(y+t_2\nu\) into \(y+t_1\nu\), and therefore \(t_1\in I_y\). By symmetry
the same holds on the negative side. Thus every nonempty line section \(I_y\)
is an interval. Since \(\nu\) is arbitrary, the intersection of \(\Omega\) with every line is an interval. Hence \(\Omega\) is convex and, in particular,
connected. Moreover, symmetry with respect to every hyperplane through the
origin makes \(\Omega\) invariant under all orthogonal transformations.
Since \(\Omega\) is bounded, open, convex, and contains the origin, it follows
that
\[
\Omega=B_R(0)
\]
for some \(R>0\).
\end{proof}

\section{The torsion case}
\label{sec:torsion}

In the torsion case \(f\equiv1\), subtracting the explicit torsion function leaves a nonnegative \(s\)-harmonic function in the punctured ball. The next lemma identifies the singular part as a multiple of the Green function.

\begin{lemma}[Identification of the singular Green part]
\label{lem:green-singular-part}
Let \(N>2s\), \(R>0\), and let
\(v\in L^1_s(\mathbb R^N)\cap
W^{s,2}_{\loc}(\mathbb R^N\setminus\{0\})\cap
C(\mathbb R^N\setminus\{0\})\) be nonnegative in \(\mathbb R^N\), vanish
in \(\mathbb R^N\setminus B_R\), and satisfy \((-\Delta)^s v=0\) weakly in
\(B_R\setminus\{0\}\). Assume moreover that the singularity at the origin is
non-removable in the sense of Definition~\ref{def:removable-singularity},
with \(f\equiv0\). Then there exists \(k>0\) such that
\[
v(x)=kG_R(x,0)
\qquad \forall x\in B_R\setminus\{0\}.
\]
\end{lemma}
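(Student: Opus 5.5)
The plan has three steps. First, extend the equation for \(v\) to a distributional equation on all of \(B_R\) with a point mass at the origin. Second, subtract the matching multiple of the Green function and show the remainder is \(s\)-harmonic and vanishes by a maximum principle. Third, use non-removability to force \(k>0\).

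\emph{Step 1: the distributional equation on \(B_R\).} Since \(v\ge0\), \(v\in L^1_s(\mathbb R^N)\), and \((-\Delta)^s v=0\) in \(B_R\setminus\{0\}\), the distributional formulation agrees with the weak one away from the pole, by \eqref{eq:energy-distribution-pairing}. Hence the distribution \((-\Delta)^s v\in\mathcal D'(B_R)\) is supported in \(\{0\}\). I would invoke the fractional B\^ocher theorem of Li, Wu and Xu \cite[Theorem 4]{LiWuXu}: a nonnegative \(L^1_s\) function that is \(s\)-harmonic in a punctured ball satisfies
\[
(-\Delta)^s v=k\,\delta_0\quad\text{in }\mathcal D'(B_R)
\]
for some \(k\ge0\). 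If one wanted to avoid the citation, the point-support structure gives \((-\Delta)^s v=\sum_{|\alpha|\le m}c_\alpha\partial^\alpha\delta_0\). Positivity of \(v\), tested against suitable rescaled bumps, together with \(v\in L^1_{\loc}\) rules out the derivative terms and forces \(c_0\ge0\). I would rely on the citation, since this is exactly its content.

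\emph{Step 2: identification with the Green function.} Set \(z:=v-kG_R(\cdot,0)\). By the Boggio representation, \(G_R(\cdot,0)\in L^1_s(\mathbb R^N)\), it is continuous away from \(0\), it vanishes outside \(B_R\), and \((-\Delta)^sG_R(\cdot,0)=\delta_0\) in \(\mathcal D'(B_R)\). Therefore \((-\Delta)^s z=0\) in \(\mathcal D'(B_R)\), with \(z=0\) outside \(B_R\). By \cite[Main Theorem, item (2)]{CarbottiCitoLaMannaPallara}, \(z\) is smooth in \(B_R\), so the equation holds pointwise. To conclude \(z\equiv0\) I would repeat the maximum principle argument at the end of the proof of Proposition~\ref{prop:boundary-quotient}. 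That argument needs \(z\) to be continuous on \(\overline{B_R}\) with zero boundary values, and this is the main obstacle: \(v\) is only assumed continuous on \(\mathbb R^N\setminus\{0\}\), and \(G_R(\cdot,0)\) is continuous up to \(\partial B_R\). So \(z\) is continuous on \(\mathbb R^N\setminus\{0\}\) and vanishes outside \(B_R\). Near \(0\), \(z\) is smooth by the interior regularity, because the singular parts cancel in \(\mathcal D'\). Hence \(z\in C(\mathbb R^N)\) and is bounded, so a positive interior maximum of \(z\) (or of \(-z\)) yields the same contradiction \(0=(-\Delta)^s z(x_0)>0\). This gives \(z\equiv0\), that is, \(v=kG_R(\cdot,0)\) in \(B_R\setminus\{0\}\).

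\emph{Step 3: positivity of \(k\).} If \(k=0\), then \(v=z\) extends continuously, here to zero, and it is \(s\)-harmonic across the origin in the weak sense. Its \(W^{s,2}_{\loc}\) regularity near the origin follows from the smoothness of \(z\), and the equation on all test functions in \(C_c^\infty(B_R)\) follows from the distributional identity together with \eqref{eq:energy-distribution-pairing}. The singularity would then be removable in the sense of Definition~\ref{def:removable-singularity} with \(f\equiv0\), contradicting the hypothesis. Hence \(k>0\).
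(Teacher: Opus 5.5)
Your proof is correct and has the same skeleton as the paper's. The B\^ocher theorem \cite[Theorem 4]{LiWuXu} gives \((-\Delta)^s v=k\delta_0\) in \(\mathcal D'(B_R)\). One then subtracts \(kG_R(\cdot,0)\), applies very-weak interior regularity, and concludes by uniqueness together with non-removability.

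\textbf{Uniqueness step.} The paper first invokes Proposition~\ref{prop:boundary-quotient} to get \(|v|\le C\delta_R^s\) near \(\partial B_R\). It combines this with the Boggio decay of \(G_R(\cdot,0)\), extends \(z\) continuously by zero, and cites \cite[Theorem 2.10]{Bucur}. You observe, correctly, that continuity of \(z\) across \(\partial B_R\) is already contained in the hypotheses: \(v\in C(\mathbb R^N\setminus\{0\})\) and \(v=0\) on \(\mathbb R^N\setminus B_R\supset\partial B_R\). Near the origin, the smooth representative of \(z\) agrees with \(v-kG_R(\cdot,0)\) on the punctured ball, since both are continuous there and equal almost everywhere. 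So the \(\delta_R^s\) rate is more than this lemma needs. The elementary pointwise maximum principle from the end of the proof of Proposition~\ref{prop:boundary-quotient} then suffices.

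\textbf{Positivity of \(k\).} The paper argues before the identification, applying the regularity theorem to \(v\) itself when \(k=0\). You argue afterwards, where \(k=0\) forces \(v\equiv0\) and removability with \(f\equiv0\) is immediate. Your extra justification in Step 3 ($W^{s,2}_{\loc}$ regularity, extension of the test class) is therefore not needed.

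Net effect: your version is slightly leaner and does not depend on the boundary regularity theory.
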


\begin{proof}
The punctured weak equation implies
\[
(-\Delta)^s v=0
\quad\text{in }\mathcal D'(B_R\setminus\{0\}).
\]
Our space \(L_s^1(\mathbb R^N)\) is exactly the space denoted by
\(L_{2s}\) in Li--Wu--Xu \cite{LiWuXu}. After rescaling \(B_R\) onto the
unit ball, \(v\) is nonnegative, belongs to \(L_{2s}\), and satisfies the
punctured distributional equation with \(c\equiv0\) and \(f\equiv0\);
the standing assumption \(N>2s\) gives \(0<s<N/2\), the dimensional
range required there. Thus \cite[Theorem 4]{LiWuXu} yields a constant
\(k\geq0\) such that
\begin{equation}\label{eq:bocher-dirac}
(-\Delta)^s v=k\delta_0
\quad\text{in }\mathcal D'(B_R).
\end{equation}

We claim that \(k>0\). If \(k=0\), then \(v\) is distributionally
\(s\)-harmonic across the origin. The local regularity theorem for very weak
\(s\)-harmonic functions
\cite[Main Theorem, item (2)]{CarbottiCitoLaMannaPallara} gives a smooth
representative of \(v\) in a neighborhood of \(0\). Assigning to \(v(0)\)
the value of this representative and using the assumed local energy
regularity away from the origin, we obtain
\[
v\in W^{s,2}_{\loc}(\mathbb R^N)
\cap L_s^1(\mathbb R^N)
\cap C(\mathbb R^N).
\]
Moreover, the distributional identity \(( -\Delta)^s v=0\) in \(B_R\),
together with the integration-by-parts identity recalled in Section~\ref{sec:preliminaries}, gives
\[
\mathcal E(v,\phi)=0
\qquad\forall\phi\in C_c^\infty(B_R).
\]
Thus the origin would be removable in the sense of Definition~\ref{def:removable-singularity}, with \(f\equiv0\), a contradiction. Hence
\(k>0\).

Set
\[
z:=v-kG_R(\cdot,0).
\]
By \eqref{eq:bocher-dirac} and the normalization
\(( -\Delta)^sG_R(\cdot,0)=\delta_0\) in \(\mathcal D'(B_R)\),
\[
(-\Delta)^s z=0\quad\text{in }\mathcal D'(B_R),
\qquad
z=0\quad\text{in }\mathbb R^N\setminus B_R.
\]
The singular behavior \(G_R(x,0)=O(|x|^{2s-N})\) as \(x\to0\), together
with \(N>2s\), shows that \(G_R(\cdot,0)\in L_s^1(\mathbb R^N)\), and hence
\(z\in L_s^1(\mathbb R^N)\). The same very-weak regularity theorem therefore
gives
\[
z\in C^\infty_{\loc}(B_R).
\]

It remains to control the boundary behavior. Set
\(\delta_R(x):=R-|x|\). Proposition~\ref{prop:boundary-quotient}, applied
to \(v\) with right-hand side zero, yields
\[
|v(x)|\leq C\delta_R(x)^s
\qquad\text{for }x\text{ near }\partial B_R.
\]
On the other hand, the Boggio formula
\begin{equation}\label{eq:boggio}
G_R(x,0)
=
c_{N,s}|x|^{2s-N}
\int_0^{(R^2-|x|^2)/|x|^2}
\frac{t^{s-1}}{(1+t)^{N/2}}\,\dd t
\end{equation}
gives the same estimate for \(G_R(\cdot,0)\), because
\[
\frac{R^2-|x|^2}{|x|^2}=O(\delta_R(x))
\qquad\text{as }x\to\partial B_R.
\]
Consequently,
\[
|z(x)|\leq C\delta_R(x)^s
\qquad\text{near }\partial B_R,
\]
so \(z\) extends continuously by zero to all of \(\mathbb R^N\). Since
\(z\) is smooth in the ball, distributionally \(s\)-harmonic there, and
belongs to \(L_s^1(\mathbb R^N)\), it is also pointwise \(s\)-harmonic in
\(B_R\). Thus all the hypotheses of \cite[Theorem 2.10]{Bucur} are
satisfied, with zero exterior datum, and the uniqueness statement there
gives \(z\equiv0\). Therefore
\[
v(x)=kG_R(x,0)
\qquad\forall x\in B_R\setminus\{0\}.
\]
\end{proof}

We now turn to the second main result, which gives the explicit classification in the torsion case.

\begin{proof}[Proof of Corollary~\ref{cor:torsion-classification}]
Corollary~\ref{cor:automatic-TC}, item \textup{(ii)}, gives
\(\Omega=B_R(0)\). Let \(\tau_R\) be the torsion function of the ball and set
\[
v:=u-\tau_R.
\]
Then \(v\) is \(s\)-harmonic in \(B_R\setminus\{0\}\), vanishes outside the
ball, and belongs to the same local energy and tail classes as \(u\).  Its
singularity is non-removable: otherwise a removable \(s\)-harmonic extension
of \(v\), added to the regular torsion function \(\tau_R\), would give a
removable extension of \(u\) solving \((-\Delta)^s u=1\) in the whole ball.

We claim that \(v\geq0\).  On \(\{v<0\}\),
\[
v^-=\tau_R-u\leq\tau_R,
\]
so \(v^-\) is bounded and supported in \(\overline{B_R}\).  Let
\(\psi_\varepsilon=1-\eta_{0,\varepsilon}\) be the capacity cutoff from
\eqref{eq:capacity-cutoffs}, and set
\[
r_\varepsilon:=v^-\psi_\varepsilon.
\]
For each fixed \(\varepsilon>0\), the cutoff removes the only possible
singularity. More precisely, the Lipschitz truncation \(v^-\) belongs locally
to \(W^{s,2}\) away from the origin, and \(r_\varepsilon\) has bounded
support. The same localization argument used in
\eqref{eq:cutoff-reflection-Hs} therefore gives
\[
r_\varepsilon\in H^s(\mathbb R^N).
\]
Since it vanishes a.e. in \(\mathbb R^N\setminus B_R\), this is exactly
\(r_\varepsilon\in H_0^s(B_R)\). Multiplication by the smooth cutoff once
more gives \(v^-\psi_\varepsilon^2=r_\varepsilon\psi_\varepsilon\in
H_0^s(B_R)\). This function vanishes in a neighborhood of the origin; the density argument used in the proof of Lemma~\ref{lem:small-domain-comparison} therefore makes it an admissible test in
the punctured weak equation.  With \(q:=v^-\), the pointwise negative-part inequality underlying
\eqref{eq:antisymmetric-negative-part-energy} applies directly here and gives
\[
0=\mathcal E(v,q\psi_\varepsilon^2)
\leq-\mathcal E(q,q\psi_\varepsilon^2).
\]
Combining this inequality with the identity
\eqref{eq:weighted-cutoff-energy} yields
\[
\mathcal E(r_\varepsilon,r_\varepsilon)
\leq
C\|v^-\|_\infty^2
[\eta_{0,\varepsilon}]_{H^s(\mathbb R^N)}^2.
\]
The right-hand side tends to zero since \(N>2s\).  Since
\(r_\varepsilon\in H_0^s(B_R)\),
\[
\lambda_1(B_R)\|r_\varepsilon\|_{L^2(B_R)}^2
\leq
\mathcal E(r_\varepsilon,r_\varepsilon)
\longrightarrow0.
\]
On the other hand,
\(r_\varepsilon\to v^-\) in \(L^2(B_R)\) by dominated convergence.
Consequently \(v^-\equiv0\), and hence \(v\geq0\).  Lemma~\ref{lem:green-singular-part} now gives
\[
u(x)=\tau_R(x)+kG_R(x,0)
\qquad\text{with }k>0.
\]
It remains to identify \(k\). Writing \(\delta(x)=R-|x|\), the boundary
condition is
\[
-c=\lim_{x\to\partial B_R}\frac{u(x)}{\delta(x)^s}.
\]
The explicit torsion function satisfies
\[
\tau_R(x)=\gamma_{N,s}(R^2-|x|^2)^s_+,
\qquad
\lim_{x\to\partial B_R}\frac{\tau_R(x)}{\delta(x)^s}
=\gamma_{N,s}(2R)^s.
\]
For \(r=|x|\), set \(A_r=(R^2-r^2)/r^2\). Then
\[
\frac{A_r}{R-r}\longrightarrow\frac2R.
\]
Moreover, since \(A_r\to0\) and
\((1+t)^{-N/2}=1+O(t)\) as \(t\to0\),
\[
\int_0^{A_r}\frac{t^{s-1}}{(1+t)^{N/2}}\,\dd t
=\int_0^{A_r}t^{s-1}\bigl(1+O(t)\bigr)\,\dd t
=\frac{A_r^s}{s}\bigl(1+o(1)\bigr).
\]
The Boggio formula \eqref{eq:boggio} therefore gives
\[
\lim_{x\to\partial B_R}
\frac{G_R(x,0)}{\delta(x)^s}
=
\frac{c_{N,s}}{s}\,2^sR^{s-N}.
\]
Substitution in the boundary condition yields
\[
k=
\frac{sR^{N-s}}{c_{N,s}2^s}
\left(-c-\gamma_{N,s}(2R)^s\right),
\]
as claimed.
Since the singularity is non-removable, \(k>0\), and therefore
\[
-c>\gamma_{N,s}(2R)^s.
\]
If the family is enlarged to allow \(k=0\), equality is equivalent to
\(u=\tau_R\), and the origin is removable. Finally, the torsion term is
bounded near the origin, while in \eqref{eq:boggio} the upper integration
limit tends to infinity. Monotone convergence and \(N>2s\) give
\[
\int_0^{+\infty}\frac{t^{s-1}}{(1+t)^{N/2}}\,\dd t
=
B(s,N/2-s)
=\frac{\Gamma(s)\Gamma(N/2-s)}{\Gamma(N/2)}.
\]
Multiplying the decomposition by \(|x|^{N-2s}\) yields the asserted pole
limit and completes the proof.
\end{proof}

\appendix
\section{A solution for which \texorpdfstring{$(\mathrm{TC})$}{(TC)} holds but
\texorpdfstring{$u/\delta^s\notin C^1$}{u/delta s is not C1}}
\label{app:strict-TC-example}

We give the verification of the example announced in the Introduction. Fix
\(N=2\) and \(s\in(0,1/2)\). The range \(s>1/2\) is excluded because
Lemma~\ref{lem:automatic-boundary-regularity} gives
\(u/\delta^s\in C^{1,\varepsilon}\) there, so this separation from \(C^1\)-regularity cannot occur. Set
\[
\rho(x):=1-|x|^2.
\]
Since the pole is strictly interior, the Dirac mass at the origin is an
admissible measure datum. By the theory for measure data in
\cite{GomezCastroVazquezMeasure}, there exists a unique nonnegative very weak
solution \(u \in L^{1}_{s}(\R^{2})\) of
\begin{equation}\label{eq:appendix-resolvent}
\bigl((-\Delta)^s+I\bigr)u=\delta_0
\quad\text{in }B_1,
\qquad
u=0
\quad\text{in }\mathbb R^2\setminus B_1.
\end{equation}
Let \(G_1(\cdot,0)\) be the Green function of \((-\Delta)^s\) in \(B_1\).
Since
\[
\bigl((-\Delta)^s+I\bigr)G_1(\cdot,0)
=
\delta_0+G_1(\cdot,0)
\geq\delta_0,
\]
the comparison estimate in \cite[Theorem 1.1]{ChenVeron} gives
\[
0\leq u\leq G_1(\cdot,0).
\]
Thus Boggio's formula yields
\[
u(x)=O(\delta(x)^s)
\qquad\text{as }x\to\partial B_1.
\]
Away from the origin,
\[
(-\Delta)^s u=-u.
\]
Interior regularity and the strong minimum principle give
\(u>0\) in \(B_1\setminus\{0\}\), while uniqueness and rotational
invariance imply that \(u\) is radial. Moreover, the singularity cannot be
removable, since otherwise \((-\Delta)^su+u=0\) would hold distributionally
across the origin, contradicting \eqref{eq:appendix-resolvent}.

Apply now the radial cutoff construction used in the proof of
Proposition~\ref{prop:boundary-quotient}. Since \(u\) is smooth on the
transition annulus, we obtain a radial function \(v\), equal to \(u\) near
\(\partial B_1\), such that
\begin{equation}\label{eq:appendix-cutoff}
v\in H^s_0(B_1)\cap L^\infty(\mathbb R^2),
\qquad
(-\Delta)^s v=-v+H
\quad\text{in }B_1,
\qquad
H\in C^\infty(\overline{B_1}),
\end{equation}
with \(H\) radial. By \cite[Proposition 1.1]{RosOtonSerra},
\[
v\in C^s(\mathbb R^2).
\]
Choose \(\varepsilon\in(0,s)\) so that, with
\[
\beta_1:=2s-\varepsilon,
\qquad
\beta_2:=3s-\varepsilon,
\]
the noninteger conditions in \cite[Theorem 1.4]{AbatangeloRosOton} are
satisfied for both exponents. The function \(\rho\) is a regularized distance
in the sense used there. Since
\[
-v+H\in C^s(\overline{B_1})
\subset C^{s-\varepsilon}(\overline{B_1}),
\]
Theorem~1.4 of \cite{AbatangeloRosOton}, with \(\beta=\beta_1\), gives
\begin{equation}\label{eq:appendix-first-bootstrap}
\frac{v}{\rho^s}
\in C^{\beta_1}(\overline{B_1}).
\end{equation}
By radiality its boundary trace is a constant, say
\(\kappa_s:=\left.(v/\rho^s)\right|_{\partial B_1}\). Since \(v=u\) near the
boundary, the fractional Hopf lemma
\cite[Proposition 3.3 and Remark 3.5]{FallJarohs}, applied to \(u\) in an
interior tangent ball disjoint from the pole and chosen so small that
\(1<\lambda_1(B)\), gives \(\kappa_s>0\).

Since $v$ is radial and the fractional Laplacian is rotation invariant,
$H=(-\Delta)^sv+v$ is radial as well; hence
$h_0:=H|_{\partial B_1}$ is constant. We first remove this constant boundary
part. Dyda's formula \cite{Dyda} gives
\[
(-\Delta)^s\rho_+^s=C_s,
\qquad C_s=2^{2s}\Gamma(1+s)^2>0.
\]
Set $v^{(1)}:=v-(h_0/C_s)\rho_+^s$. Then
\[
(-\Delta)^s v^{(1)}
=-v+(H-h_0)
=-\kappa_s\rho^s
-\rho^s\left(\frac{v}{\rho^s}-\kappa_s\right)+(H-h_0).
\]
After cancelling the boundary value \(h_0\) of \(H\), the remaining term
\(H-h_0\) is \(O(\rho)\); hence \(-\kappa_s\rho^s\), which comes from
\(-v\), is the leading boundary term. Since $\beta_1=2s-\varepsilon>s$, this term is
not in $C^{\beta_1}(\overline{B_1})$ and prevents the second bootstrap. We next construct a second corrector whose fractional Laplacian cancels precisely
this term. For the power $3s$, Dyda's formula \cite{Dyda} gives
\[
(-\Delta)^s\rho_+^{3s}
=D_s\,{}_2F_1(1+s,-2s;1;|x|^2),
\qquad
D_s=2^{2s}\frac{\Gamma(1+s)\Gamma(1+3s)}{\Gamma(1+2s)}.
\]
The connection formula for ${}_2F_1$ near $z=1$, see
\cite[Corollary~2.3.3]{AndrewsAskeyRoy}, specialized to these parameters, reads
\[
{}_2F_1(1+s,-2s;1;z)
=c_{0,s}+\frac{\Gamma(-s)}{\Gamma(1+s)\Gamma(-2s)}(1-z)^s+r_s(z),
\]
where $c_{0,s}\in\mathbb R$, $r_s\in C^1([0,1])$, and $r_s(1)=0$.
Since $\rho=1-|x|^2$, we obtain
\begin{equation}\label{eq:appendix-three-s}
(-\Delta)^s\rho_+^{3s}=K_{0,s}+K_{1,s}\rho^s+R_s,
\end{equation}
where $K_{0,s}\in\mathbb R$, $R_s\in C^1(\overline{B_1})$,
$R_s=0$ on $\partial B_1$, and
\[
K_{1,s}
=D_s\frac{\Gamma(-s)}{\Gamma(1+s)\Gamma(-2s)}
=2^{2s+1}\frac{\Gamma(1-s)\Gamma(1+3s)}
{\Gamma(1-2s)\Gamma(1+2s)}>0.
\]
Set
\[
\zeta_s:=\frac{\kappa_s}{K_{1,s}}
\left(\rho_+^{3s}-\frac{K_{0,s}}{C_s}\rho_+^s\right),
\qquad
(-\Delta)^s\zeta_s=\kappa_s\rho^s+\frac{\kappa_s}{K_{1,s}}R_s.
\]
The $\rho^s$ term cancels the obstruction above, while the added
$\rho_+^s$ term cancels the constant $K_{0,s}$ produced by
$(-\Delta)^s\rho_+^{3s}$. Finally, with $v^{(2)} := v^{(1)}+\zeta_s =: \widetilde v$,
\[
(-\Delta)^s\widetilde v
=-\rho^s\left(\frac{v}{\rho^s}-\kappa_s\right)
+(H-h_0)+\frac{\kappa_s}{K_{1,s}}R_s.
\]
The first term belongs to \(C^{\beta_1}(\overline{B_1})\): indeed,
\(v/\rho^s-\kappa_s\in C^{\beta_1}\) and vanishes on the boundary, so
multiplication by \(\rho^s\) preserves this H\"older exponent. Since
\(H-h_0\) is smooth and vanishes on \(\partial B_1\), while
\(R_s\in C^1(\overline{B_1})\), it follows that
\[
(-\Delta)^s\widetilde v
\in C^{\beta_1}(\overline{B_1}).
\]
A second application of \cite[Theorem 1.4]{AbatangeloRosOton}, now with
\(\beta=\beta_2\), gives
\begin{equation}\label{eq:appendix-second-bootstrap}
\frac{\widetilde v}{\rho^s}
\in C^{\beta_2}(\overline{B_1}).
\end{equation}
Let \(Q\in\partial B_1\) and \(x_t=(1-t)Q\). Then, for \(t\) small
\[
\rho(x_t)=t(2-t)\asymp t.
\]
Since \(\beta_2>2s\) and \(2s<1\),
\eqref{eq:appendix-second-bootstrap} yields
\[
\frac{\widetilde v(x_t)}{\rho(x_t)^s}
=
\left.\frac{\widetilde v}{\rho^s}\right|_{\partial B_1}
+o(\rho(x_t)^{2s}).
\]
From the definition of \(\widetilde v\),
\[
\frac{v}{\rho^s}
=
\frac{h_0}{C_s}
-\frac{\kappa_s}{K_{1,s}}
\left(\rho^{2s}-\frac{K_{0,s}}{C_s}\right)
+\frac{\widetilde v}{\rho^s}.
\]
Taking the boundary limit and using the definition of \(\kappa_s\), the constant
terms combine to \(\kappa_s\). Therefore
\begin{equation}\label{eq:appendix-rho-expansion}
\frac{v}{\rho^s}
=
\kappa_s-\frac{\kappa_s}{K_{1,s}}\rho^{2s}+o(\rho^{2s}).
\end{equation}
Since
\[
\rho=(1+|x|)\delta
\]
and \(v=u\) near \(\partial B_1\), while \(2s<1\), we obtain
\begin{equation}\label{eq:appendix-delta-expansion}
\frac{u}{\delta^s}
=
A_s-B_s\delta^{2s}+o(\delta^{2s}),
\qquad
A_s:=2^s\kappa_s>0,
\quad
B_s:=\frac{2^{3s}\kappa_s}{K_{1,s}}>0.
\end{equation}
In particular, along \(x_t=(1-t)Q\), where
\(\delta(x_t)=t\),
\[
\frac{
\dfrac{u(x_t)}{\delta(x_t)^s}-A_s
}{t}
=
-B_s t^{2s-1}+o(t^{2s-1}),
\]
and hence \(u/\delta^s\notin C^1\) up to the boundary.

On the other hand, its boundary trace is \(A_s>0\), and therefore
\[
(\partial_\eta)^s u=-A_s<0.
\]
Finally, if \(Q\in\partial B_1\), \(\tau\perp Q\), and \(\nu_Q=-Q\), then
\[
\bigl|Q+t(\nu_Q+\tau)\bigr|^2
=
(1-t)^2+t^2
=
\bigl|Q+t(\nu_Q-\tau)\bigr|^2.
\]
Since both \(u\) and \(\delta\) are radial, the two values appearing in
\eqref{eq:TC} coincide for every sufficiently small \(t>0\). Thus
\eqref{eq:TC} holds with exact cancellation, whereas
\(u/\delta^s\notin C^1\).

\end{document}